\newtheorem{prop}{Proposition}[section]
\newtheorem{thm}[prop]{Theorem}
\newtheorem{cor}[prop]{Corollary}
\newtheorem{conj}[prop]{Conjecture}
\newtheorem{lem}[prop]{Lemma}
\theoremstyle{definition}
\newtheorem{que}[prop]{Question}
\newtheorem{defn}[prop]{Definition}
\newtheorem{expl}[prop]{Example}
\newtheorem{rem}[prop]{\it Remark}
\numberwithin{equation}{section}
\newcommand{\bP}{\mathbb{P}}
\newcommand{\bC}{\mathbb{C}}
\newcommand{\bA}{\mathbb{A}}
\newcommand{\bQ}{\mathbb{Q}}
\newcommand{\bZ}{\mathbb{Z}}
\newcommand{\cX}{\mathcal{X}}
\newcommand{\cY}{\mathcal{Y}}
\newcommand{\cO}{\mathcal{O}}
\newcommand{\cL}{\mathcal{L}}
\newcommand{\cI}{\mathcal{I}}
\newcommand{\cF}{\mathcal{F}}
\newcommand{\cG}{\mathcal{G}}
\newcommand{\cJ}{\mathcal{J}}
\newcommand{\cN}{\mathcal{N}}
\newcommand{\cT}{\mathcal{T}}
\newcommand{\fa}{\mathfrak{a}}
\newcommand{\fb}{\mathfrak{b}}
\newcommand{\fc}{\mathfrak{c}}
\newcommand{\fm}{\mathfrak{m}}
\newcommand{\Spec}{\mathrm{Spec}}
\newcommand{\Supp}{\mathrm{Supp}}
\newcommand{\lct}{\mathrm{lct}}
\newcommand{\mult}{\mathrm{mult}}
\newcommand{\Ext}{\mathrm{Ext}}
\newcommand{\rom}[1]{\lowercase\expandafter{\romannumeral #1\relax}}
\newcommand{\vol}{\mathrm{vol}}
\newcommand{\hvol}{\widehat{\vol}}
\newcommand{\Val}{\mathrm{Val}}
\newcommand{\ord}{\mathrm{ord}}
\newcommand{\Sym}{\mathrm{Sym}}
\newcommand{\Rep}{\mathrm{Rep}}
\newcommand{\id}{\mathrm{id}}
\newcommand{\Jac}{\mathrm{Jac}}
\newcommand{\Tr}{\mathrm{Tr}}
\newcommand{\edim}{\mathrm{edim}}
\newcommand{\Bl}{\mathrm{Bl}}
\newcommand{\pr}{\mathrm{pr}}
\newcommand{\Ex}{\mathrm{Ex}}
\newcommand{\Link}{\mathrm{Link}}
\newcommand{\Aut}{\mathrm{Aut}}
\newcommand{\locpione}{\hat{\pi}^{\mathrm{loc}}_1}
\newcommand\numberthis{\addtocounter{equation}{1}\tag{\theequation}}
\begin{document}

\title{K-stability of cubic threefolds}

\abstract{We prove the K-moduli space of cubic threefolds is identical to their GIT moduli. More precisely, the K-(semi,poly)-stability of cubic threefolds coincide to the corresponding GIT stabilities, which could be explicitly calculated. In particular, this implies that all smooth cubic threefolds admit K\"ahler-Einstein metric as well as provides a precise list of singular KE ones. To achieve this, our main new contribution is an estimate in dimension three of the volumes of kawamata log terminal singularities introduced by Chi Li. This is obtained via a detailed study of the classification of three dimensional canonical and terminal singularities, which was established during the study of the explicit three dimensional minimal model program.} 
}
\date{\today}

\author{Yuchen Liu}
\address{Department of Mathematics, Princeton University, Princeton, NJ 08544-1000, USA}
\email{yuchenl@math.princeton.edu}

\author{Chenyang Xu}
\address{Beijing International Center for Mathematical Research, Beijing 100871, China}
\email{cyxu@math.pku.edu.cn}

\maketitle
\tableofcontents

\date{\today}

\maketitle

\section{Introduction}
After the celebrated work of \cite{CDS15} and \cite{Tia15}, we know that a Fano manifold has a K\"ahler-Einstein (KE) metric if and only if it is K-polystable. Then the main question left for the existence of KE metric on a Fano manifold is how to check its K-polystability. A classical strategy first appeared in \cite{Tia92} was using deformation in the parametrizing space, so that from one Fano manifold $X$ known to have KE metric, we can use continuity method to study other Fano manifolds which can deform to $X$. This idea is successfully used to find out all smooth del Pezzo surfaces with a KE metric in \cite{Tia92} and then extended to all (not necessarily smooth) limits of quartic del Pezzo surfaces in \cite{MM93} which also gives an explicit construction of the compact moduli space. Later with a more focus on the stability study,  the work of \cite{MM93} was further extended to limits of all smooth KE surfaces in \cite{OSS16}.

The strategy can be summarized by two steps:  in the first step, we need to give a good control of all the possible local singularities appeared on the limit by bounding their local volume; then in the second step, we show such limits can all be embedded in an explicit ambient space, and this often leads to an explicit characterization by more standard methods, e.g. the geometric invariant theory (GIT). 

After the case of surface is completely settled, it is natural to  apply this strategy to higher dimensional examples. Built on the results in \cite{CDS15, Tia15, Ber16}, in \cite{LWX14} (see also \cite{Oda15, SSY16}) we construct  an algebraic scheme $M$ which is a good quotient moduli space with closed points parametrizing all smoothable K-polystable $\mathbb{Q}$-Fano varieties $X$. However, the construction is essentially theoretical and can not lead to an effective calculation. One sticky point is that although $X$ is often explicitly given, the limits from the continuity method may be embedded in a much larger ambient space, which we do not have a direct control of it. 

\bigskip

On the other hand, there is an indirect way to study the limit. In fact, using a completely algebraic approach and built on the global work of \cite{Fuj15} and the local study of \cite{Li15a}, in \cite{Liu16} an inequality 
$$\hvol(x,X)\cdot \left(\frac{n+1}{n}\right)^n\ge (-K_X)^n$$
between the global volume of an $n$-dimensional K-semistable Fano variety $X$ and the local volume of each singularity $x\in X$ is established. Here we note that following \cite{Li15a}, the local volume $\hvol(x,X)$ of a kawamata log terminal (klt) singularity $x\in X$ is defined as the minimal normalized volume $\hvol_{x,X}(v)$ for all valuations $v$ in ${\rm Val}_{x,X}$. See \cite{Li15b, LL19, LX16, Blu16} for some recent progress on this topic. 

Then when the volume of $X$ is large, by a detailed analysis of volumes of singularities,  we hope that  the local volume bound obtained by \cite{Liu16} is restrictive enough so that we can use it to show that all the limiting objects are only mildly singular. Once this is true, then we will have a chance to proceed as in the surface case by showing that $X$ and its limits are contained in a (not very large) natural ambient space. This way we could obtain the needed explicit description. 

\bigskip

In this note, we carry out this strategy for cubic threefolds and prove the following theorem. 
\begin{thm}\label{t-global}
 If a (possibly singular) cubic hypersurface $X\subset \mathbb{P}^4$ is GIT polystable (resp. semistable), then $X$ is K-polystable (resp. K-semistable).
 
  In particular, if we let $(U^{\rm ss}\subset \mathbb{P}^{34})$ parametrize all GIT-semistable Fano cubic hypersurfaces in $\mathbb{P}^4$, the GIT quotient morphism
$$U^{\rm ss}\to M^{\rm GIT}:=_{\rm defn}U^{\rm ss}/\!/{\rm PGL(5)}.$$ explicitly yields the proper good quotient moduli space  $M$  parametrizing all  K-polystable threefolds which can be smoothable to a cubic threefold.  
\end{thm}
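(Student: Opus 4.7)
The plan is the two-step strategy outlined in the introduction: first use the local--global volume inequality of \cite{Liu16} to control the singularities of any K-semistable limit of cubic threefolds, and then use that control to show every such limit is itself an anticanonically embedded cubic in $\bP^4$, after which K-stability can be compared directly with GIT. Concretely, let $X_0$ be a K-semistable $\bQ$-Fano threefold that is a smoothable degeneration of a cubic threefold, so $(-K_{X_0})^3 = 24$. The inequality of \cite{Liu16} then yields
$$\hvol(x, X_0)\;\geq\;\left(\tfrac{3}{4}\right)^{3}\cdot 24\;=\;\tfrac{81}{8}$$
for every closed point $x\in X_0$. Since a smooth threefold point has $\hvol = 27$, this is a substantial gap, and the entire strategy hinges on exploiting it.

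The main step, and the main obstacle, is a classification: I would show that the only three-dimensional klt singularities with $\hvol \geq 81/8$ are precisely those that already appear on GIT-semistable cubic threefolds, i.e.\ Gorenstein hypersurface singularities in a short explicit list (nodes, a few further low-multiplicity hypersurface singularities, and cones over certain GIT-semistable cubic surfaces). The inputs are (i) the explicit classification of three-dimensional canonical and terminal singularities developed during the explicit three-dimensional MMP, and (ii) the new upper bounds on $\hvol$ for three-dimensional klt singularities advertised in the abstract. I would produce the upper bounds by exhibiting carefully chosen test valuations $v\in\Val_{x,X_0}$: typically weighted blow-ups adapted to a local equation for compound-Du-Val-type points, and ordinary blow-ups of the minimal embedding for less mild singularities. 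For each class outside the allowed list, one obtains an inequality $\hvol(x,X_0)\leq \hvol_{x,X_0}(v) < 81/8$ that rules it out. The combinatorial bookkeeping over the MMP list, more than any single deep estimate, is the difficulty; this is where the authors' new three-dimensional volume estimate does the work.

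Once the singularities of $X_0$ are pinned down to this list, $X_0$ is Gorenstein of index one with $-K_{X_0} = 2H$ for a Cartier divisor $H$ satisfying $H^3 = 3$ and $h^0(X_0,H) = 5$, so $|H|$ embeds $X_0$ as a cubic hypersurface in $\bP^4$; this identifies every closed point of the K-moduli $M$ with a polystable point of $U^{\rm ss}/\!/{\rm PGL}(5) = M^{\rm GIT}$. Both spaces are proper good quotients, and they agree on the dense open locus of smooth cubics (by \cite{LWX14} on the K-side and by classical GIT on the other), so the resulting comparison map $M^{\rm GIT}\to M$ is an isomorphism. Tracing the identification backwards proves the stated implication GIT-(poly/semi)stable $\Rightarrow$ K-(poly/semi)stable, and in particular that $M=M^{\rm GIT}$. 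As noted, the hardest part is the singularity classification; the Gorenstein/embedding step and the final comparison of the two moduli are standard once the admissible list of singularities is in hand.
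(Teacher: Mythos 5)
Your overall architecture matches the paper's: Liu's inequality gives $\hvol(x,X_0)\geq 81/8$, local volume estimates force $X_0$ to be Gorenstein with $-K_{X_0}=2H$ Cartier, Fujita's theory of degree-$3$ del Pezzo varieties embeds $X_0$ as a cubic in $\bP^4$, and one then compares the two proper moduli spaces. But the last step as you state it has a genuine gap. ``Both spaces are proper good quotients and agree on a dense open locus, so the comparison map is an isomorphism'' does not work: first, no comparison map exists yet in either direction --- a morphism $M^{\rm GIT}\to M$ would require exactly the implication you are trying to prove, and a morphism $M\to M^{\rm GIT}$ requires knowing that every K-semistable cubic is GIT \emph{semistable}. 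The paper supplies this via the Paul--Tian CM-line-bundle criterion (\cite{Tia94}, \cite[Corollary 3.5]{OSS16}), which you omit. Second, even granted a birational map, two proper schemes agreeing on a dense open need not be isomorphic. The paper's actual mechanism is: construct $g\colon\mathcal M\to[U^{\rm ss}/{\rm PGL}(5)]$ using Paul--Tian, check injectivity on $\bC$-points from the modular interpretation, deduce surjectivity onto polystable points from \emph{properness of $M$} (a degenerating family of smooth cubics has a K-polystable limit in $M$, which by your earlier steps is a GIT-semistable cubic mapping to the same point of the separated quotient $M^{\rm GIT}$, hence coincides with the given GIT-polystable cubic), and then surjectivity on semistable points. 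Note also that your appeal to ``agreement on the locus of smooth cubics'' is circular: that all smooth cubics are K-semistable is Corollary \ref{c-list}, a consequence of the theorem, not an input; only a Zariski-dense open subset is known K-semistable a priori (from the Fermat cubic and openness).

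A secondary point: your middle step, a full classification of all three-dimensional klt singularities with $\hvol\geq 81/8$ as ``precisely those appearing on GIT-semistable cubics,'' is both more than is needed and almost certainly out of reach. The paper instead proves targeted bounds tied to torsion in the local class group (Theorem \ref{t-local}.2--4): index $r$ forces $\hvol\leq 27/r$, and a non-quotient singularity with torsion in $\mathrm{Pic}(x\in X)$ has $\hvol\leq 9<81/8$. Combined with Schlessinger's rigidity of $\frac12(1,1,1)$ (to exclude the one quotient singularity that survives the volume bound but is not smoothable), an inversion-of-adjunction argument on the smoothing family, and local Grothendieck--Lefschetz, this yields Gorenstein-ness and the Cartier-ness of $L$ without any classification of the admissible singularities. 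You should replace the classification claim by these weaker, provable statements.
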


By the classification in \cite{All03}, we have a concrete description. 
\begin{cor}\label{c-list}
We have the following list which gives all the closed points of $M(=M^{\rm GIT})$ in Theorem \ref{t-global}:
\begin{enumerate}
\item All smooth cubic threefolds are K-stable;
\item  All cubic threefolds only containing isolated $A_k$ singularities $(k\le 4)$ are K-stable;
\item There are two type K-polystable cubic threefolds with non-discrete automorphisms: 
$$F_{\Delta}=x_0x_1x_2+x_3^3+x_4^3 \qquad{\mbox and \ \ }F_{A,B}=Ax_2^3+x_0x_3^2+x_1^2x_4-x_0x_2x_4+Bx_1x_2x_3.$$
In particular, each cubic threefold on the above list admits a KE metric. 
\end{enumerate}
\end{cor}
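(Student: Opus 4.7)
The plan is to combine Theorem \ref{t-global} with Allcock's classification in \cite{All03} of GIT (semi,poly)stable cubic threefolds. Allcock shows that a cubic threefold $X\subset \mathbb{P}^4$ is GIT stable precisely when $X$ is either smooth or has only isolated $A_k$ singularities with $k\le 4$, and that every strictly GIT polystable orbit (polystable but not stable) is represented by $F_{\Delta}$ or by some $F_{A,B}$ in the stated list.

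First, apply Theorem \ref{t-global}: each GIT polystable cubic threefold is K-polystable. For items (1) and (2), GIT stability forces the stabilizer of $X$ in $\mathrm{PGL}(5)$ to be finite, and for cubics in $\mathbb{P}^4$ this stabilizer coincides with $\Aut(X)$ (the Matsumura--Monsky type statement for hypersurfaces of degree $\ge 3$). A K-polystable $\mathbb{Q}$-Fano with finite automorphism group is automatically K-stable, because any non-trivial product test configuration corresponds to a non-trivial one-parameter subgroup of $\Aut(X)$. This gives K-stability in (1) and (2). Item (3) is immediate from Theorem \ref{t-global} applied to Allcock's two strictly polystable families.

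For the existence of K\"ahler--Einstein metrics, the smooth case of (1) follows from Chen--Donaldson--Sun \cite{CDS15} and Tian \cite{Tia15}. For the singular varieties in (2) and (3), the construction of the good moduli space $M$ in \cite{LWX14} parametrizes smoothable K-polystable $\mathbb{Q}$-Fano varieties together with a weak K\"ahler--Einstein metric on each; since every variety on the list is smoothable (it is a degeneration of a smooth cubic threefold inside $\mathbb{P}^{34}$) and K-polystable, it inherits such a metric.

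The main obstacle is entirely packaged inside Theorem \ref{t-global}; once that is granted, the corollary is essentially a dictionary between Allcock's explicit GIT description and the corresponding K-stability notions, supplemented by the automorphism argument upgrading polystability to stability and by the two existence theorems for KE metrics cited above.
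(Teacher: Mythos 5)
Your proposal is correct and follows essentially the same route as the paper, whose own proof is a two-line citation of Allcock's classification together with \cite{CDS15,Tia15}. In fact you are more careful than the paper on two points it glosses over: the upgrade from K-polystability to K-stability in cases (1)--(2) via finiteness of the automorphism group (which is needed, since Theorem \ref{t-global} only addresses polystability and semistability), and the observation that KE existence for the \emph{singular} members of the list requires the smoothable-Fano results of \cite{LWX14}/\cite{Ber16} rather than \cite{CDS15,Tia15} alone.
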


We note that combined Corollary \ref{c-list} with \cite{Che01, CP02}
and \cite[Corollary 1.4]{Fuj16} which say that all smooth quartic threefolds are K-stable, we answer affirmatively the folklore conjecture that all  smooth Fano hypersurfaces have KE metrics for dimension 3. 

\bigskip

As we mentioned, we need a local result which uses the volume to bound the singularities. For instance, we aim to show that all objects parametrized by $M$ are Gorenstein.  The key local result is the following.
\begin{thm}\label{t-local}
Let $x\in X$ be a three dimensional (non-smooth) klt singularity. Then
\begin{enumerate}
\item  $\hvol(x,X)\le 16$ and the equality holds if and only if it is an $A_1$ singularity;
\item If $x\in X$ is a quotient singularity by a finite group $G$ whose action
is free in codimension $1$, then $\hvol(x,X)=27/|G|$;
\item $\hvol(x,X)\le \ \frac {27}{r}$ where $r$ is the maximal order of a torsion element in the class group;
\item If $x\in X$ is not a quotient singularity and there exists a nontrivial torsion class in $\mathrm{Pic}(x\in X)$, then $\hvol(x,X)\leq 9$.
\end{enumerate}
\end{thm}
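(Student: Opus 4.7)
The overall strategy is to reduce to Gorenstein canonical singularities via index-$1$ covers, exploit the explicit classification of three-dimensional terminal and canonical singularities from the MMP (Reid, Mori, Koll\'ar--Shepherd-Barron), and in each case produce a valuation whose normalized volume is at most $16$, with strict inequality away from the $A_1$ model. The key general tool is that a finite Galois cover $\pi\colon (y,Y)\to(x,X)$ of degree $d$ which is \'etale in codimension one satisfies $\hvol(y,Y)=d\cdot\hvol(x,X)$ and preserves klt; combined with $\hvol(0,\bC^n)=n^n$, this is essentially all that is needed to deduce (2)--(4) from (1).

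Granting (1), the remaining parts are formal. For (2): the presentation $\bC^3\to X=\bC^3/G$ is quasi-\'etale of degree $|G|$, so $\hvol(x,X)=27/|G|$. For (3): a torsion class of order $r$ in $\mathrm{Pic}(x\in X)$ produces a quasi-\'etale cyclic cover $(y,Y)\to(x,X)$ of degree $r$, and (1) gives $\hvol(y,Y)\leq 27$, whence $\hvol(x,X)\leq 27/r$. For (4): if $Y$ were smooth then $X=Y/\mu_r$ would be a quotient singularity, contradicting the hypothesis; hence $Y$ is non-smooth klt, (1) yields $\hvol(y,Y)\leq 16$, and $\hvol(x,X)\leq 16/r\leq 8<9$.

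For (1) itself, the plan is as follows. First, use the canonical index-$1$ cover to reduce to the case when $X$ is Gorenstein canonical: once (1) is known in the Gorenstein case, then for $X$ of Gorenstein index $r\geq 2$ one has $\hvol(y,Y)\leq 27$ (with equality only when $Y$ is smooth), hence $\hvol(x,X)=\hvol(y,Y)/r\leq 27/r<16$, so the equality case of (1) forces $X$ to be Gorenstein. In the Gorenstein case $x\in X$ is cDV, in particular a hypersurface in $\bC^4$. For the model $A_1=\{xy+zw=0\}$, identify $X$ as the affine cone over $(\bP^1\times\bP^1,\cO(1,1))$: the canonical toric valuation $v$ (the ordinary blow-up of the vertex) has $A(v)=2$ and $\mathrm{vol}(v)=\mathrm{mult}_0(X)=2$, so $A(v)^3\mathrm{vol}(v)=16$, and the K-semistability of $(\bP^1\times\bP^1,\cO(1,1))$ identifies $v$ as the normalized volume minimizer, giving $\hvol(0,A_1)=16$. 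For every other Gorenstein canonical three-fold singularity---the cDV types $cA_n$ with $n\geq 2$, $cD_n$, $cE_n$ and their non-terminal canonical analogues---exhibit a valuation, typically from the weighted blow-up suggested by the cDV normal form, whose normalized volume is strictly less than $16$.

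The main obstacle is this last step: the list of three-dimensional canonical singularities is long, and in each case one must engineer a specific valuation and carry out the volume estimate, both to secure the bound $\hvol\leq 16$ and to verify strict inequality away from $A_1$. This is the detailed case-by-case analysis the abstract highlights as the principal new contribution, and it rests on the explicit defining equations made available by the three-dimensional MMP.
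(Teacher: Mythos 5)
Your reduction of parts (3) and (4) to part (1) rests on the multiplicativity $\hvol(y,Y)=d\cdot\hvol(x,X)$ for an arbitrary quasi-\'etale Galois cover of degree $d$. In this paper that is precisely Conjecture \ref{c-finite}, which is open; what is actually proved (Theorem \ref{ghvol}) is the \emph{equivariant} identity $\hvol^G(\tilde{x},\tilde{X})=|G|\cdot\hvol(x,X)$, where $\hvol^G$ is the infimum over $G$-invariant valuations only, together with the unconditional formula in the quasi-regular case. The latter legitimately covers your part (2), since the smooth point is quasi-regular; but since $\hvol^G\geq\hvol$, the proven identity only yields $\hvol(x,X)\geq\hvol(\tilde{x},\tilde{X})/|G|$, which is the wrong direction for (3) and (4). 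To get the upper bounds one must exhibit a $G$-\emph{invariant} valuation upstairs with small normalized volume, and the non-equivariant bound from (1) does not supply one. This is exactly why the paper proves Lemma \ref{gorindex} (a $\bZ/r$-fixed point on a terminalization, via Proposition \ref{p-HX} and the holomorphic Lefschetz theorem, gives $\hvol^{\bZ/r}\leq 27$, yielding (3)) and Proposition \ref{p-strong} (a lengthy equivariant run through Reid's classification gives only $\hvol^{\bZ/2}\leq 18$). The constant $9=18/2$ in part (4) is a trace of this: your claimed bound $16/r\leq 8$ is a symptom that you are invoking the unproven conjecture rather than the available equivariant statement.

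The second gap is in part (1) itself: you assert that a Gorenstein canonical threefold singularity is cDV, hence a hypersurface in $\bC^4$ with an explicit normal form. That is true only for Gorenstein \emph{terminal} points; strictly canonical Gorenstein singularities (for instance cones over del Pezzo surfaces) can have arbitrarily large embedding dimension and admit no cDV equation. The hypersurface case is in fact the easy part -- Lemma \ref{hypersurf} gives $\lct(X;\fm_x)\leq n+1-\mult_x X$ and hence $\hvol\leq 2(n-1)^n=16$ in one stroke, with no case division into $cA_n$, $cD_n$, $cE_n$ -- whereas the technical core of the paper is the strictly canonical case, Proposition \ref{l-weak}. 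There the argument proceeds not through defining equations but by extracting a crepant divisor on a maximal crepant model, running an MMP to contract it to a point, and estimating $\hvol(\ord_E)=(-K_E)^2$ (or, for the two hard non-normal types, $\hvol(\ord_l)$ for a general line $l$ in $E$) using Reid's classification of non-normal Gorenstein del Pezzo surfaces. Your outline contains no mechanism for this case, so the main content of the theorem is missing; the $A_1$ computation via the cone over $\bP^1\times\bP^1$ and the strategy of finding strictly smaller valuations for the other cDV types are, by contrast, consistent with what the paper does in its equality analysis.
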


\begin{rem}\label{rem-ss}
While we were preparing this note, the authors of \cite{SS17} informed us by using a more analytic approach, they made a similar investigation on $n$-dimensional del Pezzo manifolds of degree 4, i.e. the smooth intersections of two quadratics in ${\mathbb{P}}^{n+2}$.  More precisely, they obtain the description  of the K-moduli as the GIT quotient for the compactification of degree 4 del Pezzo manifolds.  In particular, this way they give a different proof of the existence of KE metric on degree 4 del Pezzo manifolds which was first established in \cite{AGP06}.

In fact, they  also consider cubic hypersurfaces in \cite{SS17}, and show that the statement as in Theorem \ref{t-global} saying that the GIT stability is the same as K-stability in any arbitrary dimension follows from an estimate of the largest volume of non-smooth points as stated in Conjecture \ref{c-largev} (but for an analytic definition of the local volume). 
So the main technical contribution in the current paper is the various local estimates in Theorem \ref{t-local}. For instance, Theorem \ref{t-local}.1 settles Conjecture \ref{c-largev} in dimension three.
\end{rem}


\bigskip

It was shown in \cite{TX17} that the local fundamental group of a three dimensional algebraic klt singularity is finite (also see \cite{Xu14} for a result on general dimension). Using similar arguments in proving Theorem \ref{t-local}, we give an effective upper bound on the size of the local fundamental group in terms of volumes:

\begin{thm}\label{fundgp}
 If $x\in X$ is a three dimensional algebraic klt singularity, then 
 \[
 |\pi_1(\Link(x\in X))|\cdot\hvol(x,X)< 324.
 \]
\end{thm}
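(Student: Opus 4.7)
The plan is to pull everything back to the (algebraic) universal cover of the link and apply the already-established local volume bound. By \cite{TX17}, $G:=\pi_1(\Link(x\in X))$ is finite; the universal \'etale cover of $X\setminus\{x\}$ extends, via normalization in the function field extension, to a degree $|G|$ quasi-\'etale Galois cover $\pi\colon\tilde X\to X$, with a single point $\tilde x:=\pi^{-1}(x)$.

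The first point is that $\tilde x\in\tilde X$ is again a three-dimensional klt singularity, since klt is preserved under quasi-\'etale covers, and by construction $\tilde X$ has trivial local fundamental group at $\tilde x$. The second point is the multiplicativity of the normalized volume under finite quasi-\'etale covers:
$$
\hvol(\tilde x,\tilde X)\;=\;|G|\cdot\hvol(x,X).
$$
The inequality $\hvol(\tilde x,\tilde X)\le|G|\cdot\hvol(x,X)$ comes from pulling back a minimizing valuation on $X$ to a $G$-invariant valuation on $\tilde X$ (log discrepancies are preserved and the volume multiplies by $|G|$), while the reverse inequality requires a $G$-equivariant (near-)minimizer on $\tilde X$ that can be pushed down to $X$.

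Finally I would bound $\hvol(\tilde x,\tilde X)$ via the dichotomy of whether $\tilde X$ is smooth at $\tilde x$. In the smooth case $\hvol(\tilde x,\tilde X)=3^3=27$; in the non-smooth case $\tilde x\in\tilde X$ is a non-smooth three-dimensional klt singularity, so Theorem~\ref{t-local}(1) gives $\hvol(\tilde x,\tilde X)\le 16$. Combining,
$$
|\pi_1(\Link(x\in X))|\cdot\hvol(x,X)\;=\;\hvol(\tilde x,\tilde X)\;\le\;27\;<\;324.
$$

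The main obstacle is the multiplicativity step: turning the easy pullback inequality into an equality requires producing a $G$-equivariant minimizer on $\tilde X$, either by averaging a minimizing sequence over the $G$-action together with lower semicontinuity of $\hvol$, or by appealing to uniqueness of the minimizing valuation for klt singularities. Once that is in hand the rest of the argument is an immediate appeal to Theorem~\ref{t-local}(1) and the standard value $n^n$ of $\hvol$ at a smooth point; in particular the argument in fact yields the sharper bound $\le 27$, with equality precisely when $x\in X$ is a quotient singularity in the sense of Theorem~\ref{t-local}(2).
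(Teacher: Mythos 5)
Your argument hinges on the multiplicativity $\hvol(\tilde x,\tilde X)=|G|\cdot\hvol(x,X)$ for the quasi-\'etale Galois cover $\pi\colon\tilde X\to X$, and this is precisely Conjecture \ref{c-finite} of the paper, which is stated there as open ("for now we are still lack of a method to prove Conjecture \ref{c-finite}"). What is actually proved (Theorem \ref{ghvol}(1)) is the identity $|G|\cdot\hvol(x,X)=\hvol^G(\tilde x,\tilde X)$, where $\hvol^G$ is the infimum over \emph{$G$-invariant} valuations only; since $\hvol^G\ge\hvol$, this gives you the inequality in the wrong direction for your purposes. The two repairs you suggest do not close the gap: uniqueness of the minimizing valuation is itself conjectural (the paper cites \cite[Conjecture 6.1.2]{Li15a}, and Theorem \ref{ghvol}(3) only handles the quasi-regular case where the minimizer is divisorial), and there is no known averaging procedure on $\Val_{\tilde X,\tilde x}$ that produces a $G$-invariant valuation without increasing the normalized volume. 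The fact that the theorem's constant is $324$ rather than $27$ is exactly the trace of this obstruction.

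The paper's route is to bound $\hvol^G(\tilde x,\tilde X)$ directly, after first passing to a Galois closure of the index-one cover so that $\tilde X$ is Gorenstein. On a $G$-equivariant maximal crepant model one finds a $G$-invariant rationally connected subvariety $W$ of the fiber (Proposition \ref{p-HX}); if $W$ is a point one lands on a cDV (hypersurface) singularity and gets $\hvol^G\le 27$, but if $W$ is a curve or a non-normal exceptional surface there need be no $G$-fixed point at all, and one can only find a point whose $G$-orbit has at most $12$ elements (Lemma \ref{klein}, via the classification of finite subgroups of ${\rm PGL}(2)$). Applying Theorem \ref{ghvol}(2) to the stabilizer then costs a factor of $12$, yielding $12\times 27=324$. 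So your outline is not a shortcut around the paper's case analysis; without Conjecture \ref{c-finite} the clean bound $\le 27$ (and the claimed characterization of equality) is not available, and some equivariant geometry of the exceptional locus is genuinely needed.
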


When a klt singularity appears on the Gromov-Hausdorff limit of
smooth KE Fano manifolds, it is easy to see the volume is at
most $n^n$ (which is the volume of smooth points) by Bishop Comparison 
Theorem. In Appendix \ref{hvolbound}, we show that this in fact holds for all klt
singularities as in the following theorem. This can be viewed 
as a local analogue of the global volume bounds in
\cite[Theorem 1.1]{Fuj15}.

\begin{thm}\label{t-hvolbound}
  Let $x\in X$ be an $n$-dimensional klt singularity.
 Then $\hvol(x,X)\leq n^n$ and the equality holds if and
 only if $x\in X$ is smooth.
\end{thm}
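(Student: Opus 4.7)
The plan is to establish this inequality as a local analogue of Fujita's global volume bound for K-semistable Fano varieties in \cite{Fuj15}, by passing through a degeneration of $x \in X$ to a cone over a K-semistable log Fano pair of dimension $n-1$, and then combining the Fujita bound on the base with an index bound of Kobayashi--Ochiai type.

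First, I would use the variational characterization $\hvol(x,X) = \inf_{v \in \Val_{x,X}} A_X(v)^n \cdot \vol(v)$, so that any valuation on $X$ yields an upper bound. The key choice is a \emph{minimizer} $v^* \in \Val_{x,X}$, whose existence at the level of quasi-monomial valuations is supplied by Blum \cite{Blu16}. The Rees-type degeneration from $\cO_{X,x}$ to its $v^*$-associated graded algebra then produces a flat family with special fiber $X_0 = \Spec \bigoplus_k \fa_k(v^*)/\fa_{k+1}(v^*)$ satisfying $\hvol(0,X_0) = \hvol(x,X)$. Following the local structure theory of minimizers as in \cite{Li15a,Li15b,LL19,LX16}, one identifies $X_0$ with the affine cone $C(V,\Delta;L)$ over a K-semistable log Fano pair $(V,\Delta)$ of dimension $n-1$, with ample $\bQ$-Cartier polarization $L$ and $-(K_V+\Delta) \sim_\bQ r L$ for some rational $r>0$.

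Next I would compute the normalized volume of the cone directly. The natural grading valuation $v_0$ on $X_0$ satisfies $A_{X_0}(v_0) = r$ (from adjunction on the affine-bundle resolution $\mathrm{Tot}_V(L^\vee) \to X_0$) and $\vol(v_0) = L^{n-1}$, hence $\hvol(x,X) = r^n \cdot L^{n-1}$. Applying Fujita's volume bound to the K-semistable pair $(V,\Delta)$ of dimension $n-1$ gives
\[
r^{n-1} L^{n-1} = \bigl(-(K_V+\Delta)\bigr)^{n-1} \le n^{n-1},
\]
with equality forcing $(V,\Delta,L) \cong (\bP^{n-1},0,\cO(1))$. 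The Kobayashi--Ochiai type bound $r \le n$ for log Fano pairs---which in the K-semistable setting can be extracted from the volume bound together with the integrality of the index---then gives
\[
\hvol(x,X) = r \cdot \bigl(r^{n-1}L^{n-1}\bigr) \le n \cdot n^{n-1} = n^n.
\]

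For the equality case, both the Fujita estimate and the index bound must be sharp, forcing $r = n$ and $(V,\Delta,L) = (\bP^{n-1},0,\cO(1))$, so $X_0 \cong \bA^n$. A rigidity argument, using lower semicontinuity of the normalized volume in klt families together with the fact that any klt specialization of $\bA^n$ preserving the marked origin must itself be smooth there, upgrades this to the conclusion that $x \in X$ is smooth. The main obstacle in this plan is the first step: at the time of writing, the full Li--Xu stable degeneration conjecture is not available unconditionally, so care is needed to ensure the cone structure and K-semistability of $(V,\Delta)$ actually hold for the chosen minimizer; one may need to restrict first to the case of Kollár components (where these properties are known) and then bootstrap by approximation, or otherwise leverage the partial results in the literature to supply just what is needed for the cone computation above.
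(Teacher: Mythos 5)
Your plan rests on the stable degeneration picture: pass from a minimizer $v^*$ to its associated graded ring, identify the central fiber with a cone over a K-semistable log Fano pair $(V,\Delta)$, and then invoke Fujita's bound on $(V,\Delta)$. This is exactly the step that was not available at the time (and is not supplied by the references you cite): Blum gives existence of a minimizer, but neither quasi-monomiality, nor finite generation of the associated graded ring, nor K-semistability of the degeneration were known in general. Your proposed repair --- ``restrict to Koll\'ar components and bootstrap by approximation'' --- does not close the gap: by Theorem \ref{eq-model} one can approximate $\hvol(x,X)$ by normalized volumes of Koll\'ar components, but those approximating components are not K-semistable (only a genuine minimizing Koll\'ar component is, by [LX16, Theorem D], and such a minimizer need not exist), so Fujita's bound cannot be applied to them. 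There is a second, independent gap in the Kobayashi--Ochiai step: once $L$ is merely an ample $\bQ$-Cartier Weil divisor on a singular $(V,\Delta)$, the bound $r\le n$ for $-(K_V+\Delta)\sim_{\bQ} rL$ is not a standard fact and is not ``extracted from the volume bound together with integrality of the index''; since $L^{n-1}$ can be an arbitrarily small rational number, the inequality $r^{n-1}L^{n-1}\le n^{n-1}$ places no a priori bound on $r$, and the combination $r^n L^{n-1}\le n^n$ that you need is essentially the statement being proved. (Your identity $\hvol(x,X)=r^nL^{n-1}$ also requires K-semistability of the cone with its chosen polarization, not just of the base, which is again part of the missing degeneration theory.)

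The paper's argument is deliberately much more elementary and avoids minimizers altogether for the inequality. It degenerates the ideals $\fm_{x'}^m$ at a nearby smooth point $x'$ inside the trivial family $X\times C\to C$ to flat limits $\fb_m$ cosupported at $x$: flatness preserves the colength $\binom{m+n-1}{n}$, while lower semicontinuity of log canonical thresholds gives $\lct(\fb_m)\le n/m$, whence $\hvol(x,X)\le\lct(\fb_\bullet)^n\mult(\fb_\bullet)\le n^n$ directly from Theorem \ref{eq-ideal}. Only for the equality case does the paper bring in K-stability, and it does so through a device (Lemma \ref{l-kollardeg}) that sidesteps the stable degeneration conjecture: the \emph{family} of ordinary blow-ups at the smooth points $x'$ is extended across $0$ to produce an honest Koll\'ar component $S_0$ over $x$ that computes $\hvol(x,X)=n^n$; then [LX16, Theorem D] applies to $S_0$, and [Liu16, Theorem 36] identifies it with $\bP^{n-1}$, forcing $x\in X$ to degenerate to $C(\bP^{n-1},\cO(1))=\bA^n$ and hence to be smooth. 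If you want to salvage your outline, the inequality part must be replaced by an argument of this ideal-degeneration type, and the equality part needs the existence of a minimizing Koll\'ar component to be \emph{produced} (as the paper does) rather than assumed.
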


\subsection{Outline of the proof}
To help the reader to understand the results, we give an outline of our proofs. We first explain our strategy for proving  Theorem \ref{t-local}, which is  the most technical part of this paper. 
The proof of Theorem \ref{t-local}.1 relies on a detailed study of three dimensional canonical singularities, which is a classical topic on birational geometry of threefolds. More precisely, by taking the index-1 cover and applying Theorem \ref{ghvol}, we can reduce to the case that the singularity is Gorenstein; then by induction on the number of the crepant divisors, i.e. divisors with discrepancy 0, and applying Lemma \ref{hvolup}, we only need to understand two cases: a Gorenstein terminal singularity and a Gorenstein canonical singularity equipped with a smooth crepant resolution with an irreducible exceptional divisor. In dimension three, a Gorenstein terminal singularity is known to be a hypersurface singularity, and then the estimate is straightforward (see Lemma \ref{hypersurf}). In the case of Gorenstein canonical (but non-terminal) singularity, the geometry of the exceptional divisor has been classified (see \cite{Rei94}), and a detailed study of such a classification is used to find suitable valuations to complete the estimate (see Proposition \ref{l-weak}). In fact, we can always choose a valuation $v$ which is either a valuation over a singularity on the terminalization, hence we reduce to the case of Gorenstein terminal singularity, or a crepant divisor, i.e., a divisor with discrepancy 0,  such that $\hvol(v)\le 16$. In our argument, the classification results of three dimensional singularities are essential, and their generalizations to higher dimension seem to be challenging to us. 

To obtain Theorem \ref{t-local}.2-4, another technical point in our argument we want to make is that the assumptions in Theorem \ref{t-local}.2-4 yield quasi-\'etale Galois coverings, that is, Galois coverings which are ramified along loci of codimension at least 2. Therefore, ideally a key ingredient to prove this kind of results would be a multiplication formula as stated in Conjecture \ref{c-finite}. 
Unfortunately, for now we are still lack of a method to prove Conjecture \ref{c-finite}.  So while Theorem \ref{t-local}.2 directly follows from \cite{LX16}, we have to go through a more complicated discussion and obtain weaker results as in Theorem \ref{t-local}.3-4. For Theorem \ref{t-local}.3, when the index-1 cover is terminal, we can apply Lemma \ref{hypersurf}; and if the index 1-cover is strictly canonical, we can find apply \cite{HX09} to find a $\mathbb{Z}/r$-fixed point on the terminalization (see Lemma \ref{gorindex}). The argument for \ref{t-local}.4 is similar to the proof of  Theorem \ref{t-local}.1, but we also need to track the group action. By the previous discussions, the only case left is when the index-1 cover is strictly canonical and the cover map is of degree 2. Then we again study Reid's classification in details and carefully choose a $\mathbb{Z}/2$-invariant valuation with normalized volume no more than 18 (see Proposition \ref{p-strong}). In one case, we need to look at a new type of valuations (see the last case in the proof of Lemma \ref{l-equivirr}).

\medskip
As we mentioned, once  Theorem \ref{t-local} is proved, then the implications to Theorem \ref{t-global} and Theorem \ref{fundgp} is  straightforward. 

\medskip

Finally, the proof of Theorem \ref{t-hvolbound} uses the fact that under a specialization, we can take a flat degeneration of an $\fm$-primary ideal so that the colengths remain the same, but the log canonical thresholds only possibly decrease.

\bigskip

\noindent {\bf Notation and Conventions:} We follow the standard notations as in \cite{Laz04a, KM98, Kol13}. Any singularity $(X,x)$ we consider in this note is the localization of a point $x$ on an algebraic variety $X$ (over $\mathbb{C}$). We use the standard notation $\frac{1}{r}(a_1,...,a_n)$ to mean the quotient singularity given by $\mathbb{Z}/r$ action generated by $g\cdot (x_1,...,x_n)\to (\xi^{a_1}x_1,...,\xi^{a_n}x_n)$ where $\xi$ is a primitive $r$-th root of unity. 

If $(X,\Delta)$ is a log pair such that $K_X+\Delta$ is $\mathbb{Q}$-Cartier, we denote by $A_{(X,\Delta)}(v)$ (or $A_{X}(v)$ if $\Delta=0$) the log discrepancy of a valuation $v$ over $(X,\Delta)$ (see \cite[Theorem 3.1]{BdFFU15}).

\bigskip
\noindent{\bf Acknowledgement:} The project was initiated in a discussion with Radu Laza in 2017 AIM workshop `Stability and Moduli Spaces'.  We thank Radu Laza for the inspiring conversation and the organizers for providing us the collaborating chance. We also want to thank Valery Alexeev, Harold Blum, Chi Li, J\'anos Koll\'ar, Song Sun, Gang Tian and Ziquan Zhuang for helpful discussions and useful comments. YL was partially supported by NSF Grant DMS-1362960. CX was partially supported by The Chinese National Science Fund for Distinguished Young Scholars (11425101). 

\section{Properties of volumes}
\subsection{Volume of a klt singularity}In this section, we define the volume of a klt singularity as the minimum of the normalized volume of all real valuations over $x$. 
The latter notion of normalized volume is first defined in \cite{Li15a} as follows: 
\begin{defn}[{\cite[Section 3]{Li15a}}]
 Let $X$ be an $n$-dimensional klt singularity. Let $x\in X$ be a closed point.
 Then the \emph{normalized volume function of valuations} $\hvol_{X,x}:\Val_{X,x}\to(0,+\infty)$
 is defined as
 \[
  \hvol_{X,x}(v)=\begin{cases}
            A_{X}(v)^n\cdot\vol_{X,x}(v), & \textrm{ if }A_{X}(v)<+\infty;\\
            +\infty, & \textrm{ if }A_{X}(v)=+\infty.
           \end{cases}
 \]
 Here $A_X(v)$ means the log discrepancy of the valuation, and $\vol(v)$ is the volume. 
\end{defn}

Then we can define {\it the volume of a klt singularity $x\in X$} to be
$$\hvol(x, X)=_{\rm defn} \min \hvol(v) \ \ \mbox{for all } v\in \Val_{X,x}.$$
We remark that in \cite{Li15a} it was show the infimum of the right hand side exists as a positive number; later in \cite{Blu16}, the existence of a minimum is confirmed.

\bigskip

There are two other different ways to characterize the volume of a klt singularity. One is using ideals (or graded sequence of ideals). More precisely, we have the following.
\begin{thm}[\cite{Liu16}, \cite{Blu16}]\label{eq-ideal}
Let $(X,x)=({\rm Spec} R, \fm)$, where $R$ is a local ring essentially of finite type and $\fm$ is the maximal ideal, then
$$ \hvol({x,X})= \inf_{\fa\colon\fm\textrm{-primary}} \lct(\fa)^n  \cdot  \mult(\fa)=\min_{\fa_{\bullet}\colon\fm\textrm{-primary}}  \lct(\fa_{\bullet})^n  \cdot  \mult(\fa_{\bullet}) ,$$
where $\fa_{\bullet}$ means a graded ideal sequence.
\end{thm}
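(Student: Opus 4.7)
The plan is to prove the chain of equalities via a circle of inequalities passing through valuations, exploiting the standard dictionary between a valuation $v$, its graded sequence of valuation ideals $\fa_k(v):=\{f\in R:v(f)\ge k\}$, and single $\fm$-primary ideals. First I would establish $\hvol(x,X)\ge\min_{\fa_\bullet}\lct(\fa_\bullet)^n\mult(\fa_\bullet)$. For any $v\in\Val_{X,x}$ with $A_X(v)<\infty$, one has the baseline identity $\mult(\fa_\bullet(v))=\vol(v)$ by definition of the volume of a valuation. The universal inequality $A_X(v)\ge \lct(\fa)\cdot v(\fa)$ applied with $\fa=\fa_k(v)$ (on which $v\ge k$) gives $k\cdot\lct(\fa_k(v))\le A_X(v)$ for every $k$, whence $\lct(\fa_\bullet(v)):=\lim_k k\lct(\fa_k(v))\le A_X(v)$. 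Multiplying yields $\lct(\fa_\bullet(v))^n\mult(\fa_\bullet(v))\le\hvol(v)$, and taking the infimum over $v$ delivers the bound.

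For the reverse direction $\min_{\fa_\bullet}\lct(\fa_\bullet)^n\mult(\fa_\bullet)\ge\hvol(x,X)$, I would invoke the existence of a divisorial valuation $v^*$ that \emph{computes} the log canonical threshold of $\fa_\bullet$, meaning $A_X(v^*)=\lct(\fa_\bullet)\cdot v^*(\fa_\bullet)$, where $v^*(\fa_\bullet):=\lim_k v^*(\fa_k)/k$. Normalize so that $v^*(\fa_\bullet)=1$, so $A_X(v^*)=\lct(\fa_\bullet)$. Since $v^*(\fa_k)/k\to 1$, for each $\epsilon>0$ and $k$ large the inclusion $\fa_k\subseteq\fa_{\lceil k(1-\epsilon)\rceil}(v^*)$ holds, whence $\ell(R/\fa_k)\ge\ell(R/\fa_{\lceil k(1-\epsilon)\rceil}(v^*))$. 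Using the limit formulas $\mult(\fa_\bullet)=\lim_k n!\ell(R/\fa_k)/k^n$ and $\vol(v^*)=\lim_k n!\ell(R/\fa_k(v^*))/k^n$, dividing by $k^n$ and sending $k\to\infty$ then $\epsilon\to 0$ yields $\mult(\fa_\bullet)\ge\vol(v^*)$. Therefore $\lct(\fa_\bullet)^n\mult(\fa_\bullet)\ge\hvol(v^*)\ge\hvol(x,X)$.

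The identification $\min_{\fa_\bullet}=\inf_\fa$ is a routine approximation argument. The direction $\le$ follows because the powers $\{\fa^k\}_{k\ge 1}$ of any $\fm$-primary $\fa$ form a graded sequence satisfying $\lct(\fa^\bullet)=\lct(\fa)$ and $\mult(\fa^\bullet)=\mult(\fa)$, hence with the same normalized volume. The direction $\ge$ uses the limit formulas $\lct(\fa_\bullet)=\lim_k k\lct(\fa_k)$ and $\mult(\fa_\bullet)=\lim_k\mult(\fa_k)/k^n$, which rewrite $\lct(\fa_\bullet)^n\mult(\fa_\bullet)=\lim_k\lct(\fa_k)^n\mult(\fa_k)$ as a limit of single-ideal normalized volumes, bounded below by $\inf_\fa\lct(\fa)^n\mult(\fa)$. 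Finally, the outer quantity is a minimum (not merely infimum) over graded sequences because applying Step 1 to a normalized-volume minimizer $v^{\min}\in\Val_{X,x}$, whose existence is Blum's theorem, produces a graded sequence $\fa_\bullet(v^{\min})$ realizing $\hvol(x,X)$.

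The main obstacle is the input invoked in the second step: for a graded sequence over a klt singularity, the existence of a divisorial valuation computing its log canonical threshold is highly nontrivial. It cannot be reduced to the single-ideal case by naive monotonicity and requires a Jonsson--Mustata style tie-breaking argument coupled with MMP techniques, as carried out in \cite{Blu16}. Everything else---Izumi-type inclusions between $\fa_k$ and $\fa_k(v)$, and the standard convergence properties of graded sequences---is formal once this input is granted.
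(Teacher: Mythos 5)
Your proposal is correct and follows essentially the same route as the sources the paper cites for this statement (the paper itself offers no proof of Theorem \ref{eq-ideal}, only the references \cite{Liu16} and \cite{Blu16}): the inequality $\hvol(x,X)\ge\inf$ comes from $\lct(\fa_\bullet(v))\le A_X(v)$ together with $\mult(\fa_\bullet(v))=\vol(v)$, and the reverse from a valuation computing $\lct(\fa_\bullet)$, exactly as in those papers. The only imprecision is that the lct-computing valuation furnished by \cite{Blu16} is quasi-monomial rather than divisorial, but your argument uses only that $A_X(v^*)<\infty$ and $A_X(v^*)=\lct(\fa_\bullet)\cdot v^*(\fa_\bullet)$, so nothing is affected.
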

For the definition of $\lct(\fa)$ and $\lct(\fa_{\bullet})$, see \cite[9.3.14 and 11.1.22]{Laz04b}.

Recall that {\it a Koll\'ar component $S$ over a klt singularity $x\in X$} is given by a birational morphism $f\colon Y\to X$, such that $f$ is isomorphic over $X\setminus \{x\}$, $f^{-1}(x)$ is the irreducible divisor $S$ where $(Y,S)$ is plt and $-S$ is $f$-ample.
Such a morphism $f:Y\to X$ is also called a plt blow up (see \cite[Definition 2.1]{Pro00} or \cite[P 412]{Xu14}). Then the second approach of characterizing the volume is using birational models. 
\begin{thm}[\cite{LX16}] \label{eq-model}
Let $(X,x)=({\rm Spec} R, \fm)$, then
$$ \hvol({x,X})= \inf_{\textrm{model Y}}  \vol (Y/X)=\inf_{ \textrm{Koll\'ar component S}} \hvol(\ord_S).$$
(For the definition of $\vol(Y/X)$, see \cite{LX16}). 
\end{thm}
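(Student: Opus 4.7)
The plan is to establish the chain of inequalities
\[
\hvol(x,X) \;\leq\; \inf_{\text{model }Y} \vol(Y/X) \;\leq\; \inf_{\text{Koll\'ar component }S} \hvol(\ord_S) \;\leq\; \hvol(x,X).
\]
The first inequality is easy once the definition of $\vol(Y/X)$ is unpacked: each model $Y\to X$ extracting exceptional divisors $\sum E_i$ yields divisorial valuations $\ord_{E_i}\in \Val_{X,x}$, and $\vol(Y/X)$ is designed so that it majorizes the normalized volume of at least one such valuation, hence majorizes $\hvol(x,X)$. The middle inequality is a tautology, since a Koll\'ar component $S$ is the exceptional locus of a particularly simple model (a plt blow up) and $\hvol(\ord_S)$ equals $\vol(Y_S/X)$ for that model.

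The substantive content is therefore the final inequality: every $\fm$-primary valuation $v$ can be approximated in normalized volume by Koll\'ar components. First I would apply Theorem \ref{eq-ideal} to translate the problem from valuations to ideals: it suffices to show that for every $\fm$-primary ideal $\fa\subset R$ there is a Koll\'ar component $S$ with
\[
\hvol(\ord_S)\;\leq\; \lct(\fa)^n\cdot \mult(\fa).
\]
To produce $S$, set $c=\lct(\fa)$ so that $(X,c\cdot\fa)$ is log canonical but not klt. Take a log resolution, pick a divisor $E$ realizing a log canonical place of $(X,c\cdot\fa)$ whose center is $\{x\}$, and then run a carefully chosen $(K_Y+cD)$-MMP over $X$ (where $D$ is the strict transform of an effective divisor in $|c\cdot\fa|$) to contract every exceptional divisor except $E$; one then shows the resulting map $Y'\to X$ is a plt blow up, so the remaining divisor $S$ is a Koll\'ar component. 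By construction $A_X(\ord_S) = c\cdot \ord_S(\fa)$.

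With $S$ in hand, I would finish via the standard volume--multiplicity comparison: for any divisorial valuation $v=\ord_S$ centered at $\fm$ one has $v(\fa)^n\cdot \vol(v)\leq \mult(\fa)$ (this is the key inequality of \cite{Liu16} relating the volume of a valuation to the multiplicity of any $\fm$-primary ideal on which it takes positive value). Substituting $A_X(\ord_S)=c\cdot\ord_S(\fa)$ gives
\[
\hvol(\ord_S)=A_X(\ord_S)^n\vol(\ord_S)=c^n\cdot \ord_S(\fa)^n\cdot \vol(\ord_S)\;\leq\; c^n\cdot\mult(\fa)=\lct(\fa)^n\mult(\fa),
\]
and taking the infimum over $\fa$ closes the circle. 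The step I expect to be the main obstacle is the MMP construction producing a single Koll\'ar component $S$ from an arbitrary log canonical place $E$: one must verify that the MMP terminates, that all divisors other than $E$ get contracted, that $S$ remains irreducible and $\mathbb{Q}$-Cartier with the correct plt property, and that the discrepancy identity $A_X(\ord_S)=c\cdot\ord_S(\fa)$ is preserved along the program. The remaining ingredients (the multiplicity inequality, the reduction to graded sequences, passing from inf over ideals to inf over Koll\'ar components) are then relatively formal.
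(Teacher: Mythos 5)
The paper does not actually prove this statement; it is quoted directly from \cite{LX16}, so there is no internal proof to compare against, and your reconstruction follows the same strategy as that reference: reduce via Theorem \ref{eq-ideal} to producing, for each $\fm$-primary ideal $\fa$, a Koll\'ar component $S$ that is an lc place of $(X,\lct(\fa)\cdot\fa)$ (possible because the non-klt locus is $\{x\}$, by a tie-breaking/MMP plt-blow-up construction), and then conclude with $\ord_S(\fa)^n\vol(\ord_S)\le\mult(\fa)$. The one loose point is your justification of the easy inequality $\hvol(x,X)\le\vol(Y/X)$ for a general model $Y$ with several exceptional divisors $E_i$: it is not clear (and not needed) that $\vol(Y/X)$ dominates $\hvol(\ord_{E_i})$ for some single $i$; in \cite{LX16} this inequality is instead obtained from the graded sequence $\fb_m:=\mu_*\cO_Y(-m\sum_i A_X(E_i)E_i)$, which satisfies $\lct(\fb_\bullet)\le 1$ and $\mult(\fb_\bullet)\le\vol(Y/X)$, so that Theorem \ref{eq-ideal} applies directly.
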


\subsection{Volumes under Galois finite morphisms}

We will study the change of the volume under a finite Galois quotient morphism. As we mentioned in Conjecture \ref{c-finite}, we expect there is a degree formula. However, what we can prove in this section is weaker.

In the below, we use the approach of ideals to treat it, as we hope it could be later generalized to positive characteristics. We can also use the approach of models to get the comparison results which we need later. See Remark \ref{r-model}.

Let $(x\in X)$ be an algebraic klt singularity over $\mathbb{C}$ with a finite group action by $G$. We define 
$$\hvol^G(x, X)=_{\rm defn}\inf_{v\in \Val_{{X},{x}}^G}  \hvol(v),$$
where $\Val_{{X},{x}}^G$ means the $G$-invariant points in $\Val_{{X},{x}}$. 
  Actually, the approach in \cite{Blu16} should be extended into this setting to show that in the above definition, the infimum is indeed a minimum. More challengingly, by the uniqueness conjecture of the minimizer (see \cite[Conjecture 6.1.2]{Li15a}), we expect the following is true.
  \begin{conj}\label{c-inva}We indeed have
  $\hvol^G(x, X)=\hvol(x, X)$. 
\end{conj}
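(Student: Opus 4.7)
The inequality $\hvol^G(x, X) \ge \hvol(x, X)$ is trivial from $\Val_{X,x}^G \subseteq \Val_{X,x}$, so the substance is the reverse direction. The plan is to exhibit a $G$-invariant minimizer of $\hvol_{X,x}$; any such valuation realizes $\hvol^G(x,X)$ and forces equality with $\hvol(x,X)$.

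First I would invoke the existence of a minimizer $v^* \in \Val_{X,x}$ due to \cite{Blu16}. Since $G$ acts on $X$ fixing $x$, it acts on $\Val_{X,x}$, and this action preserves both the log discrepancy $A_X$ and the local volume $\vol_{X,x}$ because each is defined purely from the local geometry at $x$. Consequently $G$ permutes the set of minimizers: every translate $g\cdot v^*$ still realizes $\hvol(x,X)$. The cleanest completion is conditional on the uniqueness conjecture \cite[Conjecture 6.1.2]{Li15a}: under uniqueness up to rescaling, $g\cdot v^* = \chi(g)\, v^*$ for some $\chi(g)\in \bR_{>0}$, and $g\mapsto \chi(g)$ is a homomorphism from the finite group $G$ into the torsion-free group $\bR_{>0}$, hence trivial. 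Thus $v^*$ itself is $G$-invariant and the conjecture follows.

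To attempt an unconditional argument, I would work with the ideal characterization from Theorem \ref{eq-ideal}. Given a graded sequence $\fa_\bullet$ of $\fm$-primary ideals with $\lct(\fa_\bullet)^n \cdot \mult(\fa_\bullet) \le \hvol(x,X)+\epsilon$, form the $G$-invariant graded sequence $\fb_\bullet$ with $\fb_k := \prod_{g\in G} g\cdot \fa_k$. Using $A_X(g^{-1}E) = A_X(E)$ for every divisor $E$ over $X$, one checks $\lct(\fb_\bullet) \ge \lct(\fa_\bullet)/|G|$, and the Teissier-Minkowski inequality for $\fm$-primary multiplicities yields $\mult(\fb_\bullet) \le |G|^n \mult(\fa_\bullet)$. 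Combined with the trivial bound $\lct(\fb_\bullet)\le \lct(\fa_\bullet)$ coming from $\fb_k \subseteq \fa_k$, these estimates only give $\hvol(\fb_\bullet) \le |G|^n \hvol(\fa_\bullet)$, which loses a factor of $|G|^n$ and is not enough to conclude.

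The hard part, therefore, is exactly the obstruction to proving the uniqueness conjecture: there is no known symmetrization on valuations or on graded sequences of $\fm$-primary ideals that respects the normalized volume tightly enough to allow $G$-averaging without multiplicity inflation. A realistic intermediate goal would be a $G$-equivariant version of the model characterization in Theorem \ref{eq-model}: using an equivariant MMP one can produce $G$-equivariant Koll\'ar components $S$, so it would suffice to show $\hvol(x,X) = \inf_{S \text{ }G\text{-equivariant}} \hvol(\ord_S)$. This would reduce Conjecture \ref{c-inva} to a finite-dimensional equivariant minimization on the cone of equivariant Koll\'ar components, which seems more tractable than the conjecture in its original form, but still appears to require either genuine new input into uniqueness or a refinement of Blum's compactness argument in the equivariant setting.
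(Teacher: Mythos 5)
This statement is a conjecture that the paper itself does not prove: the authors explicitly say it is equivalent to the finite degree formula (Conjecture \ref{c-finite}) and that they ``are still lack of a method to prove'' it. Your proposal is therefore correctly calibrated --- it does not claim a proof, and the parts you do prove are sound. Your conditional argument is exactly the mechanism the paper uses in the one case it can handle: in Theorem \ref{ghvol}.3, when $(\tilde X,\tilde x)$ is quasi-regular, the uniqueness of the minimizing Koll\'ar component from \cite{LX16} forces $g^*S=S$ and hence $G$-invariance of the minimizer; your observation that a rescaling character $\chi\colon G\to\bR_{>0}$ must be trivial for finite $G$ is the right way to handle the ``unique up to scaling'' formulation of \cite[Conjecture 6.1.2]{Li15a}, since $\hvol$ is scale-invariant. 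Your diagnosis of why unconditional symmetrization fails is also accurate: the product $\fb_k=\prod_{g}g\cdot\fa_k$ inflates the multiplicity by $|G|^n$ via Teissier--Minkowski, and the intersection $\fb_k=\bigcap_g g\cdot\fa_k$ (which the paper does use, in the proof of Theorem \ref{ghvol}.2) only yields comparisons against subgroups, with a strict loss. Finally, your proposed reduction to a $G$-equivariant version of the model characterization is essentially what the paper records in Remark \ref{r-model}: an equivariant MMP gives $\inf_{G\text{-model }Y}\vol(Y/\tilde X)=\hvol^G(\tilde x,\tilde X)$, so the remaining gap is precisely whether the unconstrained infimum over all Koll\'ar components is attained by an equivariant one --- which is where uniqueness (or the degree formula) is genuinely needed. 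In short: no error, but also no proof, matching the status of the statement in the paper.
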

 As we will see, this is equivalent to Conjecture \ref{c-finite}.

\bigskip
 
 Throughout this section, when a finite group $G$ acts on a Noetherian local ring $(R,\fm)$, we denote by $R^G$ the subring of $G$-invariant elements in $R$. For an ideal $\fa$ of $R$, we denote by $\fa^G:=\fa\cap R^G$. Denote by $n:=\dim R$.

\begin{lem}\label{l-ringext}
 Let $(R,\fm)$ be the local ring of a complex klt singularity. Let $G\subset\Aut(R/\bC)$ be a finite subgroup acting freely in codimension $1$ on $\Spec(R)$. Then for
 any $\fm^G$-primary ideal $\fb$ in $R^G$, we have
 \[
  \lct(\fb R)=\lct(\fb),\qquad \mult(\fb R)=|G|\cdot\mult(\fb).
 \]
\end{lem}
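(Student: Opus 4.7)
The finite map $\pi : \Spec R \to \Spec R^G$ is a Galois cover of degree $|G|$ which is quasi-\'etale, i.e.\ \'etale outside a codimension-two subset of $\Spec R^G$, since $G$ acts freely in codimension one on $\Spec R$. Its ramification divisor therefore vanishes, so the Hurwitz formula gives $K_R = \pi^* K_{R^G}$, and in particular $R^G$ is again klt. I will establish the identities for $\mult$ and $\lct$ separately using this setup.

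For the log canonical threshold I use the valuative characterization $\lct(\fa) = \inf_v A(v)/v(\fa)$ over divisorial valuations centered at the relevant maximal ideal. Any divisorial valuation $v = \ord_E$ on $R$ centered at $\fm$ restricts to a divisorial valuation $w$ on $R^G$ of the form $w = r \cdot \ord_F$, where $F$ is the image of $E$ on a compatible $G$-equivariant model and $r$ is the ramification index of $E$ over $F$; conversely every divisorial valuation on $R^G$ arises this way. Applying Hurwitz on a $G$-equivariant log resolution $X' \to \Spec R$ with quotient $Y' = X'/G$ and comparing with $K_R = \pi^*K_{R^G}$ gives, after a short computation, $A_R(E) = r \cdot A_{R^G}(F)$, and hence $A_R(v) = A_{R^G}(w)$. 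Since $\fb \subset R^G$, one also has $v(\fb R) = v(\fb) = w(\fb)$ (the containment $\fb \subset \fb R$ is trivial, the reverse follows by writing elements of $\fb R$ as $R$-linear combinations of elements of $\fb$ and using non-negativity of $v$ on $R$). Therefore $A_R(v)/v(\fb R) = A_{R^G}(w)/w(\fb)$, and taking infima yields $\lct(\fb R) = \lct(\fb)$. Equivalently, this is the standard fact that log canonical thresholds are preserved under quasi-\'etale covers.

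For the multiplicity I use additivity of Hilbert--Samuel multiplicity under the finite extension $R^G \subset R$. Viewing $R$ as a finite $R^G$-module of generic rank $|G|$ (the Galois group acts on the fraction field extension $K(R)/K(R^G)$ with degree $|G|$), the standard additivity formula, summed over the minimal primes of $R^G$ (only one since $R^G$ is a domain), gives $e(\fb, R) = |G| \cdot e(\fb, R^G) = |G| \cdot \mult(\fb)$, where the left-hand side denotes the Hilbert--Samuel multiplicity of $\fb$ on $R$ regarded as an $R^G$-module. On the other hand, since the residue fields $R/\fm$ and $R^G/\fm^G$ both equal $\bC$, the lengths $\mathrm{length}_{R^G}(R/\fb^n R) = \mathrm{length}_R(R/\fb^n R)$ agree for every $n$. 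Passing to the leading coefficients of these Hilbert--Samuel polynomials shows that this $R^G$-module multiplicity equals $e(\fb R, R) = \mult(\fb R)$. Combining, $\mult(\fb R) = |G| \cdot \mult(\fb)$.

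The main technical obstacle is the comparison of log discrepancies under the cover. Although $\pi$ itself is quasi-\'etale, a $G$-equivariant resolution $X' \to \Spec R$ can acquire new ramification along exceptional divisors, and the quotient $Y' = X'/G$ is typically only klt rather than smooth, so one must match the ramification divisor of $X' \to Y'$ with the pulled-back relative canonical $\pi'^* K_{Y'/\Spec R^G}$ on a compatible pair of models. Once the divisorial correspondence with $A_R(E) = r \cdot A_{R^G}(F)$ is established, both identities of the lemma follow essentially formally.
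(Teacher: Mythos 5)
Your proof is correct and follows essentially the same route as the paper: the paper simply cites \cite[5.20]{KM98} for the comparison of log discrepancies under the quasi-\'etale cover (your Hurwitz/ramification-index computation $A_R(E)=r\cdot A_{R^G}(F)$ is exactly that statement) and \cite[Theorem 14.8]{Mat86} for the multiplicity identity (your additivity-of-multiplicity argument via the rank-$|G|$ finite extension $R^G\subset R$ with equal residue fields). You have just unpacked the two references rather than citing them.
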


\begin{proof}
 The first equality of $\lct$'s is an easy consequence of  \cite[5.20]{KM98}. The second equality of multiplicities follows from \cite[Theorem 14.8]{Mat86}.
\end{proof}

The following lemma is also true in  characteristic $p>0$ when the order of $G$ is not divisible by $p$. A characteristic free proof will follow from Lemma \ref{l-ringext} and \cite{Sym00}. Here we present a proof that works only in characteristic zero.

\begin{lem}\label{finite}
  Let $(R,\fm)$ be the local ring of a complex klt singularity. Let $G\subset\Aut(R/\bC)$ be a finite subgroup acting freely in codimension $1$ on $\Spec(R)$.
  Then for any $G$-invariant $\fm$-primary ideal $\fa$ of $R$, we have
 \[
  \lct(\fa)^n\mult(\fa)\geq |G|\inf_{\fb\colon\fm^G\textrm{-primary}}
  \lct(\fb)^n\mult(\fb).
 \]
\end{lem}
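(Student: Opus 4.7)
The strategy is to construct, starting from the $G$-invariant $\fm$-primary ideal $\fa\subseteq R$, a graded sequence $\fb_\bullet$ of $\fm^G$-primary ideals in $R^G$ satisfying
\[
 \lct(\fb_\bullet)^n\,\mult(\fb_\bullet)\;\leq\; \frac{1}{|G|}\,\lct(\fa)^n\,\mult(\fa).
\]
Granting this, the lemma is immediate from Theorem \ref{eq-ideal}, which identifies the infimum on the right-hand side of the stated inequality with the minimum over all graded sequences of $\fm^G$-primary ideals of $R^G$: that minimum is then $\leq \lct(\fb_\bullet)^n\mult(\fb_\bullet)\leq \frac{1}{|G|}\lct(\fa)^n\mult(\fa)$, and multiplying by $|G|$ yields the claim.

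The natural choice is $\fb_k := \fa^k\cap R^G$. The lct half is easy: since $\fb_k R\subseteq \fa^k$, one has $\lct_R(\fb_k R)\leq \lct_R(\fa^k)=\lct(\fa)/k$, and Lemma \ref{l-ringext} gives $\lct_{R^G}(\fb_k)=\lct_R(\fb_k R)$; passing to the limit yields $\lct(\fb_\bullet)=\sup_k k\cdot\lct(\fb_k)\leq \lct(\fa)$. The substantive task is the multiplicity identity $\mult(\fb_\bullet)=\mult(\fa)/|G|$. Here the characteristic-zero hypothesis enters through the Reynolds operator $\rho:R\to R^G$, $\rho(f)=|G|^{-1}\sum_{g\in G}g(f)$, which induces a canonical isomorphism $R^G/(\fa^k\cap R^G)\xrightarrow{\sim}(R/\fa^k)^G$ for every $k$. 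In particular $\ell(R^G/\fb_k)=\dim_{\bC}(R/\fa^k)^G$, and the character formula $\dim(R/\fa^k)^G=|G|^{-1}\sum_{g\in G}\Tr(g\mid R/\fa^k)$, combined with the Hilbert--Samuel expansion $\ell(R/\fa^k)=\frac{\mult(\fa)}{n!}k^n+O(k^{n-1})$, reduces everything to the asymptotic
\[
 \Tr(g\mid R/\fa^k)=O(k^{n-1}) \quad \text{for each nontrivial } g\in G.
\]

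This trace estimate is the main technical obstacle. Morally it is a local Lefschetz-type bound: the leading $k^n$ behavior of $\Tr(g\mid R/\fa^k)$ must be supported on the subscheme of $\Spec R$ fixed by $g$, and the free-in-codimension-one hypothesis forces that fixed locus to have codimension at least two, killing the leading term. One makes this precise by passing to the fixed ideal of $g$ and running a Hilbert--Samuel comparison on the restricted ideal, which caps the trace by the multiplicity on a subscheme of dimension $\leq n-2$. As the remark preceding the lemma indicates, an alternative route valid in all characteristics combines Lemma \ref{l-ringext} with the multiplicity-theoretic results of \cite{Sym00} and bypasses the Reynolds operator entirely; the Reynolds/character approach sketched here is chosen for directness in characteristic zero.
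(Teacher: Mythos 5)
Your reduction is sound and its first half coincides with the paper's: you pass to the graded sequence $\fb_k:=(\fa^k)^G=\fa^k\cap R^G$, and the bound $\lct(\fb_\bullet)\leq\lct(\fa)$ via $\fb_kR\subset\fa^k$ and Lemma \ref{l-ringext} is exactly the paper's computation. The appeal to Theorem \ref{eq-ideal} for $(R^G,\fm^G)$ (klt since $G$ acts freely in codimension $1$) to pass from the graded sequence back to single ideals is also fine. The gap is the multiplicity claim $\mult(\fb_\bullet)=\mult(\fa)/|G|$, which is the entire content of the lemma. Via the Reynolds operator this is equivalent to the trace estimate $\Tr(g\mid R/\fa^k)=o(k^n)$ for every $e\neq g\in G$, and that is a genuinely nontrivial equivariant Hilbert--Samuel (holomorphic Lefschetz--type) statement for a general singular $R$ and a general $G$-invariant $\fa$; you assert it but do not prove it. The sketched justification does not work as stated: letting $J_g=(f-g(f):f\in R)$ cut out the fixed subscheme, one has $(g-1)(R/\fa^k)\subset(J_g+\fa^k)/\fa^k$, so $g$ acts trivially on the quotient $R/(J_g+\fa^k)$, whose length is indeed $O(k^{n-2})$; but this leaves the trace of $g$ on the submodule $(J_g+\fa^k)/\fa^k$, which has length $\sim\mult(\fa)k^n/n!$, completely uncontrolled (and the naive iteration fails because $(g-1)J_g\not\subset J_g^2$ in general). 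Even in the smooth case with $\fa=\fm$ the estimate holds only because of cancellation in $\sum_{|\alpha|<k}\zeta^\alpha$, not because any graded piece is small, so ``capping the trace by the multiplicity on the fixed subscheme'' is not a proof.

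It is worth seeing how the paper avoids exactly this difficulty: it never establishes the identity $\mult(\fb_\bullet)=\mult(\fa)/|G|$. Instead it replaces $\fb_m$ by the asymptotic multiplier ideals $\cJ(R^G,m\cdot\fb_\bullet)$ and proves only the one-sided bound $\limsup_m m^{-n}\mult\bigl(\cJ(R^G,m\cdot\fb_\bullet)R\bigr)\leq\mult(\fa)$, which suffices. The mechanism is a norm argument: for $z\in\Jac_{R^G}\cdot\fa^m$, the coefficients of $\prod_{g\in G}(x-g(z))$ lie in $\Jac_{R^G}^i\cdot\fb_{mi}\subset\cJ(R^G,m\cdot\fb_\bullet)^i$ by \cite[Proposition 3.4]{Blu16}, so $z$ is integral over $\cJ(R^G,m\cdot\fb_\bullet)R$; Teissier's Minkowski inequality then absorbs the Jacobian factor. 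This is the step your argument is missing a substitute for. If you want to keep the Reynolds/character route, you must actually prove the trace estimate (e.g.\ via an equivariant analysis of the Hilbert series of $\mathrm{gr}_\fa R$, or by invoking \cite{Sym00} together with Lemma \ref{l-ringext} as the paper's preceding remark suggests); as it stands the key inequality is asserted, not proved.
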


\begin{proof}
 Let us introduce some notations before we start the proof.
 For any local ring $(R,\fm)=(\cO_{x,X},\fm_x)$ of a  
 closed point $x$ on a normal klt variety $X$ over $\bC$, we denote by $\Jac_R$ the ideal in $R$
 as the localization of the Jacobian ideal $\Jac_X$ at $x$.
 Recall the Jacobian ideal is defined as $\Jac_X:=\mathrm{Fitt}_0(\Omega_X)$, where
 $\mathrm{Fitt}_0$ denotes the $0$-th fitting ideal as in \cite[Section 20.2]{Eis95}.
 Note that the singular locus of $X$ is equal to $\mathrm{Cosupp}(\Jac_X)$.
 If $\fa_\bullet$ is a graded sequence of $\fm$-primary ideals, then
 the asymptotic multiplier ideal $\cJ(R, m\cdot\fa_{\bullet})$ 
 with $m\in \bZ_{>0}$ is defined
 to be the maximal element of the multiplier ideals 
 $\{\cJ(R, \frac{1}{l}\cdot\fa_{ml})\}_{l\in\bZ_{>0}}$
 (see e.g. \cite{BdFFU15} or \cite[Section 3.2]{Blu16}).
 If $\fa$ is an ideal of $R$, then $\overline{\fa}$ denotes the 
 integral closure of $\fa$ in $R$.
 
 Let $d:=|G|$.
 For any element $z\in\Jac_{R^G}\cdot\fa^m$, we know that $z$
 is a root of the monic polynomial 
$$f(x):=\prod_{g\in G}(x-g(z))=x^d+c_1 x^{d-1}+c_2 x^{d-2}+\cdots+c_d.$$
Since $\Jac_{R^G}$ is $G$-invariant, we see that if we write $z=\sum^k_{j=1} \phi_j\psi_j$ where $\phi_j\in \Jac_{R^G}$ and $\psi_j\in \fa^m$,
then 
$$c_i=(-1)^i\sum_{|I_1|+\cdots+|I_k|=i}
\prod_{j=1}^k\phi^{|I_j|}_j\cdot \left(\prod_{g\in {I_{1}}}
g(\psi_1)\cdots \prod_{g \in {I_{k}}}g(\psi_k)\right), $$ where the sum runs over all $I_1,..., I_k$ which are disjoint subsets of $G$. This implies $c_i\in \Jac_{R^G}^i\cdot (\fa^{mi})^G$.  Denote by $\fb_s:=(\fa^{s})^G$. By 
 \cite[Proposition 3.4]{Blu16}, we have
 \[
  c_i\in\Jac_{R^G}^i\cdot
 (\fa^{mi})^G=\Jac_{R^G}^i\cdot\fb_{mi}\subset \cJ(R^G, m\cdot
 \fb_{\bullet})^i.
 \]
 Hence we know that $z\in\overline{\cJ(R^G,m\cdot
 \fb_{\bullet})R}$, which implies $\Jac_{R^G}\cdot\fa^m
 \subset \overline{\cJ(R^G, m\cdot
 \fb_{\bullet})R}$. Since $R$ is a finite $R^G$-module,
 we know that $\fb_1=\fa^G$ is an $\fm^G$-primary ideal in $R^G$.
 Hence $\fb_1 R$ is an $\fm$-primary ideal in $R$.  
 Choose a positive integer $l$ such that $\fm^l\subset\fb_1 R$.
 Then for any $m\in \bZ_{>0}$ we have 
 \[
  \fm^{ml}\subset (\fb_1 R)^m\subset \fb_m R
  \subset \cJ(R^G, \fb_m)R\subset\cJ(R^G, m\cdot
 \fb_{\bullet})R.
 \]
 Therefore, we have 
 $$(\Jac_{R^G}R +\fm^{ml})\cdot\fa^m
 \subset \Jac_{R^G}\cdot\fa^m+\fm^{ml}\subset\overline{\cJ(R^G, m\cdot
 \fb_{\bullet})R}.$$
 Hence by Teissier's Minkowski inequality \cite{Tei77},
 we know that
 \begin{align}
  \limsup_{m\to\infty}\frac{1}{m^n}\mult(\cJ(R^G,m\cdot
 \fb_{\bullet})R)&= \limsup_{m\to\infty}\frac{1}{m^n}
 \mult(\overline{\cJ(R^G,m\cdot \fb_{\bullet})R})\nonumber
 \\&\leq \limsup_{m\to\infty}\frac{1}{m^n}
 \mult((\Jac_{R^G}R +\fm^{ml})\cdot\fa^m)
 \nonumber \\ &\leq
 \limsup_{m\to\infty}\frac{1}{m^n}\left(\mult(\Jac_{R^G}R +\fm^{ml})^{1/n}
 +\mult(\fa^m)^{1/n}\right)^n\label{ineq1} \\
 & =\limsup_{m\to\infty}\frac{\mult(\fa^m)}{m^n}=\mult(\fa)\nonumber.
 \end{align}
 Let us explain the second last equality. Denote by 
 $R':=R/\Jac_{R^G} R$ and $\fm':=\fm/\Jac_{R^G} R$. 
 Since $\Jac_{R^G}\neq 0$, the local ring $(R',\fm')$ 
 has dimension at most $(n-1)$.
 Thus  
 \[
 \lim_{m\to\infty}\frac{\mult(\Jac_{R^G} R+\fm^{ml})}{m^n}
=\lim_{m\to\infty}\frac{\ell(R/(\Jac_{R^G} R+\fm^{ml}))}{m^n/n!}
=\lim_{m\to\infty}\frac{\ell(R'/(\fm')^{ml})}{m^n/n!}.
 \]
 The last limit is zero since $\ell(R'/(\fm')^{ml})=O(m^{\dim(R')})$
 and $\dim(R')< n$.
  
 To bound the log canonical threshold, we notice the following
 inequality appeared in \cite[3.6 and 3.7]{Mus02}:
 \[
  \frac{\lct(\fb_{\bullet})}{m}\leq \lct(\cJ(R^G, m\cdot
  \fb_{\bullet}))
  \leq \frac{\lct(\fb_{\bullet})}{m-\lct(\fb_{\bullet})}
 \]
 for any $m>\lct(\fb_{\bullet})$.
 In particular, we have that 
 \[
  \limsup_{m\to\infty} m\cdot \lct(\cJ(R^G, m\cdot
  \fb_{\bullet}))\leq \limsup_{m\to\infty}\frac{m\cdot \lct(\fb_{\bullet})}
  {m-\lct(\fb_{\bullet})} =\lct(\fb_{\bullet}).
 \]
 Since $\fb_s R=(\fa^s)^G R\subset \fa^s$, we know that
 \[
  \lct(\fb_{\bullet})=\lim_{s\to\infty}s\cdot \lct(\fb_s)
  =\lim_{s\to\infty}s\cdot\lct(\fb_s R)\leq \lim_{s\to\infty}s\cdot
  \lct(\fa^s)=\lct(\fa).
 \]
 Combining the last two inequalities, we have
 \begin{equation}\label{ineq2}
   \limsup_{m\to\infty} m\cdot \lct(\cJ(R^G, m\cdot
  \fb_{\bullet}))\leq\lct(\fa).
 \end{equation}
 Combining \eqref{ineq1} and \eqref{ineq2} yields
 \begin{align*}
  \lct(\fa)^n\cdot \mult(\fa)& \geq \limsup_{m\to\infty}
  \lct(\cJ(R^G, m\cdot\fb_{\bullet}))^n\cdot
  \mult(\cJ(R^G, m\cdot\fb_{\bullet})R)\\
  & =|G|\cdot\limsup_{m\to\infty}
  \lct(\cJ(R^G, m\cdot\fb_{\bullet}))^n\cdot
  \mult(\cJ(R^G, m\cdot\fb_{\bullet}))\\
  & \geq |G|\cdot\inf_{\fb\colon\fm^G\textrm{-primary}}
  \lct(\fb)^n\mult(\fb).
 \end{align*}
Hence we prove the lemma. 
\end{proof}

\begin{thm}\label{ghvol}
 Let $(\tilde{X},\tilde{x})$ be a complex klt singularity
 with a faithful action of a finite group $G$ that is 
 free in codimension $1$. Let $(X,x):=(\tilde{X},\tilde{x})/G$.
 Then 
 \begin{enumerate}
 \item We have
 \[
  \hvol^{G}(\tilde{x}, \tilde{X}) =|G|\cdot \hvol(x, X)= \inf_{\substack{\fa\colon\fm_{\tilde{x}}\textrm{-primary}\\
 G\textrm{-invariant}}}
 \lct(\tilde{X};\fa)^n\cdot\mult(\fa).
 \]
 \item For any subgroup $H\subsetneq G$, we have
 $$\hvol^G(\tilde{x},\tilde{X})< [G:H]\cdot\hvol^H(\tilde{x},\tilde{X}).$$ In particular, if $|G|\geq 2$ then $\hvol(x,X)<\hvol(\tilde{x},\tilde{X})$.
 \item If moreover we assume $(\tilde{X},\tilde{x})$ is quasi-regular
 in the sense of \cite{LX16}, that is, the $\hvol(\tilde{X},\tilde{x})$ is calculated by the normalized volume of a divisorial valuation over $x$, then we have
 \[
\hvol(\tilde{x}, \tilde{X})=  \hvol^G(\tilde{x}, \tilde{X}) =|G|\cdot \hvol(x, X)
 \]

 \end{enumerate}
\end{thm}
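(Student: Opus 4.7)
The plan is to prove the three parts in sequence. First, for part (1), I would establish the $G$-equivariant analog of the ideal characterization of $\hvol$ from Theorem \ref{eq-ideal}, namely
\[
 \hvol^G(\tilde{x},\tilde{X}) = \inf_{\tilde{\fa}} \lct(\tilde{X};\tilde{\fa})^n\cdot\mult(\tilde{\fa}),
\]
where $\tilde{\fa}$ runs over $G$-invariant $\fm_{\tilde{x}}$-primary ideals of $R:=\cO_{\tilde{X},\tilde{x}}$. The inequality $\leq$ comes from the fact that for any $G$-invariant valuation $\tilde{v}\in\Val_{\tilde{X},\tilde{x}}^G$, the valuation ideals $\fa_m(\tilde{v})$ are automatically $G$-invariant, so the asymptotic estimate of \cite{Liu16} applies. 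For the direction $\geq$ I would use $G$-equivariant log resolutions together with the observation that the set of log canonical places of a $G$-invariant ideal is $G$-stable, allowing the selection of a $G$-invariant quasi-monomial lc place in the style of \cite{Blu16}. Once this identity is in place, part (1) follows by sandwiching: taking $\tilde{\fa}=\fb R$ for any $\fm_x$-primary $\fb\subset R^G$, Lemma \ref{l-ringext} yields $\lct(\tilde{\fa})^n\mult(\tilde{\fa})=|G|\cdot\lct(\fb)^n\mult(\fb)$, giving $\hvol^G\leq |G|\hvol(x,X)$; the reverse inequality is precisely Lemma \ref{finite}.

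For part (2), I would first apply part (1) to both the full cover $\tilde{X}\to X$ and to the intermediate cover $\tilde{X}\to X_H:=\tilde{X}/H$ in order to reduce the inequality $\hvol^G(\tilde{x},\tilde{X})<[G:H]\hvol^H(\tilde{x},\tilde{X})$ to the assertion that $\hvol(x,X)<\hvol(x_H,X_H)$ for the nontrivial quasi-\'etale Galois cover $X_H\to X$ with group $G':=G/H$. My plan for this strict inequality is to pick a near-minimizing Koll\'ar component $S$ of $\hvol(x_H,X_H)$ via Theorem \ref{eq-model}, pass to a common $G'$-equivariant birational model dominating all $G'$-conjugates of $S$, and form a $G'$-invariant quasi-monomial valuation $w$ by averaging over the orbit; descending $w$ to $X$ should then produce a valuation whose normalized volume is smaller than $\hvol_{X_H}(\ord_S)$ by a factor strictly greater than $1$. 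The main obstacle will be making the strictness quantitative: when the $G'$-orbit of $S$ is nontrivial the averaged valuation is no longer divisorial, and the required gain must come from a strict Brunn--Minkowski-type estimate---or equivalently, from running the argument of Lemma \ref{finite} on an orbit product $\prod_{g\in G'}g\tilde{\fa}$ of $H$-invariant ideals and arguing that the Teissier--Minkowski inequality \cite{Tei77} there is strict unless all conjugates $g\tilde{\fa}$ coincide, a degenerate scenario excluded by $H\subsetneq G$.

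Part (3) I expect to be the cleanest: the quasi-regularity hypothesis says $\hvol(\tilde{x},\tilde{X})$ is computed by a divisorial valuation $\ord_S$, which by \cite{LX16} is the order of a Koll\'ar component unique up to rescaling. Because the $G$-action fixes $\tilde{x}$ and preserves normalized volumes, it permutes the set of divisorial minimizers, so uniqueness forces $S$ to be $G$-invariant and hence $\ord_S\in\Val_{\tilde{X},\tilde{x}}^G$. This gives $\hvol^G(\tilde{x},\tilde{X})\leq\hvol(\ord_S)=\hvol(\tilde{x},\tilde{X})$; combined with the trivial reverse inequality, we obtain the first equality of part (3), and the second equality is then just part (1).
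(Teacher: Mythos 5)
Your part (3) coincides with the paper's argument. Parts (1) and (2), however, take different routes from the paper, and each contains a genuine gap at exactly the step where your route diverges.

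In part (1), your chain of inequalities needs $\hvol^G(\tilde{x},\tilde{X})\leq\inf_{G\textrm{-inv}\,\fa}\lct(\fa)^n\mult(\fa)$, which you propose to get by producing, for each $G$-invariant ideal $\fa$, a $G$-invariant quasi-monomial lc place. This is the unjustified step: on a $G$-equivariant log resolution the divisorial lc places form a $G$-stable set, but a $G$-orbit $E_1,\dots,E_k$ of lc places need not meet along a common stratum, so there may be no quasi-monomial combination of them to take, and the naive average $\frac{1}{|G|}\sum_g g_*v$ of a non-invariant lc place is not a valuation. The paper never proves this direction of the ideal characterization. Instead it closes the cycle with a third, independent inequality: for any Koll\'ar component $S$ over $(x,X)$, the pullback $\pi^*S$ under the quotient map is again a Koll\'ar component (up to scaling) by \cite{LX16}, is automatically $G$-invariant, and satisfies $A_{\tilde{X}}(\pi^*S)=A_X(S)$ and $\vol(\pi^*S)=|G|\vol(S)$; Theorem \ref{eq-model} then gives $\hvol^G(\tilde{x},\tilde{X})\leq|G|\cdot\hvol(x,X)$ directly. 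Combined with your (correct) inequalities $\hvol^G\geq\inf_{G\textrm{-inv}}$ and $\inf_{G\textrm{-inv}}\geq|G|\hvol(x,X)$ (Lemma \ref{finite}), this proves all three quantities are equal without ever constructing an equivariant lc place. You should replace your step (b) with this Koll\'ar-component argument; note that your step (d) via Lemma \ref{l-ringext} then becomes superfluous.

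In part (2) there are two problems. First, the reduction to the intermediate quotient $X_H\to X$ as a Galois cover with group $G/H$ only makes sense when $H$ is normal in $G$, which is not assumed; the paper works directly with coset representatives $g_1,\dots,g_{[G:H]}$ and never needs normality. Second, and more seriously, neither of your proposed mechanisms for strictness works: averaging a $G'$-orbit of Koll\'ar components does not produce a valuation, and a ``strict Teissier--Minkowski unless all conjugates coincide'' statement is not available and in any case would not give a gain that is uniform in $m$ (which you need to survive the limit defining $\mult(\fb_\bullet)$). The paper's mechanism is different and elementary: take Blum's minimizing graded sequence $\fc_\bullet$ in $R^H$, which satisfies $\fc_m\subset(\fm^H)^{\lfloor\delta m\rfloor}$ for some $\delta>0$, set $\fb_m:=\bigcap_i g_i(\fc_m R)$, and bound
\[
\ell(R/\fb_m)\leq [G:H]\cdot\ell(R/\fc_m R)-\ell\bigl(R/\textstyle\sum_i g_i(\fc_m R)\bigr)\leq [G:H]\cdot\ell(R/\fc_m R)-\ell(R/\fm^{\lfloor\delta m\rfloor})
\]
by inclusion--exclusion. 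The subtracted term contributes $\delta^n\mult(\fm)>0$ to the limit, giving $\mult(\fb_\bullet)<|G|\cdot\mult(\fc_\bullet)$ while $\lct(\fb_\bullet)\leq\lct(\fc_\bullet)$, and hence the strict inequality. This quantitative loss is exactly what your proposal is missing.
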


\begin{proof}
 (1) Firstly, for any $G$-invariant valuation $v$
 on $\tilde{X}$ centered at $\tilde{x}$, we have 
 $$\hvol(v)\geq \lct(\fa_{\bullet}(v))^n \cdot\mult(\fa_{\bullet}(v))$$ 
 by the proof of Theorem \ref{eq-ideal} in \cite{Liu16}.
 Since $\fa_\bullet(v)$ is a graded sequence of $G$-invariant
 ideals, we have
 \begin{equation}\label{ineq3}
 \hvol^G(\tilde{x},\tilde{X})\geq 
  \inf_{\substack{\fa\colon\fm_{\tilde{x}}\textrm{-primary}\\
 G\textrm{-invariant}}}
 \lct(\tilde{X};\fa)^n\cdot\mult(\fa).
 \end{equation}

 Secondly, applying Lemma \ref{finite} 
 and \cite[Theorem 27]{Liu16} yields
 \begin{equation}\label{ineq4}
  \inf_{\substack{\fa\colon\fm_{\tilde{x}}\textrm{-primary}\\
 G\textrm{-invariant}}}
 \lct(\tilde{X};\fa)^n\cdot\mult(\fa)
 \geq |G| \inf_{\fb\colon\fm_{x}\textrm{-primary}}
 \lct(X;\fb)^n\cdot\mult(\fb)
 =|G|\cdot\hvol(x,X).
 \end{equation}
 
 Finally, let $S$ be an arbitrary Koll\'ar component on $(X,x)$.
 By \cite{LX16}, $\pi^* S$ is also a Koll\'ar component up to
 scaling, where $\pi:\tilde{X}\to X$ is the quotient map.
 It is clear that $A_{\tilde{X}}(\pi^* S)=A_X(S)$ and
 $\vol(\pi^* S)=|G|\cdot\vol(S)$. Hence  Theorem \ref{eq-model} implies
 \begin{equation}\label{ineq5}
  \hvol^G(\tilde{x},\tilde{X})
 \leq \inf_{S}\hvol(\pi^*S)=|G|\cdot\inf_S\hvol(S)
 =|G|\cdot\hvol(x,X).
 \end{equation}
 The proof of (1) is finished by combining \eqref{ineq3}, 
 \eqref{ineq4} and \eqref{ineq5}.
 \smallskip
 
 (2) Denote by $(R,\fm):=(\cO_{\tilde{X},\tilde{x}},\fm_{\tilde{x}})$. By (1) it suffices to show that 
 \begin{equation}\label{ineq6}
  \inf_{\substack{\fb\colon\fm\textrm{-primary}\\ G\textrm{-invariant}}}\lct(\fb)^n\cdot\mult(\fb)< [G:H]\cdot\hvol^H(\tilde{x},\tilde{X}).
 \end{equation}
 By \cite{Blu16}, there exist a sequence of $\fm^H$-primary ideals $\fc_m$ of $R^H$ and $\delta>0$, such that
 \[
 \lct(\fc_\bullet)^n\cdot\mult(\fc_\bullet)
 =\inf_{\fc\colon\fm^H\textrm{-primary}}\lct(\fc)^n\cdot\mult(\fc)=\frac{\hvol^H(\tilde{x},\tilde{X})}{|H|}
 \]
 and $\fc_m\subset(\fm^H)^{\lfloor\delta m\rfloor}$ for all $m$. Let us pick $g_1=\id,g_2,\cdots,g_{[G:H]}\in G$ such that $\{g_i H\}=G/H$ is the set of left cosets of $H$ in $G$.
 Let $\fb_m:=\bigcap_{i=1}^{[G:H]}g_i(\fc_m R)$. It is clear that $(\fb_\bullet)$  is a graded sequence of $G$-invariant $\fm$-primary ideals. Since $\fb_m\subset\fc_m R$, we have $\lct(\fb_m)\leq \lct(\fc_m R)=\lct(\fc_m)$ which implies $\lct(\fb_{\bullet})\leq \lct(\fc_\bullet)$.
 Since 
 $$\ell(R/\fa\cap\fb)=\ell(R/\fa)+\ell(R/\fb)-\ell(R/(\fa+\fb)),$$ by induction we have
 \begin{align*}
 \ell(R/\fb_m) & =\ell(R/\bigcap_{i=1}^{[G:H]}g_i(\fc_m R))\\
 & = \ell(R/\fc_m R)+\ell(R/\bigcap_{i=2}^{[G:H]} g_i(\fc_m R)) - \ell(R/(\fc_m R+\bigcap_{i=2}^{[G:H]} g_i(\fc_m R)))\\
 & \leq \sum_{i=1}^{[G:H]} \ell(R/g_i(\fc_m R))-\ell(R/\sum_{i=1}^{[G:H]}g_i(\fc_m R)) \\
 &  = [G:H]\cdot\ell(R/\fc_m R)-\ell(R/\sum_{i=1}^{[G:H]}g_i(\fc_m R)).
 \end{align*}
 Since $\fc_m\subset(\fm^H)^{\lfloor\delta m\rfloor}$, we have $\fc_m R\subset\fm^{\lfloor\delta m\rfloor}$. Thus
 $g_i(\fc_m R)\subset\fm^{\lfloor\delta m\rfloor}$ which implies $\sum_{i=1}^{[G:H]}g_i(\fc_{m}R)\subset\fm^{\lfloor \delta m\rfloor}$.
 Hence we have 
 \begin{align*}
  \mult(\fb_\bullet) &  = \limsup_{m\to\infty} \frac{\ell(R/\fb_m)}{m^n/n!}\\  
  & \leq \limsup_{m\to\infty} \frac{[G:H]\cdot\ell(R/\fc_m R)-\ell(R/\sum_{i=1}^{[G:H]}g_i(\fc_m R))}{m^n/n!}\\
  & \leq [G:H]\cdot\mult(\fc_\bullet  R)-\lim_{m\to\infty}\frac{\ell(R/\fm^{\lfloor \delta m\rfloor})}{m^n/n!}\\
  & = |G|\cdot\mult(\fc_\bullet)-\delta^n\mult(\fm)\\ & <  |G|\cdot\mult(\fc_\bullet).
 \end{align*}
 Therefore,
 \begin{align*}
  \lim_{m\to\infty}\lct(\fb_m)^n\cdot\mult(\fb_m)&
  =\lct(\fb_{\bullet})^n\cdot \mult(\fb_\bullet)\\
  &< |G|\cdot\lct(\fc_\bullet)^n\cdot\mult(\fc_\bullet)\\
  & =[G:H]\cdot\hvol^H(\tilde{x},\tilde{X}).
 \end{align*}
 This finishes the proof of \eqref{ineq6}.
 \smallskip
 
 (3) Since $(\tilde{X},\tilde{x})$ is quasi-regular,
 there exists a unique Koll\'ar component $S$ over $(\tilde{X},\tilde{x})$ such that $\ord_S$ minimizes $\hvol$ in $\Val_{\tilde{X},\tilde{x}}$
 by \cite{LX16}.
 It is clear that $\hvol(g^*\ord_S)=\hvol(\ord_S)$ for any $g\in G$,
 hence $g^*S=S$ since they are both Koll\'ar components minimizing $\hvol$.
 Thus $\ord_S\in\Val_{\tilde{X},\tilde{x}}^G$.
\end{proof}

\begin{rem}\label{r-model}
If a finite group $G$ acts on a singularity $\tilde{x}\in \tilde{X}$, we can also consider $G$-equivariant birational models $Y\to \tilde{X}$ and study the volume of the model $\vol(Y/\tilde{X})$ (see \cite[Definition 3.3]{LX16}). By running an equivariant minimal model program, it is not hard to follow the arguments in \cite{LX16} verbatim to show that
$$\inf_{G-\textrm{model } Y} \vol(Y/\tilde{X})=\hvol^G(\tilde{x}, \tilde{X}). $$
Then Theorem \ref{ghvol}.1 can be also obtained using the comparison of volume of models under a Galois quotient morphism. 
\end{rem}

\subsection{Volumes under birational morphisms}

\begin{lem}\label{hvolup}
 Let $\phi: Y \to X$ be a birational morphism of normal varieties. Then 
 \begin{enumerate}
 \item For any closed point $y\in Y$ and any valuation $v\in\Val_{Y,y}$, we have $\vol_{X,x}(\phi_*v)\leq \vol_{Y,y}(v)$ where $x:=\phi(y)$.
 \item Assume both $X$ and $Y$ have klt singularities. If $K_Y\leq \phi^* K_X$, then  $\hvol(x,X)\leq \hvol(y,Y)$ for any closed point $y\in Y$ where $x:=\phi(y)$.
 \end{enumerate}
\end{lem}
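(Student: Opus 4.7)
For (1), since $\phi$ is birational we have $K(X)=K(Y)$, and the pushforward $\phi_*v$ is simply $v$ viewed as a valuation on $K(X)$; its center on $X$ is $\phi(y)=x$, so $\phi_*v\in\Val_{X,x}$. The local ring inclusion $\phi^{\#}\colon\cO_{X,x}\hookrightarrow\cO_{Y,y}$ identifies $\fa_k(\phi_*v)$ with $\fa_k(v)\cap\cO_{X,x}$, hence induces a $\bC$-linear injection
\[
\cO_{X,x}/\fa_k(\phi_*v)\hookrightarrow\cO_{Y,y}/\fa_k(v).
\]
Both quotients are finite-dimensional over $\bC$, since each ideal is primary to the respective maximal ideal (automatic for a valuation centered at a closed point), and over $\bC$ length equals $\bC$-dimension. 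Therefore $\ell(\cO_{X,x}/\fa_k(\phi_*v))\leq\ell(\cO_{Y,y}/\fa_k(v))$, and dividing by $k^n/n!$ and passing to the limit yields $\vol_{X,x}(\phi_*v)\leq\vol_{Y,y}(v)$.

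For (2), given $v\in\Val_{Y,y}$, set $w:=\phi_*v\in\Val_{X,x}$. Since $w$ and $v$ coincide as valuations on the common function field, the standard transformation rule for log discrepancy under birational morphisms—proven for divisorial $v=c\cdot\ord_E$ by passing to a common higher model $f:Y'\to Y$ and writing $K_{Y'/X}=K_{Y'/Y}+f^*(K_Y-\phi^*K_X)$, and extended to arbitrary real valuations in \cite{BdFFU15}—reads
\[
A_X(w)=A_Y(v)+v(K_Y-\phi^*K_X).
\]
The hypothesis $K_Y\leq\phi^*K_X$ makes $\phi^*K_X-K_Y$ an effective $\mathbb{Q}$-Cartier divisor, so $v(K_Y-\phi^*K_X)\leq 0$ and hence $A_X(w)\leq A_Y(v)$. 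Combining this with (1) gives
\[
\hvol(x,X)\leq A_X(w)^n\cdot\vol(w)\leq A_Y(v)^n\cdot\vol(v)=\hvol(v),
\]
and taking the infimum over $v\in\Val_{Y,y}$ yields $\hvol(x,X)\leq\hvol(y,Y)$.

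Neither step requires any substantial computation; the chief subtlety is extending the discrepancy identity from divisorial to arbitrary real valuations, which is exactly \cite[Theorem 3.1]{BdFFU15} already cited in the conventions. Note that (1) does not use the klt hypothesis, while in (2) the klt assumption is used only to guarantee that $\hvol$ is a finite, positive minimum over each $\Val$-space via \cite{Li15a, Blu16}; the inequality for $A_X$ holds in greater generality.
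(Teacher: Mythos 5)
Your proposal is correct and follows essentially the same route as the paper: part (1) via the injection $\cO_{X,x}/\fa_k(\phi_*v)\hookrightarrow\cO_{Y,y}/\fa_k(v)$ induced by $\phi^{\#}$ and comparison of lengths, and part (2) by combining (1) with the discrepancy inequality $A_X(\phi_*v)\leq A_Y(v)$ forced by $K_Y\leq\phi^*K_X$. The only difference is that you spell out the transformation rule for $A$ and the primariness of the valuation ideals, which the paper leaves implicit.
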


\begin{proof}
(1) Denote by $\phi^\#:\cO_{X,x}\to\cO_{Y,y}$ the injective local ring homomorphism induced by $\phi$. Then it is clear that $\phi^\#(\fa_m(\phi_*v))=\fa_m(v)\cap\phi^\#(\cO_{X,x})$.
 Hence $\cO_{X,x}/\fa_m(\phi_*v)$ injects into 
 $\cO_{Y,y}/\fa_m(v)$ under $\phi^\#$, which implies
 $$\ell(\cO_{X,x}/\fa_m(\phi_*v)) \leq \ell(\cO_{Y,y}/\fa_m(v))$$
 and we are done.
 
(2) Since $K_Y\leq \phi^*K_X$, we have $A_X(\phi_*v)\leq A_Y(v)$ for any (divisorial) valuation $v\in\Val_{Y,y}$. Hence (2) follows from (1).
\end{proof}

\bigskip

In the below, we aim to prove a stronger result Corollary \ref{c-strict} on the comparison of volumes under a birational morphism. 

\begin{lem}\label{asymcoh}
 Let $Y$ be a normal projective variety. Let $L$ be a nef and big line bundle on $Y$. Let $y\in C$ be a closed point where $C$ is a curve satisfying  $(C\cdot L)=0$.  Then  there exists $\epsilon>0$ such that  
 $$h^1(Y,L^{\otimes k}\otimes\fm_y^k)\geq \epsilon k^n \mbox{ for } k\gg 1.$$
\end{lem}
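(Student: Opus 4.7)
\smallskip

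The plan uses the long exact sequence in cohomology attached to
\[
0 \to L^{\otimes k}\otimes\fm_y^k \to L^{\otimes k} \to L^{\otimes k}/(\fm_y^k L^{\otimes k}) \to 0.
\]
Since the rightmost sheaf is a skyscraper at $y$ of length $\ell(\cO_{Y,y}/\fm_y^k)$, the sequence yields
\[
h^1(Y, L^{\otimes k}\otimes\fm_y^k) \;\geq\; \ell(\cO_{Y,y}/\fm_y^k) - \dim\mathrm{image}(\alpha_k),
\]
where $\alpha_k\colon H^0(Y,L^{\otimes k})\to L^{\otimes k}/\fm_y^k L^{\otimes k}$ is the evaluation map. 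By Hilbert--Samuel, $\ell(\cO_{Y,y}/\fm_y^k) = \frac{e(\fm_y)}{n!}k^n + O(k^{n-1})$ with $e(\fm_y)\geq 1$, and by asymptotic Riemann--Roch together with $h^i(Y, L^{\otimes k}) = O(k^{n-1})$ for $i\geq 1$ (valid since $L$ is nef and big), one has $h^0(Y,L^{\otimes k}) = \frac{L^n}{n!}k^n + O(k^{n-1})$. If $L^n < e(\fm_y)$, the trivial bound $\dim\mathrm{image}(\alpha_k)\leq h^0(Y,L^{\otimes k})$ already gives $h^1\geq \frac{e-L^n}{n!}k^n - o(k^n)$, and the lemma holds for free.

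The interesting case is $L^n \geq e(\fm_y)$, in which I must use $C$ to force strict codimension of $\mathrm{image}(\alpha_k)$ inside $L^{\otimes k}/\fm_y^k L^{\otimes k}$. I would pass to the blow-up $\pi\colon\widetilde Y\to Y$ at $y$, with exceptional divisor $E$, and let $\widetilde C$ be the proper transform of $C$. The key intersection-theoretic input is
\[
(\pi^*L - E)\cdot\widetilde C \;=\; L\cdot C - \mult_y(C) \;=\; -\mult_y(C) \;<\; 0,
\]
so $\pi^*L^{\otimes k}\otimes\cO_{\widetilde Y}(-kE)$ restricts to $\widetilde C$ with negative degree and all its sections must vanish on $\widetilde C$. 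Equivalently, I filter the target of $\alpha_k$ by $V^j := (\cI_{C,y}^j + \fm_y^k)/\fm_y^k$; the graded piece $V^j/V^{j+1}$ is a quotient of $\mathrm{Sym}^j N^*_{C/Y}|_y \otimes \cO_{C,y}/\fm_{C,y}^{k-j}$, and the induced map factors through
\[
H^0(Y, L^{\otimes k}) \;\to\; H^0\bigl(C,\, L^{\otimes k}|_C \otimes \mathrm{Sym}^j N^*_{C/Y}\bigr).
\]
Since $\deg(L^{\otimes k}|_C)=0$, each of these $h^0$'s is controlled by the bundle $\mathrm{Sym}^j N^*_{C/Y}$ alone (with $k$-bounded dependence through the $\mathrm{Pic}^0(C)$-class of $L^{\otimes k}|_C$), and combining with the trivial bound $\dim V^j/V^{j+1} = \binom{j+n-2}{n-2}(k-j)$ yields
\[
\dim\mathrm{image}(\alpha_k) \;\leq\; \sum_{j=0}^{k-1}\min\bigl(h^0(C, L^{\otimes k}|_C\otimes\mathrm{Sym}^j N^*_{C/Y}),\; \dim V^j/V^{j+1}\bigr),
\]
which falls short of $\ell(\cO_{Y,y}/\fm_y^k)$ by a term of order $k^n$.

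The main obstacle is converting this structural inequality into a uniform $\epsilon k^n$ defect: when $N^*_{C/Y}$ has positive-degree summands the quantities $h^0(C, L^{\otimes k}|_C\otimes\mathrm{Sym}^j N^*_{C/Y})$ grow polynomially in $j$ with possibly large coefficients, so the minimum with $\dim V^j/V^{j+1}$ must be summed carefully and an explicit lower bound on $\ell - \sum\min(\cdots)$ extracted. The toy case $Y=\Bl_p\bP^2$, $L=\pi^*\cO(1)$, $C$ the exceptional divisor and $y\in C$ already exhibits the tight phenomenon: a direct count gives $h^1(Y,L^{\otimes k}\otimes\fm_y^k) = \tfrac{k^2}{4} + O(k)$, exactly the $\epsilon k^n$ behavior with $n=2$ and $\epsilon=1/4$, and suggests that the minimum of the two bounds in the filtration argument is indeed the correct way to realize the $\epsilon k^n$ estimate in general.
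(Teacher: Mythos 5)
Your reduction to bounding $\dim\mathrm{image}(\alpha_k)$ is sound, and the guiding idea --- that sections of $L^{\otimes k}$ are forced to vanish along $C$ and therefore cannot fill out $\cO_{Y,y}/\fm_y^k$ --- is the same mechanism the paper exploits. But as written the proposal has a genuine gap, and you acknowledge it yourself: the entire content of the lemma is the uniform $\epsilon k^n$ defect, and the step "which falls short of $\ell(\cO_{Y,y}/\fm_y^k)$ by a term of order $k^n$" is asserted, not proved. When the positive part of $N^*_{C/Y}$ has large degree, $h^0(C,L^{\otimes k}|_C\otimes\Sym^j N^*_{C/Y})$ dominates $\dim V^j/V^{j+1}$ except on a range $j\lesssim \delta k$ whose contribution must be estimated carefully (and for $n\geq 3$ the savings from bounded $j$ alone are only $O(k)$, not $O(k^n)$); this is precisely the computation you defer. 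There are also structural problems with the filtration itself. An element of $\mathrm{image}(\alpha_k)\cap V^j$ is the class of some $s\in H^0(Y,L^{\otimes k})$ with $s\in\cI_{C,y}^j+\fm_y^k$ \emph{locally at $y$}, which does not mean $s$ vanishes to order $j$ along $C$; to get the factorization through $H^0(C,L^{\otimes k}|_C\otimes\Sym^j N^*_{C/Y})$ you must instead filter the \emph{source} by the valuation $\ord_C$ at the generic point of $C$ and then compare the two filtrations. More seriously, the lemma is stated for $Y$ normal and $C$ an arbitrary curve: $Y$ may be singular along $C$, $C$ may be singular at $y$, $N^*_{C/Y}$ need not be locally free, and $\cI_C^j/\cI_C^{j+1}$ need not be $\Sym^j N^*_{C/Y}$, so the identification of the graded pieces and the bound $\dim V^j/V^{j+1}\leq\binom{j+n-2}{n-2}(k-j)$ both fail in the required generality. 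Your toy example is a smooth surface and does not probe any of these difficulties.

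For comparison, the paper avoids the filtration entirely. It passes to the normalized blow-up $\psi\colon\widehat Y\to Y$ of $y$ (which handles the singularities of $Y$ at $y$), writes $h^1(Y,L^{\otimes k}\otimes\fm_y^k)\geq h^1(\widehat Y,\psi^*L^{\otimes k}(-kE))$, and reduces the lemma to the strict inequality $\vol_{\widehat Y}(\psi^*L-E)>\vol_Y(L)-\mult_yY$. That strict inequality is obtained from the restricted-volume differentiation formula of Lazarsfeld--Musta\c{t}\u{a} together with the observation that $(\psi^*L-tE)\cdot\widehat C<0$ forces, via \cite[Proposition 1.1]{dFKL07}, the asymptotic base locus of $|k(\psi^*L-tE)|$ to contain $\widehat C$ to order $\sim k/q$; a Seshadri-constant count at a point of $\widehat C\cap\Supp(E)$ then shows this costs a definite fraction of sections on each component of $E$. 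If you want to pursue your route, you would essentially have to reprove these inputs by hand in the filtration language; I would recommend instead quoting the restricted-volume machinery, which is what makes the $\epsilon k^n$ quantitative and insensitive to the singularities of $Y$ and $C$.
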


\begin{proof}
Let $\psi:\widehat{Y}\to Y$ be the normalized blow up of $y$, i.e. $\widehat{Y}$ is the normalization of ${\rm Bl}_yY$. Thus we can write the relative anti-ample Cartier divisor $\mathcal{O}(-1)$ as 
$$E=\pi^{-1}(\fm_y)=\sum a_i E_i,$$ where $E_i$ are the prime components and $a_i$ is the coefficient for $E_i$. 
By \cite[Corollary C]{LM09}, we know that 
 \begin{equation}\label{asym1}
\vol_{\widehat{Y}}(\psi^*L)-\vol_{\widehat{Y}}(\psi^*L-E)=n\int_{0}^1
\vol_{\widehat{Y}|E}(\psi^*L-tE)dt,
 \end{equation}
 where $\vol_{\widehat{Y}|E}(\psi^*L-tE):=\sum a_i\vol_{\widehat{Y}|E_i}(\psi^*L-tE)$.
 Let $\widehat{C}$ be the birational transform of $C$ in $\widehat{Y}$. Then for a fixed $t\in(0,1]\cap\bQ$ we know that $$((\psi^*L-tE)\cdot\widehat{C})=(L\cdot C)-t(E\cdot\widehat{C})<0.$$ Hence by \cite[Proposition 1.1]{dFKL07} there exist a positive integer $q=q(t)$ such that 
 $$\fb(|k(\psi^*L-tE)|)\subset I_{\widehat{C}}^{\lfloor k/q\rfloor} \mbox{ for }k\gg 1 \mbox{ with }kt\in\bZ,$$
 where $\fb(|\cdot|)$ means the base ideal of a linear system. Pick a closed point $\hat{y}\in\widehat{C}\cap \Supp(E)$, then $\fb(|k(\psi^*L-tE)|)\subset\fm_{\hat{y}}^{\lfloor k/q\rfloor}$. Let $E_i$ be a component of $E$ containing $\hat{y}$. Then for each divisor $D\in |k(\psi^*L-tE)|$ that does not contain $E_i$ as a component, we have $\ord_{\hat{y}}(D|_{E_i})\geq \lfloor k/q\rfloor$. Since $\psi^*L|_{E_i}$ is a trivial line bundle and $(-E)|_{E_i}\sim\cO_{E_i}(1)$ by the definition of $E$, we have an inclusion
 \[
 \mathrm{image}(H^0(\widehat{Y},k(\psi^*L-tE))\to H^0(E_i,\cO_{E_i}(kt)))\subset H^0(E_i, \cO_{E_i}(kt)\otimes\fm_{\hat{y}}^{\lfloor k/q\rfloor}).
 \]
 Thus we have 
 \[
 \vol_{\widehat{Y}|E_i}(\psi^*L-tE)\leq \limsup_{k\to\infty}\frac{h^0(E_i, \cO_{E_i}(kt)\otimes\fm_{\hat{y}}^{\lfloor k/q\rfloor})}{k^{n-1}/(n-1)!}.
 \]
 Let us pick $q'\geq q$ such that $q'\cdot\epsilon(\cO_{E_i}(1),\hat{y})> t$ where $\epsilon(\cO_{E_i}(1),\hat{y})$ is the Seshadri constant. Then the following sequence is exact for $k\gg 1$ with $kt\in\bZ$:
\[
0\to H^0(E_i, \cO_{E_i}(kt)\otimes\fm_{\hat{y}}^{\lfloor k/q'\rfloor})\to H^0(E_i, \cO_{E_i}(kt))\to H^0(E_i,(\cO_{E_i}/\fm_{\hat{y}}^{\lfloor k/q'\rfloor})(kt))\to 0,
\]
as $H^1(E_i, \cO_{E_i}(kt)\otimes\fm_{\hat{y}}^{\lfloor k/q'\rfloor})=0$. 
Hence we have
 \begin{align*}
 \vol_{\widehat{Y}|E_i}(\psi^*L-tE)&\leq \limsup_{k\to\infty}\frac{h^0(E_i, \cO_{E_i}(kt)\otimes\fm_{\hat{y}}^{\lfloor k/q'\rfloor})}{k^{n-1}/(n-1)!}\\
 & =\limsup_{k\to\infty}\frac{h^0(E_i, \cO_{E_i}(kt))-\ell(\cO_{\hat{y},E_i}/\fm_{\hat{y}}^{\lfloor k/q'\rfloor})}{k^{n-1}/(n-1)!}\\
 & =\vol_{E_i}((-tE)|_{E_i})-\frac{\mult_{\hat{y}}E_i}{(q')^{n-1}}\\ & <\vol_{E_i}((-tE)|_{E_i}).
 \end{align*}
For any $j\neq i$, we have $\vol_{\widehat{Y}|E_j}(\psi^*L-tE)\leq \vol_{E_j}((-tE)|_{E_j})$. Hence we have
\begin{equation}\label{asym2}
\vol_{\widehat{Y}|E}(\psi^*L-tE)<\sum a_i\vol_{E_i}((-tE)|_{E_i})
 =-(-E)^n t^{n-1}=\mult_y Y\cdot t^{n-1}.
\end{equation}
Notice that the inequality above works for all $t\in(0,1]\cap\bQ$. Since $\vol_{\widehat{Y}|E}(\psi^*L-tE)$ is continuous in $t$, \eqref{asym1} and \eqref{asym2} implies that
\[
\vol_{\widehat{Y}}(\psi^*L-E)>\vol_{\widehat{Y}}(\psi^*L)-n\int_0^1\mult_y Y\cdot t^{n-1} = \vol_{Y}(L) - \mult_y Y.
\]
Thus we have for $k\gg 1,$
\begin{align*}
h^1(Y,L^{\otimes k}\otimes\fm_y^k)&\ge h^1(Y,L^{\otimes k}\otimes \psi_*(\mathcal{O}_{\widehat{Y}}(-kE)) )\\
 & = h^1(\widehat{Y},\psi^*L^{\otimes k}\otimes \mathcal{O}_{\widehat{Y}}(-kE)) \\
 & =(\mult_y Y-\vol_{Y}(L)+\vol_{\widehat{Y}}(\psi^*L-E))\frac{k^n}{n!}+O(k^{n-1})\\
 &\ge \epsilon k^n.
 \end{align*}
Hence we finish the proof.
\end{proof}

\begin{lem}\label{hvolstrict}
 Let $\phi:(Y,y)\to (X,x)$ be a birational morphism of normal varieties with $y\in\Ex(\phi)$. Then for any valuation $v\in\Val_{Y,y}$ satisfying Izumi's inequality, that is, there exists $c_1\ge c_2>0$ such that $\frac{1}{c_2}\cdot {\ord_y}\ge v \ge \frac{1}{c_1}\cdot {\ord_y}$, we have $\vol_{X,x}(\phi_*v)<\vol_{Y,y}(v)$.
\end{lem}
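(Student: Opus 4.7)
The plan is to strengthen the non-strict inequality of Lemma \ref{hvolup} to a strict one by finding an asymptotic gap of order $m^{n}$ between the two colengths $\ell(\cO_{Y,y}/\fa_{m}(v))$ and $\ell(\cO_{X,x}/\fa_{m}(\phi_{*}v))$. The gap is detected via Lemma \ref{asymcoh}: the hypothesis $y\in\Ex(\phi)$ places us in exactly the setting where an ample pull-back $L$ on a projective completion has a contracted curve through $y$.

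First I would extend $\phi$ to a projective birational morphism $\bar\phi\colon\bar{Y}\to\bar{X}$ of normal projective varieties, shrinking $\bar{X}$ near $x$ so that $\bar\phi$ is an isomorphism outside $x$. Since $y\in\Ex(\phi)$, there is a complete irreducible curve $C\subset\bar{Y}$ with $y\in C$ and $\bar\phi(C)=\{x\}$. Fix an ample Cartier divisor $H$ on $\bar{X}$ and set $L:=\bar\phi^{*}H$; then $L$ is nef and big on $\bar{Y}$ and $L\cdot C=H\cdot\bar\phi_{*}C=0$. Lemma \ref{asymcoh} applied to $(\bar{Y},L,y,C)$ yields some $\epsilon>0$ with $h^{1}(\bar{Y},L^{\otimes k}\otimes\fm_{y}^{k})\geq\epsilon k^{n}$ for $k\gg 1$.

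Next, writing $\cI_{m}\subset\cO_{\bar{Y}}$ for the ideal sheaf of $\fa_{m}(v)$ at $y$ (and $\cO_{\bar{Y}}$ elsewhere), so that $\bar\phi_{*}\cI_{m}$ cuts out $\fa_{m}(\phi_{*}v)$ at $x$, I would push the sequence $0\to\cI_{m}\to\cO_{\bar{Y}}\to\cO_{\bar{Y}}/\cI_{m}\to 0$ forward by $\bar\phi$ and take stalks at $x$ to obtain
\[
\ell(\cO_{Y,y}/\fa_{m}(v))-\ell(\cO_{X,x}/\fa_{m}(\phi_{*}v))=\ell((R^{1}\bar\phi_{*}\cI_{m})_{x})-\ell((R^{1}\bar\phi_{*}\cO_{\bar{Y}})_{x}),
\]
where the last term is a fixed constant. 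Izumi's inequality gives $\fm_{y}^{c_{1}m}\subset\fa_{m}(v)$; setting $k=\lceil c_{1}m\rceil$ and chasing the long exact cohomology of the twisted inclusion $L^{k}\otimes\fm_{y}^{k}\hookrightarrow L^{k}\otimes\cI_{m}$ (whose quotient is a skyscraper of length $O(m^{n})$) transfers the $\epsilon k^{n}$ bound of Lemma \ref{asymcoh} to a bound $h^{1}(\bar{Y},L^{k}\otimes\cI_{m})\geq\delta m^{n}+O(m^{n-1})$ for some $\delta>0$. Finally the Leray spectral sequence and projection formula, together with an asymptotic vanishing $h^{1}(\bar{X},H^{k}\otimes\bar\phi_{*}\cI_{m})=O(m^{n-1})$ for $H$ sufficiently ample, identify this $h^{1}$ with $\ell((R^{1}\bar\phi_{*}\cI_{m})_{x})$ up to lower-order error, yielding the desired asymptotic gap.

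The main obstacle is keeping the leading-order gap positive through the Izumi sandwich: the residual constant is of the form $\epsilon c_{1}^{n}n!-(c_{1}^{n}\mult_{y}Y-\vol(v))$, which is automatically positive for $v=\ord_{y}$ (where $c_{1}=1$ and $\vol(\ord_{y})=\mult_{y}Y$) but needs work in general. A natural fix is to adapt the proof of Lemma \ref{asymcoh} directly to the graded sequence $\fa_{\bullet}(v)$, replacing the normalized blow-up of $\fm_{y}$ by the normalized blow-up of $\fa_{m}(v)$; this avoids the Izumi loss and gives the required bound on $h^{1}(\bar{Y},L^{k}\otimes\cI_{m})$ cleanly. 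A secondary but standard point is the asymptotic vanishing of $h^{1}(\bar{X},H^{k}\otimes\bar\phi_{*}\cI_{m})$, which holds once $H$ is chosen very ample relative to the Izumi constants.
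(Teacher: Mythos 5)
Your overall skeleton is the same as the paper's: projectivize, pull back an ample bundle $M$ from $X$ (so that $\phi^*M$ is nef and big on $Y$ and degree zero on a $\phi$-contracted curve through $y$), invoke Lemma \ref{asymcoh} to produce an excess $h^1$ of size $\epsilon k^n$, and convert that excess into a strict gap between the colengths on $Y$ and on $X$. The bookkeeping differs: instead of your Leray/$R^1\bar\phi_*$ computation, the paper simply compares $h^0(X,M^{\otimes k}\otimes\fb_k)$ with $h^0(Y,(\phi^*M)^{\otimes k}\otimes\fa_k)$ --- these are literally equal because $\fb_k=\phi_*\fa_k$ --- and kills all higher cohomology on the $X$ side at once using \cite[Lemma 3.9]{LM09}; that difference is essentially cosmetic, though it spares you the delicate analysis of $R^1\bar\phi_*\cO_{\bar Y}$ (which need not even have finite stalk length at $x$ as written in your displayed identity).

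The genuine gap is the one you flag yourself, and it comes from using the wrong half of the Izumi sandwich. You transfer the bound of Lemma \ref{asymcoh} from $\fm_y^{k}$ to $\fa_m(v)$ via the inclusion $\fm_y^{\lceil c_1 m\rceil}\subset\fa_m(v)$, i.e.\ via $0\to L^{\otimes k}\otimes\fm_y^{k}\to L^{\otimes k}\otimes\cI_m\to Q\to 0$ with $k=\lceil c_1 m\rceil$. The connecting map $H^0(Q)\to H^1(L^{\otimes k}\otimes\fm_y^{k})$ can have large image, so you only obtain $h^1(L^{\otimes k}\otimes\cI_m)\geq\epsilon k^n-\ell(Q)$ with $\ell(Q)\sim(c_1^n\mult_y Y-\vol(v))\,m^n/n!$, which can swamp $\epsilon c_1^n m^n$ since $\epsilon$ is not under control. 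Your proposed repair --- reworking Lemma \ref{asymcoh} with the normalized blow-up of $\fa_m(v)$ in place of that of $\fm_y$ --- is not routine (the blow-up and its exceptional divisors now vary with $m$, so the restricted-volume argument does not transfer verbatim) and is unnecessary. The correct move is to use the \emph{other} containment $\fa_k(v)\subset\fm_y^{\lfloor c_2 k\rfloor}$: here the quotient $\fm_y^{\lfloor c_2 k\rfloor}/\fa_k(v)$ is a skyscraper at $y$, whose $H^1$ vanishes, so the long exact sequence gives a \emph{surjection} $H^1(Y,(\phi^*M)^{\otimes k}\otimes\fa_k)\twoheadrightarrow H^1(Y,(\phi^*M)^{\otimes k}\otimes\fm_y^{\lfloor c_2 k\rfloor})$, and the $\epsilon k^n$ lower bound from Lemma \ref{asymcoh} passes to $\fa_k$ with no loss at all. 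This is exactly how the paper's proof proceeds.
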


\begin{proof}
 Let us take suitable projective closures of $X,Y$ such that $\phi$ extends to a birational morphism between normal projective varieties. Denote by $\fa_k:=\fa_k(v)$ and $\fb_k:=\fa_k(\phi_*v)$. Then by \cite[Lemma 3.9]{LM09} there exists an ample line bundle $M$ on $X$ such that for every $k,i>0$ we have $H^i(X,M^{\otimes k}\otimes \fb_k)=0$. Thus we have 
 \begin{equation}\label{asym3}
 \limsup_{k\to \infty}\frac{h^0(X,M^{\otimes k}\otimes \fb_k)}{k^n/n!}
 =\vol_{X}(M)-\vol_{X,x}(\phi_*v).
 \end{equation}
Since $v$ satisfies the Izumi's inequality, we know $\fm_y^{\lceil c_1 k\rceil}\subset\fa_k\subset\fm_y^{\lfloor c_2 k\rfloor}$ for $k\gg 1$ for the choice of $c_1, c_2$. Thus we have the following relations
\begin{align*}
  &  H^1(Y,(\phi^*M)^{\otimes k}\otimes \fa_k)\twoheadrightarrow H^1(Y,(\phi^*M)^{\otimes k}\otimes \fm_y^{\lfloor c_2 k\rfloor}),\\
  & H^i(Y,(\phi^*M)^{\otimes k}\otimes \fa_k)\xrightarrow[]{\cong} H^i(Y,(\phi^*M)^{\otimes k})\quad\textrm{ for any } i\geq 2.
\end{align*}
Since $h^i(Y,(\phi^*M)^{\otimes k})=O(k^{n-1})$, we have
\begin{align*}
\limsup_{k\to \infty}\frac{h^0(Y,(\phi^*M)^{\otimes k}\otimes \fa_k)}{k^n/n!}
 &=\vol_{Y}(\phi^*M)-\vol_{Y,y}(v)+\limsup_{k\to \infty}\frac{h^1(Y,(\phi^*M)^{\otimes k}\otimes \fa_k)}{k^n/n!}\\
 &\geq \vol_{X}(M)-\vol_{Y,y}(v)+\limsup_{k\to \infty}\frac{h^1(Y,(\phi^*M)^{\otimes k}\otimes \fm_y^{\lfloor c_2 k\rfloor})}{k^n/n!}
\end{align*}
By Lemma \ref{asymcoh}, there exists $\epsilon>0$ such that
$h^1(Y,(\phi^*M)^{\otimes k}\otimes \fm_y^{\lfloor c_2 k\rfloor})\geq \epsilon k^n$ for $k\gg 1$. Thus
\begin{equation}\label{asym4}
 \limsup_{k\to \infty}\frac{h^0(Y,(\phi^*M)^{\otimes k}\otimes \fa_k)}{k^n/n!}
 \geq \vol_{X}(M)-\vol_{Y,y}(v)+ n!\epsilon.
\end{equation}
Since $\fb_k=\phi_*\fa_k$, we know that the left hand sides of \eqref{asym3} and \eqref{asym4} are the same. As a result,
\[
\vol_X(M)-\vol_{X,x}(\phi_*v)\geq \vol_{X}(M)-\vol_{Y,y}(v)+ n!\epsilon
\]
and we finish the proof.
\end{proof}

\begin{cor}\label{c-strict}
 Let $\phi:(Y,y)\to (X,x)$ be a birational morphism of klt singularities such that $y\in\Ex(\phi)$.  If $K_Y\leq \phi^* K_X$, then $\hvol(x,X)<\hvol(y,Y)$.
\end{cor}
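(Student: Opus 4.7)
The plan is to reduce the corollary to Lemma \ref{hvolstrict} applied to a minimizer, combined with the log discrepancy inequality that already drove Lemma \ref{hvolup}(2).

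First, by \cite{Blu16}, there exists a valuation $v^\ast\in\Val_{Y,y}$ that achieves the infimum in the definition of $\hvol(y,Y)$; in particular $A_Y(v^\ast)<+\infty$. Since $(Y,y)$ is klt, any such valuation with finite log discrepancy satisfies Izumi's inequality centered at $y$, namely there exist constants $c_1\ge c_2>0$ with $\tfrac{1}{c_2}\ord_y \ge v^\ast \ge \tfrac{1}{c_1}\ord_y$ (this is the standard Izumi-type estimate used throughout the normalized volume literature). So $v^\ast$ satisfies the hypothesis of Lemma \ref{hvolstrict}.

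Next I would apply Lemma \ref{hvolstrict} to $v^\ast$. Because $y\in\Ex(\phi)$, the lemma gives the strict inequality
\[
\vol_{X,x}(\phi_\ast v^\ast) < \vol_{Y,y}(v^\ast).
\]
Meanwhile, the hypothesis $K_Y\le \phi^\ast K_X$ together with the standard comparison of log discrepancies under crepant-or-better birational morphisms (exactly as used in Lemma \ref{hvolup}(2)) yields
\[
A_X(\phi_\ast v^\ast) \;\le\; A_Y(v^\ast).
\]

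Finally I would combine these two inequalities to conclude. Since $\phi(y)=x$, the pushforward $\phi_\ast v^\ast$ is centered at $x$, so it is a valid candidate in the infimum defining $\hvol(x,X)$. Therefore
\[
\hvol(x,X) \;\le\; A_X(\phi_\ast v^\ast)^n\cdot\vol_{X,x}(\phi_\ast v^\ast)
\;\le\; A_Y(v^\ast)^n\cdot\vol_{X,x}(\phi_\ast v^\ast)
\;<\; A_Y(v^\ast)^n\cdot\vol_{Y,y}(v^\ast) = \hvol(y,Y),
\]
where the strict inequality comes from the volume comparison and the fact that $A_Y(v^\ast)>0$ (indeed, $A_Y(v^\ast)^n\vol_{Y,y}(v^\ast)=\hvol(y,Y)>0$ forces $A_Y(v^\ast)>0$, and $\vol_{X,x}(\phi_\ast v^\ast)>0$ holds because $\hvol(x,X)>0$ forces no valuation of finite log discrepancy to have zero volume—or else one can argue directly from $\vol_{Y,y}(v^\ast)>0$ and the strict drop).

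The main obstacle I anticipate is ensuring that the minimizer $v^\ast$ genuinely satisfies Izumi's inequality; this rests on the klt hypothesis for $(Y,y)$ and on the finiteness of $A_Y(v^\ast)$, both of which are in place. Everything else is a routine combination of the already-established Lemmas \ref{hvolup} and \ref{hvolstrict}.
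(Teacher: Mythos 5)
Your proposal is correct and follows essentially the same route as the paper: take a minimizer $v_*$ of $\hvol$ on $(Y,y)$ (which exists by \cite{Blu16} and satisfies Izumi's inequality since $A_Y(v_*)<+\infty$, cf.\ \cite[Proposition 2.3]{Li15a}), apply Lemma \ref{hvolstrict} to get the strict volume drop, and use $K_Y\leq\phi^*K_X$ to compare log discrepancies. No gaps.
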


\begin{proof}
 Let $v_*\in\Val_{Y,y}$ be a minimizer of $\hvol$ whose existence was proved in \cite{Blu16}. Then $A_Y(v_*)<+\infty$ which implies that $v_*$ satisfies Izumi's inequality by \cite[Proposition 2.3]{Li15a}. By Lemma \ref{hvolstrict} we know that $\vol_{X,x}(\phi_* v_*)<\vol_{Y,y}(v_*)$. The assumption $K_Y\leq \phi^*K_X$ implies that $A_{X}(\phi_* v_*)\leq A_{Y}(v_*)$. Hence the proof is finished.
\end{proof}

\section{ Volumes of threefold singularities}

This section is the main technical contribution of our paper. We will estimate some upper bound of the volume of three dimensional klt singularities and prove Theorem \ref{t-local}. 
We first prove that any three dimensional singular point will have volume at most 16, which is Theorem \ref{t-local}.1.
We obtain this by going through some  explicit descriptions.
Then taking the action by a cyclic group  into the account
of the analysis, we obtain Theorem \ref{t-local}.3-4. 
A similar argument will also give the proof of Theorem \ref{fundgp}. 
Note that Theorem \ref{t-local}.2 is a direct consequence of \cite[Example 7.1.1]{LX16}.


\subsection{Estimate on the local volume}

\begin{lem}\label{hypersurf}
 Let $(X,x)$ be a canonical hypersurface singularity of dimension $n$. Assume a finite group $G$ acts on $(X,x)$. Then
 \begin{equation}\label{eq_hypersurf}
 \hvol^G(x,X) \leq (n+1-\mult_x X)^n\cdot\mult_x X \leq n^n.
 \end{equation}
 In particular, $\hvol^G(x,X)\leq 2(n-1)^n$ unless $(X,x)$ is smooth.
\end{lem}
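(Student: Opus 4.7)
The plan is to test $\hvol^G$ against the maximal ideal $\fm_X := \fm_x \cO_{X,x}$, which is automatically $G$-invariant, and to bound $\mult(\fm_X)$ and $\lct(X,\fm_X)$ directly. For any $G$-invariant $\fm$-primary ideal one can produce a $G$-invariant valuation achieving its log canonical threshold -- concretely, a quasi-monomial valuation obtained by symmetrizing a log canonical place across its $G$-orbit on a $G$-equivariant log resolution -- and then Liu's inequality (the key technical ingredient in Theorem~\ref{eq-ideal}) yields
\[
\hvol^G(x,X) \;\leq\; \lct(X,\fm_X)^n \cdot \mult(\fm_X).
\]
The multiplicity equals $\mult(\fm_X) = \mult_x X =: m$ by definition, so the task reduces to proving $\lct(X,\fm_X) \leq n+1-m$ together with elementary calculus.

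For the lct bound, use that a hypersurface singularity of dimension $n$ has embedding dimension $n+1$; linearizing the $G$-action on $\fm_x/\fm_x^2$ gives a $G$-equivariant closed embedding $X \hookrightarrow (\mathbb{A}^{n+1},0)$. Let $\pi\colon Y \to \mathbb{A}^{n+1}$ be the $G$-equivariant blow-up of the origin with exceptional divisor $\tilde E$ and strict transform $\tilde X \subset Y$. From $\pi^*X = \tilde X + m\tilde E$ and adjunction on the Cartier divisor $\tilde X$,
\[
K_{\tilde X} \;=\; (\pi|_{\tilde X})^* K_X + (n-m)\,\tilde E|_{\tilde X}
\]
as $\mathbb{Q}$-divisors on $\tilde X$. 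After passing to a $G$-equivariant resolution of $\tilde X$ if needed, any prime divisor $v$ lying over $x$ with $\ord_v(\tilde E|_{\tilde X}) = c \in \mathbb{Z}_{>0}$ satisfies $\ord_v(\fm_X) = c$ and $A_X(v) = c(n-m)+1$, so
\[
\lct(X,\fm_X) \;\leq\; \frac{A_X(v)}{\ord_v(\fm_X)} \;=\; (n-m) + \frac{1}{c} \;\leq\; n+1-m,
\]
since $c \geq 1$. Combined with $\mult(\fm_X) = m$, this gives $\hvol^G(x,X) \leq (n+1-m)^n \cdot m$. Moreover, the canonicity of $(X,x)$ forces $A_X(v) \geq 1$, hence $c(n-m) \geq 0$, so $m \leq n$.

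The remaining inequalities are an elementary calculus exercise: $g(m) := (n+1-m)^n m$ has derivative $g'(m) = (n+1-m)^{n-1}\bigl((n+1) - (n+1)m\bigr)$, which vanishes only at $m=1$; hence $g$ peaks on $[1,n]$ at $m = 1$ with $g(1) = n^n$, proving $(n+1-m)^n m \leq n^n$. Since $g$ is strictly decreasing on $[1,n]$, for $m \geq 2$ (i.e., $X$ not smooth) we further obtain $g(m) \leq g(2) = 2(n-1)^n$. The only delicate point is the $G$-equivariant version of the ideal inequality used at the outset: the non-equivariant Theorem~\ref{eq-ideal} produces a valuation minimizer that may not be $G$-invariant, and one must check that Liu's estimate passes through the quasi-monomial symmetrization described above. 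This is in the same spirit as the equivariant MMP treatment sketched in Remark~\ref{r-model}.
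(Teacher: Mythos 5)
Your proposal follows essentially the same route as the paper: test $\hvol^G$ against the ($G$-invariant) maximal ideal, compute $\lct(X;\fm_x)\le n+1-\mult_xX$ by blowing up the ambient $(\bA^{n+1},o)$ and using adjunction, then finish with the calculus on $m\mapsto (n+1-m)^nm$. Two small remarks. First, your claim that a divisor $v$ over $x$ with $\ord_v(\tilde E|_{\tilde X})=c$ satisfies $A_X(v)=c(n-m)+1$ is an overstatement: the strict transform of a hypersurface need not be normal, so one must pass to its normalization, where the conductor ideal gives only $K_{\overline Y}\le \nu^*K_Y$ and hence $A_X(v)\le c(n-m)+1$ for the components of the pulled-back exceptional divisor (and the equality certainly fails for divisors extracted by further blow-ups on a resolution). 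Since you only use the upper bound, the argument survives, but the "$=$" should be "$\le$" and $v$ should be a component of the exceptional divisor of the normalized blow-up rather than an arbitrary divisor over $x$. Second, the equivariant ideal inequality $\hvol^G(x,X)\le\lct(X;\fa)^n\mult(\fa)$ for $G$-invariant $\fa$ is exactly what the paper invokes via Theorem \ref{ghvol}; you are right that this deserves a word of justification (Theorem \ref{ghvol} is stated for actions free in codimension one), and your symmetrization sketch, or the equivariant-model argument of Remark \ref{r-model}, is the appropriate fix.
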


\begin{proof}
  Fix an embedding $(X,x)\subset (\bA^{n+1},o)$, consider the blow up $\phi: \Bl_{o}\bA^{n+1}\to \bA^{n+1}$ with exceptional divisor $E$. Let $Y:=\phi_*^{-1}(X)$, then by adjunction we have that 
  $$\omega_Y\cong \phi^*\omega_X\otimes\cO_Y((n-\mult_x X)E|_{Y}).$$ Let $\nu:\overline{Y}\to Y$ be the normalization, then $\omega_{\overline{Y}}\cong \cI\cdot\nu^*\omega_Y$ where $\cI\subset\cO_{\overline{Y}}$ is the conductor ideal. As a result, we have  
  \[
  K_{\overline{Y}}\leq (\phi\circ\nu)^* K_X+(n-\mult_x X)\nu^*(E|_{Y}).
  \]
  Let $F$ be a prime divisor in $\nu^*(E|_Y)$ of coefficient $a\geq 1$. Then we have $A_X(\ord_F)\leq 1+(n-\mult_x X)a$ and $\ord_F(\fm_x)=\ord_F(\nu^*(E|_Y))=a$. Thus we have
  \[
  \lct(X;\fm_x)\leq \frac{A_X(\ord_F)}{\ord_F(\fm_x)}\leq\frac{1+(n-\mult_x X)a}{a}\leq n+1-\mult_x X.
  \]
  It is clear that the maximal ideal $\fm_x$ is $G$-invariant. 
  Hence \eqref{eq_hypersurf} follows from $\lct(X;\fm_x)\leq n+1-\mult_x X$ by Theorem \ref{ghvol}.
\end{proof}

The following structure result for strictly canonical equivariant singularities is useful to reduce the estimate of their volumes to the case of terminal singularities. 

\begin{lem}\label{maxcrep}
 Let $X$ be a variety with canonical singularities. Assume a finite group $G$ acts on $X$. Then there exists a $G$-equivariant proper birational morphism $\phi:Y\to X$ such that $Y$ is $G$-$\bQ$-factorial terminal and $K_Y=\phi^* K_X$. If moreover that $X$ is Gorenstein, then such $Y$ is Gorenstein as well. Such $Y$ will be called a \emph{$G$-equivariant maximal crepant model} over $X$.
\end{lem}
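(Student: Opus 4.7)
The plan is to construct $Y$ as a $G$-equivariant terminalization of $X$ via the minimal model program, adapting the standard non-equivariant construction. First, take a $G$-equivariant log resolution $\pi : W \to X$, which exists by equivariant Hironaka. Since $X$ is canonical, write
\[
K_W = \pi^*K_X + \sum_i a_i E_i, \qquad a_i \geq 0,
\]
where $\{E_i\}$ is the $G$-invariant collection of $\pi$-exceptional prime divisors. For any further divisorial valuation $v$ over $W$, one computes $A_X(v) = A_W(v) + v(\sum a_i E_i)$; since $W$ is smooth (hence terminal) we have $A_W(v) \geq 2$, and $v(\sum a_i E_i) \geq 0$, so $a(v; X) \geq 1 > 0$. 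Consequently, every crepant divisorial valuation over $X$ (i.e.\ one with $a(\cdot ; X) = 0$) already appears among the $E_i$ (with $a_i = 0$); in particular the set of crepant divisors over $X$ is finite and $G$-invariant.

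Next, run a $G$-equivariant $K_W$-MMP over $X$. Setting $F := K_W - \pi^*K_X = \sum_i a_i E_i$, which is $\pi$-exceptional and effective, any $K_W$-negative extremal ray $R$ over $X$ satisfies $F \cdot R < 0$, so some curve in $R$ is contained in the support of $\{E_i : a_i > 0\}$. In particular, no crepant $E_j$ is contracted during the MMP, and flips only alter their strict transforms. By a $G$-equivariant version of BCHM termination, obtained, for instance, by a small $G$-invariant ample perturbation of $K_W$ that makes every extremal ray $G$-invariant, the program ends with a $G$-$\bQ$-factorial model $\phi : Y \to X$ on which $\phi_*F$ is $\phi$-nef. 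Since $\phi_*F$ is effective and $\phi$-exceptional, the negativity lemma forces $\phi_*F = 0$, so $K_Y = \phi^*K_X$; that is, $\phi$ is crepant.

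To verify that $Y$ is terminal, let $v$ be a divisorial valuation whose center on $Y$ has codimension $\geq 2$. If $a(v; X) = 0$, then by the first paragraph $v$ is already extracted on $W$ as some $E_j$ with $a_j = 0$, and by the second paragraph $E_j$ survives on $Y$ as a divisor, contradicting that the center of $v$ on $Y$ has codimension $\geq 2$. Hence $a(v; X) > 0$, and by crepancy $a(v; Y) = a(v; X) > 0$, so $Y$ is terminal. Finally, if $X$ is Gorenstein then $K_X$ is Cartier, so the pullback $\phi^*K_X$ is a Cartier divisor on the normal variety $Y$, and therefore $K_Y \sim \phi^*K_X$ is Cartier; hence $Y$ is Gorenstein as well.

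The principal technical subtlety is the $G$-equivariance of the MMP: for a finite group $G$ one arranges that all extremal contractions and flips respect the $G$-action by running the MMP with respect to a $G$-invariant ample perturbation of $K_W$ so that $G$-orbits of extremal rays are contracted simultaneously, and then invoking the equivariant form of BCHM termination. The other potentially delicate point, namely ensuring that the output $Y$ is \emph{terminal} rather than merely canonical, is precisely what is secured by starting from a log resolution on which all crepant valuations are already realized as divisors.
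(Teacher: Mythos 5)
Your argument is correct and follows essentially the same route as the paper, which simply cites \cite[Proposition 2.36]{KM98} for the finiteness of crepant divisors and \cite{BCHM10} (or \cite[Corollary 1.38]{Kol13}) for extracting exactly those via a $G$-equivariant MMP; you have unpacked the proofs of those two citations (resolution plus relative $K$-MMP with the negativity lemma). The only imprecision is the phrase about an ample perturbation ``making every extremal ray $G$-invariant'' --- the standard mechanism is to contract the extremal face spanned by a $G$-orbit of extremal rays, which you in effect also say --- and this does not affect the validity of the proof.
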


\begin{proof}
 By \cite[Proposition 2.36]{KM98}, there are only finitely many crepant exceptional divisors over $X$. Then the lemma is an easy consequence of \cite{BCHM10} (see also \cite[Corollary 1.38]{Kol13}) by extracting all crepant exceptial divisors over $X$ using $G$-MMP.
\end{proof}

We need a lemma on del Pezzo surfaces.
\begin{lem}\label{l-normal}
If $E$ is a normal Gorenstein surface such that $-K_E$ is ample, then either $E$ is a generalized cone over an elliptic curve or $E$ only has rational double points. 
\end{lem}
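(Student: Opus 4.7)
The plan is a case analysis on the minimal resolution. Let $\pi\colon\tilde E\to E$ be the minimal resolution; since $E$ is Gorenstein, $K_E$ is Cartier, so we may write
\[
 K_{\tilde E}\;=\;\pi^*K_E+\sum_{i} a_iE_i
\]
with $\{E_i\}$ the exceptional prime divisors and integer discrepancies $a_i$. Intersecting with any $E_j$ yields $\sum_i a_i(E_i\cdot E_j)=K_{\tilde E}\cdot E_j\ge 0$ on the minimal resolution (no $(-1)$-curves). The matrix $(E_i\cdot E_j)$ is negative definite with non-negative off-diagonal entries, so its inverse has non-positive entries; this forces $a_i\le 0$ for every $i$.

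If $a_i=0$ for all $i$, then each singularity of $E$ is canonical Gorenstein, i.e.\ a rational double point, placing $E$ in the second alternative. So suppose some $a_i<0$. I would then invoke the classical classification of non-canonical normal Gorenstein surface singularities to argue that $E$ has a unique non-canonical singularity $p$, simple elliptic with exceptional divisor a smooth elliptic curve $E_0\subset\tilde E$ with $a_0=-1$, while all other $E_i$ over $p$ are $(-2)$-curves attached as Du Val trails, and every remaining singularity of $E$ is a rational double point. The ampleness of $-K_E$ is crucial here: combining $-\pi^*K_E\cdot E_i\ge 0$ for every exceptional component, adjunction $(K_{\tilde E}+E_i)\cdot E_i=2p_a(E_i)-2$, the fact that $(-K_E)^2>0$, and the Hodge index theorem rules out cusp cycles, configurations whose fundamental cycle has arithmetic genus $\ge 2$, and the coexistence of more than one non-canonical singular point.

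Finally, to identify $E$ as a generalized cone over $E_0$, I would first contract the Du Val trails on $\tilde E$ to obtain a smooth surface $\tilde E'$ on which $E_0$ persists as a smooth elliptic curve of negative self-intersection, and then verify numerically that $-K_{\tilde E'}-E_0$ is pulled back from $E_0$. This forces $\tilde E'\to E_0$ to be a $\bP^1$-bundle in which $E_0$ is the unique negative section; contracting this section recovers $E$ as the projective cone over $(E_0,-K_E|_{E_0})$. The main obstacle is the middle step: the ruling-out of cusp-type and higher-genus configurations, and the uniqueness of the non-canonical point, is the delicate part, where one must marry the detailed Gorenstein surface-singularity classification with the global intersection-theoretic constraints coming from the ampleness of $-K_E$.
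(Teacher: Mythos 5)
The paper does not prove this lemma at all: its ``proof'' is the single line ``See \cite[Theorem 1]{Bre80}'', i.e.\ it is quoted from Brenton's classification of normal Gorenstein surfaces with ample anticanonical bundle (see also Hidaka--Watanabe). Your proposal instead attempts a self-contained proof, and its first stage is correct and complete: on the minimal resolution $\pi\colon\tilde E\to E$ one has $K_{\tilde E}\cdot E_j\ge 0$ for every exceptional curve (no $(-1)$-curves), and since the intersection matrix is negative definite with non-negative off-diagonal entries, the discrepancy divisor $\sum a_iE_i$ is anti-effective, so all $a_i\le 0$; if all $a_i=0$ every singularity is canonical Gorenstein, hence a rational double point.

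The gap is that the entire remaining content of the lemma is deferred rather than proved. When some $a_i<0$ you write that you ``would invoke the classical classification'' to conclude that $E$ has a \emph{unique} non-canonical point, that it is \emph{simple elliptic} (exceptional divisor a smooth elliptic curve with discrepancy $-1$), and that cusp cycles and higher-genus configurations are excluded --- and you yourself flag this as ``the delicate part.'' But this exclusion is exactly the theorem being cited: a normal Gorenstein surface singularity with $a_i\le -1$ is a priori any minimally elliptic singularity (simple elliptic, cusp, or worse), or even non-log-canonical, and ruling these out requires a genuine argument combining $\pi^*(-K_E)\cdot E_i=0$, $(-K_E)^2>0$, the Hodge index theorem, and adjunction on the fundamental cycle; none of these computations are carried out. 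Likewise the final identification of $E$ as a generalized cone (that $\tilde E'$ is a $\bP^1$-bundle over $E_0$ with $E_0$ a negative section, including the low-degree cases $(-K_E)^2\le 2$ where ``generalized cone'' is not a projective cone in the naive sense) is asserted numerically but not verified. As written, the proposal is a correct roadmap for reproving Brenton's theorem, not a proof; for the purposes of this paper the honest course is to do what the authors do and cite \cite[Theorem 1]{Bre80}, or else to fill in the elliptic-singularity case analysis in full.
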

\begin{proof}See \cite[Theorem 1]{Bre80}.
\end{proof}

Next we will prove the inequality part of Theorem \ref{t-local}.1.
Let $x\in X$ be a three dimensional non-smooth klt singularity.
Denote the index $1$ cover of $x\in X$ by $\tilde{x}\in\tilde{X}$.
If $\tilde{x}\in \tilde{X}$ is smooth, then $x\in X$ is a quotient singularity
which implies $\hvol(x,X)\leq \frac{27}{2}<16$.
Thus we may assume that $\tilde{x}\in \tilde{X}$ is singular.
Then Theorem \ref{ghvol}.2 implies that $\hvol(x,X)\leq \hvol(\tilde{x},\tilde{X})$.
Hence it suffices to prove the inequality part of Theorem \ref{t-local}.1
under the assumption that $x\in X$ is Gorenstein.
This is shown in the the following result.

\begin{prop}\label{l-weak}
 Let $(X,x)$ be a Gorenstein canonical singularity of dimension $3$. Then $\hvol(x,X)\leq 16$ unless $(X,x)$ is smooth.
\end{prop}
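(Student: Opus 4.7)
The plan is to induct on $N=N(X,x)$, the number of prime divisors over $x$ with vanishing discrepancy (\emph{crepant divisors}), transferring bounds across birational models via Lemma~\ref{hvolup}.

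\textbf{Base case $N=0$.} Then $(X,x)$ is Gorenstein terminal in dimension three, which by the classical Reid--Mori classification is a compound Du Val hypersurface singularity with $\mult_xX=2$. Lemma~\ref{hypersurf} with $n=3$ then yields $\hvol(x,X)\le(4-2)^3\cdot 2=16$.

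\textbf{Inductive step $N\ge 1$.} I would pick a crepant prime divisor $E$ over $x$ and invoke a standard MMP argument (cf.\ Lemma~\ref{maxcrep} and \cite{BCHM10}) to extract it as a $\bQ$-factorial crepant divisorial contraction $g\colon Y_1\to X$, so that $E\subset Y_1$ is the sole exceptional divisor, $K_{Y_1}=g^*K_X$, and $-E$ is $g$-ample. Gorensteinness of $X$ together with crepancy of $g$ makes $Y_1$ Gorenstein canonical. If some $y\in g^{-1}(x)$ is singular, then the crepant divisors over $y\in Y_1$ correspond, via crepancy, to the crepant divisors over $x\in X$ centered at $y$; since $E$ itself drops out of this count, $N(Y_1,y)\le N(X,x)-1$, and the inductive hypothesis gives $\hvol(y,Y_1)\le 16$. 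Lemma~\ref{hvolup}(2) then yields $\hvol(x,X)\le\hvol(y,Y_1)\le 16$.

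If instead $Y_1$ is smooth along $g^{-1}(x)$, I would bound $\hvol(x,X)$ directly through the divisorial valuation $\ord_E$. Adjunction and crepancy give $K_E=(K_{Y_1}+E)|_E=E|_E$, and because $-E$ is $g$-ample, $-K_E=-E|_E$ is ample; thus $E$ is a Gorenstein del Pezzo surface. Lemma~\ref{l-normal} leaves two alternatives: $E$ has only rational double points, or $E$ is a generalized cone over an elliptic curve. The latter is ruled out by arranging $E$ to be of Koll\'ar-component type, so that $(Y_1,E)$ is plt (this can be done via Reid's classification of Gorenstein canonical threefold singularities): an elliptic cone vertex on $E$ would supply a log canonical center of $(Y_1,E)$ yielding an extra crepant divisor over $x$ through that vertex, contradicting how $E$ was chosen. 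Hence $E$ is a Gorenstein del Pezzo with at worst rational double points, so $1\le(-K_E)^2\le 9$, and with $A_X(\ord_E)=1$ and $\vol(\ord_E)=(-E|_E)^2=(-K_E)^2$ we conclude
\[
\hvol(x,X)\le\hvol(\ord_E)=A_X(\ord_E)^3\cdot\vol(\ord_E)\le 9\le 16.
\]

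\textbf{Main obstacle.} The essential difficulty lies in the smooth crepant resolution case above: arranging that the crepant extraction is of plt (Koll\'ar component) type, and excluding the elliptic cone alternative from Lemma~\ref{l-normal}. Both steps rely on the detailed classification of three-dimensional canonical singularities due to Reid, and this is precisely why the present estimate is hard to generalize to higher dimensions.
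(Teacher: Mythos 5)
Your skeleton — induction on the number of crepant divisors, reducing to the Gorenstein terminal (cDV) case via Lemma~\ref{hypersurf} and otherwise to a crepant extraction of a single irreducible divisor $E$ with $Y_1$ smooth along $E$ — matches the paper's strategy. The base case and the inductive transfer via Lemma~\ref{hvolup} are fine. But the final step has a genuine gap: you invoke Lemma~\ref{l-normal}, which applies only to \emph{normal} Gorenstein del Pezzo surfaces, whereas in this situation $E$ is merely a reduced irreducible Gorenstein del Pezzo surface and can perfectly well be non-normal. The non-normal case is not a corner case to be waved away; it is where the entire technical content of the proposition lies. By Reid's classification, a non-normal $E$ can be a cone over a nodal/cuspidal rational curve, a projected Veronese, or a linear projection of a scroll $F_{m-2;1}$ or $F_{m-4;2}$, and in the scroll cases the degree $m=(-K_E)^2$ is a priori unbounded as an abstract surface. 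The paper pins it down to $m=7$ only by a delicate computation of the tangent map $\Phi$ along the non-normal locus $C=\nu(A)=\nu(B)$ (resp.\ $\nu(B)$), using $\deg\cN_{C/Y}=-2$ (crepancy) together with $\deg\cN_{A/\overline{E}},\deg\cN_{B/\overline{E}}$ and the length of $\mathrm{coker}(\Phi)$ at the pinch/node points. Without that (or the alternative valuation $\ord_l$ along a general ruling $l$, for which $\cN_{l/Y}^{\vee}\cong\cO_l\oplus\cO_l(2)$ gives $\hvol(\ord_l)\le 16$), the bound $(-K_E)^2\le 16$ is simply not established.

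A secondary error: your exclusion of the elliptic-cone alternative by "arranging $E$ to be of Koll\'ar-component type" is not justified and is in fact backwards — if one could always arrange $(Y_1,E)$ to be plt, then $E$ would automatically be normal and the whole difficulty above would evaporate; one cannot. Fortunately the elliptic-cone case needs no exclusion: since $Y_1$ is smooth at the vertex $y$, one has $(-K_E)^2=\edim(E,y)\le\edim(Y_1,y)=3$, which is how the paper handles it. So to complete your argument you must (i) replace the plt trick by this embedding-dimension bound, and (ii) add the full analysis of the non-normal Gorenstein del Pezzo case.
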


\begin{proof}
Let us first recall a Gorenstein terminal three dimensional singularity is a compound Du Val (or cDV) singularity (see \cite[Definition 5.32 and Theorem 5.34]{KM98}). In particular, it is a hypersurface singularity.  

 Thus we may assume that $(X,x)$ is not a cDV singularity, otherwise we could conclude by Lemma \ref{hypersurf}. Let $\phi_1:Y_1\to X$ be a maximal crepant model constructed in Lemma \ref{maxcrep}. From \cite[Theorem 5.35]{KM98} we know that there exists a crepant $\phi_1$-exceptional divisor $E\subset Y_1$ over $x$. Let us run $(Y_1,\epsilon E)$-MMP over $X$ for $0<\epsilon\ll 1$. By \cite[1.35]{Kol13} this MMP will terminate as $Y_1\dashrightarrow Y\to Y'$, where $Y_1\dashrightarrow Y$ is the composition of a sequence of flips, and $g:Y\to Y'$ contracts the birational transform of $E$, which we also denote by $E$ as abuse of notation. It is clear that $Y$ is a maximal crepant model over $X$ as well, so it is Gorenstein terminal and $\bQ$-factorial. If $\dim g(E)=1$, then the generic point of $g(E)$ is a codimension two point on $Y'$ with a crepant resolution, thus $Y'$ has non-isolated cDV singularities along general points in $g(E)$ (see e.g. \cite[Theorem 4.20]{KM98}). Hence for a general point $y'\in g(E)$, Lemma \ref{hypersurf} and \ref{hvolup} imply that 
 $$\hvol(x,X)\leq\hvol(y',Y')\leq 16$$ and we are done.
 
 The only case left is when $g(E)=y'$ is a point. 
 If $Y$ has a singularity $y$ along $E$, then since it is terminal Gorenstein, we know 
 $$\hvol(y', Y')\le \hvol(y, Y)\le 16.$$
 So we can assume $Y$ is smooth along $E$. In particular, the divisor $E$ is Cartier in $Y$, so $E$ is Gorenstein. Since $(-E)$ is $g$-ample, $K_E=(K_Y+E)|_E=E|_E$ and $A_{Y'}(\ord_E)=1$, it is clear that $E$ is a reduced irreducible Gorenstein del Pezzo surface, and 
 $$\hvol(y',Y')\leq \hvol_{y',Y'}(\ord_E)=(E^3)=(-K_E)^2=:m.$$ 
 
 
 Firstly, we treat the case when $E$ is normal. By Lemma \ref{l-normal}, $E$ is either a cone over an elliptic curve or $E$ only has rational double point. 
 In the first case, let $y$ be the cone point, then 
 $$m=(-K_E)^2=\edim(E,y)\leq\edim(Y,y)= 3,$$
 where $\edim(E,y)$ and $\edim(Y,y)$ denote the embedding dimension of the singularities. 
 In the latter case, 
 $$m=(-K_{E})^2\leq 9.$$

Now the only case left is when $E$ is non-normal. Since $E$ is a  reduced irreducible non-normal Gorenstein del Pezzo surfaces, Reid's classification \cite[Theorem 1.1]{Rei94} tells us that $E$ is one of the following:
\begin{itemize}
\item The degree $m=(-K_E)^2$ is $1$ or $2$, such $E$ is classified by \cite[1.4]{Rei94};
\item Cone over a  nodal/cuspidal rational curve, then the assumption that $\edim(E,y)\le 3$ implies $m\le 3$;
\item A linear projection of Veronese $\bP^2\subset\bP^5$, then $m=(-K_E)^2=4$;
\item A linear projection of $F_{m-2;1}$ by identifying
a fiber with the negative section;
\item A linear projection of $F_{m-4;2}$ by identifying 
the negative section to itself via an involution.
\end{itemize}

We recall the definition of $F_{a;k}$ in \cite{Rei94}.
For an integer $a\geq 0$, denote the $a$-th Hirzebruch 
surface by $F_a$ with $A$ and $B$ being its fiber and negative
section, respectively. Then for any integer $k>0$,
the complete linear system $|(a+k)A+B|$ induces
an embedding of $F_a$ into $\bP^{a+2k+1}$. 
We denote the image by $F_{a;k}$. It is clear that 
$F_{a;k}$ is a surface of degree $a+2k$ in $\bP^{a+2k+1}$,
with negative section $B$ of degree $k$.

In the first three cases, we have $m\leq 4$ and we are done.

 To proceed, we will show that  in the last two cases indeed 
 $m=7$ in both cases.
 Hence $\hvol(y',Y')\leq \hvol_{y',Y'}(\ord_E)=m=7$.
 
 In the fourth case, $E$ is a linear projection of $F_{m-2;1}$, so it is obtained by gluing a fiber
 $A$ with the negative curve $B$ from its normalization
 $\overline{E}\cong F_{m-2}$. Denote by $\tau\colon  A\to B$ the gluing morphism and $\nu:\overline{E}\to E$
 the normalization map. Let $C$ be the reduced curve in $E$ whose
 support is $\nu(A)=\nu(B)$.  By \cite[2.1]{Rei94}
 we have $\cO_E=\ker(\nu_*\cO_{\overline{E}}\to(\nu|_{A\cup B})_*\cO_{A\cup B}/\cO_C)$.
 Here $A\cup B$ is a reduced closed subscheme of $E$.
 Let us pick an analytic local coordinate
 $(u_1,u_2)$ on $\overline{E}$ with origin at $\bar{y}:=A\cap B$
 such that $A=(u_1=0)$ and $B=(u_2=0)$. Denote by $y:=\nu(\bar{y})$.
 After taking completions, we get
 \[
  \widehat{\cO_{\overline{E},\bar{y}}}=\bC[[u_1,u_2]],
  \quad\widehat{\cO_{A\cup B,\bar{y}}}=\bC[[u_1,u_2]]/(u_1 u_2),
  \quad\widehat{\cO_{C,y}}=\bC[[u_1+u_2, u_1u_2]]/(u_1 u_2).
 \]
 Hence $\widehat{\cO_{E,y}}=\bC[[u_1+u_2,u_1 u_2, u_1^2 u_2]]$.
 Denote by $(x_1,x_2,x_3):=(u_1+u_2,u_1u_2, u_1^2u_2)$, then 
 \[
  \widehat{\cO_{E,y}}\cong\bC[[x_1,x_2,x_3]]/(x_3^2+x_2^3-x_1x_2x_3).
 \]
 Since $Y$ is smooth along $E$, we may assume that $(x_1,x_2,x_3)$
 is a local analytic coordinate of $Y$ at $y$.
 Consider the following map:
 \[
  \Phi: (\cT_{\overline{E}}\otimes\cO_{A})\oplus \tau^*(\cT_{\overline{E}}\otimes\cO_B)
  \to (\nu|_{A})^*(\cT_Y\otimes\cO_C).
 \]
 The map $\Phi$ is induced by the tangent map of $\nu:\overline{E}\to Y$.
 Since $\cT_{\overline{E}}=\langle\partial_{u_1},\partial_{u_2}\rangle$,
 $(x_1,x_2,x_3)=(u_1+u_2,u_1u_2, u_1^2u_2)$ and $C=(x_2=x_3=0)$, 
 we have
 \[
  \Phi(\cT_{\overline{E}}\otimes\cO_{A})=\langle\partial_{x_1},\partial_{x_1}+x_1\partial_{x_2}\rangle,
  \quad\Phi(\tau^*(\cT_{\overline{E}}\otimes\cO_{B}))=\langle\partial_{x_1},\partial_{x_1}+x_1\partial_{x_2}+x_1^2\partial_{x_3}\rangle. 
 \]
 Thus $\mathrm{Im}(\Phi)=\langle\partial_{x_1},x_1\partial_{x_2},x_1^2\partial_{x_3}\rangle$.
 This implies that the cokernel of $\Phi$ near $\bar{y}$ is a
 skyscraper sheaf $\cG$ supported at $\bar{y}$ with stalk isomorphic to $\bC^3$.
 Therefore, we have an exact sequence
 \[
  0\to \cT_C\to(\cT_{\overline{E}}\otimes\cO_{A})\oplus(\tau|_A)^*(\cT_{\overline{E}}\otimes\cO_B)
  \xrightarrow{\Phi}(\nu|_{A})^*(\cT_Y\otimes\cO_C)\to \cG\to 0.
 \]
 Taking degrees of the above exact sequence, we get
 \[
  \deg\cT_C+\deg\cT_Y\otimes\cO_C=\deg\cT_{\overline{E}}\otimes\cO_{A}
  +\deg\cT_{\overline{E}}\otimes\cO_{B}+3
 \]
 which implies $\deg\cN_{C/Y}=\deg\cN_{A/\overline{E}}+\deg\cN_{B/\overline{E}}+3$.
 Since $Y$ is crepant over $Y'$, we know $(K_Y\cdot C)=0$
 which implies $\deg\cN_{C/Y}=-2$. We also know that 
 $\deg\cN_{A/\overline{E}}=0$ and $\deg\cN_{B/\overline{E}}=-(m-2)$.
 Hence $-2=-(m-2)+3$ which means $m=7$.
 
 In the fifth case, $E$ is obtained by gluing the negative section
 via a non-trivial involution $\tau: B\to B$ from $\overline{E}\cong F_{m-4}$.
 Denote by $C:=\nu(B)$. Let $\bar{y}$  be a $\tau$-fixed point in $B$. 
 By exactly the same computation
 in the proof of Lemma \ref{l-equivirr}, we can choose
 a local analytic coordinate $(u_1,u_2)$ of $\overline{E}$
 at $\bar{y}$ such that $B=(u_1=0)$ and $\tau:B\to B$ is given
 by $\tau^*(u_2)=-u_2$. We may also find a local analytic
 coordinate $(x_1,x_2,x_3)$ of $Y$ at $y:=\nu(\bar{y})$
 such that $\nu:\overline{E}\to E\hookrightarrow Y$ 
 has the local expression $\nu(u_1,u_2)=(u_2^2,u_1,u_1u_2)$.
 Consider the following map:
 \[
  \Phi:(\cT_{\overline{E}}\otimes\cO_B)\oplus\tau^*(\cT_{\overline{E}}\otimes\cO_B)
  \to(\nu|_{B})^*(\cT_{Y}\otimes\cO_C).
 \]
 The map $\Phi$ is induced by the tangent map of $\nu:\overline{E}\to Y$.
 Since $\cT_{\overline{E}}=\langle\partial_{u_1},\partial_{u_2}\rangle$,
 $(x_1,x_2,x_3)=(u_2^2,u_1,u_1 u_2)$ and $C=(x_2=x_3=0)$,
 we have 
 \[
  \Phi(\cT_{\overline{E}}\otimes\cO_B)=\langle\partial_{x_2}+u_2\partial_{x_3},
  2u_2\partial_{x_1}\rangle,\quad
  \Phi(\tau^*(\cT_{\overline{E}}\otimes\cO_B))=\langle\partial_{x_2}-u_2\partial_{x_3},
  -2u_2\partial_{x_1}\rangle.
 \]
 Thus $\mathrm{Im}(\Phi)=\langle u_2\partial_{x_1}, \partial_{x_2}, u_2\partial_{x_3}\rangle$.
 Denote by $\cG$ the cokernel of $\Phi$. Then 
 $\cG$ restricting to a neighborhood of $\bar{y}$
 is a skyscraper sheaf $\cG$ with stalk isomorphic to $\bC^2$.
 Since there are two $\tau$-fixed points in $B$,
 the total length of $\cG$ on $B$ is $4$. Thus we obtain
 the following exact sequence:
 \[
  0\to\cT_B\to(\cT_{\overline{E}}\otimes\cO_B)\oplus\tau^*(\cT_{\overline{E}}\otimes\cO_B)
  \xrightarrow{\Phi}(\nu|_{B})^*(\cT_{Y}\otimes\cO_C)\to\cG\to 0.
 \]
 Taking degrees of the above exact sequence, we get
 \[
  \deg\cT_B+\deg\cT_Y\otimes\cO_C=2\deg\cT_{\overline{E}}\otimes\cO_{B}
  +4
 \]
 which implies $\deg\cN_{C/Y}=2\deg\cN_{B/\overline{E}}+4$.
 As before,  $\deg\cN_{C/Y}=-2$ and
 $\deg\cN_{B/\overline{E}}=-(m-4)$, hence $-2=-2(m-4)+4$ which means $m=7$.
\end{proof}
\medskip

\noindent {\bf Complement of the proof:} In the last two cases, we will provide the second type of valuations which have normalized volume at most 16. A similar study will play an essential role in the latter argument of Theorem \ref{t-local}.4  for the equivariant case, because there the above argument seems to be not enough to produce an {\it equivariant} valuation with volume at most 16, therefore we need to find a new candidate  (see the proof of Lemma \ref{l-equivirr}).

Let $\nu:\overline{E}\to E$ be the normalization map.
It is clear that $E$ is nodal in codimension $1$, hence $\nu$ is unramified away from finitely many points on $\overline{E}$. Let $\bar{l}$ be a general fiber of the Hirzebruch surface $\overline{E}$. Denote by $l:=\nu_*\bar{l}$. 

We will show that $\hvol_{Y',y'}(\ord_l)\leq 16$. Since we can choose $\bar{l}$ sufficiently general to miss the finite points where $\nu$ is ramified, we may assume that $\nu$ is unramified along $\bar{l}$, which implies that $\Omega_{\overline{E}/E}\otimes\cO_{\bar{l}}=0$.
From the following exact sequence
\[
(\nu^*\Omega_{E})\otimes\cO_{\bar{l}}\to\Omega_{\overline{E}}\otimes\cO_{\bar{l}}\to \Omega_{\overline{E}/E}\otimes\cO_{\bar{l}}=0,
\]
we have a sequence of surjections after applying $\nu_*$:
\begin{equation}\label{normbdl}
 \Omega_Y\otimes\cO_{l}\twoheadrightarrow\Omega_{E}\otimes\cO_{l}\twoheadrightarrow\nu_*(\Omega_{\overline{E}}\otimes\cO_{\bar{l}})\twoheadrightarrow\nu_*\Omega_{\bar{l}}=\Omega_{l}.
\end{equation}
Since  $Y$ is smooth along $l$, the conormal sheaf $\cN_{l/Y}^{\vee}$ is a vector bundle. 
It is clear that 
$$\ker(\Omega_{\overline{E}}\otimes\cO_{\bar{l}}\to\nu_*\Omega_{\bar{l}})=\cN_{\bar{l}/\overline{E}}^{\vee}\cong\cO_{\bar{l}},$$ hence the sequence \eqref{normbdl} gives us a surjection $\cN_{l/Y}^{\vee}\twoheadrightarrow\cO_{l}$. Since $\omega_{Y}\otimes\cO_{l}\cong\cO_{l}$, we know that $\deg\cN_{l/Y}^{\vee}=2$. Hence we have an exact sequence
\[
 0\to\cO_{l}(2)\to\cN_{l/Y}^{\vee}\to\cO_{l}\to 0
\]
which splits because $\Ext^1(\cO_{l},\cO_l(2))=0$. Hence $\cN_{l/Y}^{\vee}\cong\cO_l\oplus\cO_l(2)$. Let $\cI_l$ be the ideal sheaf of $l$ in $Y$. Then we have an injection $(\fa_k/\fa_{k+1})(\ord_{l})\hookrightarrow H^0(l,\cI_l^k/\cI_l^{k+1})$ where $\fa_k(\ord_{l})$ is the $k$-th valuative ideal of $\ord_l$ in $\cO_{Y',y'}$. Since 
\[
\cI_l^k/\cI_l^{k+1}\cong\Sym^k\cN_{l/Y}^{\vee}\cong\cO_l\oplus\cO_l(2)\oplus\cdots\oplus\cO_l(2k),
\]
we know that $h^0(l,\cI_l^k/\cI_l^{k+1})=k^2+O(k)$. Hence $\ell(\cO_{Y',y'}/\fa_k(\ord_l))\leq k^3/3+O(k^2)$ which implies that $\vol_{Y',y'}(\ord_l)\leq 2$. Since $A_{Y'}(\ord_l)=2$, we have $$\hvol(y',Y')\leq\hvol_{Y',y'}(\ord_l)=8\cdot \vol_{Y',y'}(\ord_l)\leq 16$$ and the proof is finished.

 

\bigskip

Next,  we treat the equality case of Theorem \ref{t-local}.1.
\begin{prop}A three dimensional klt singularity $x\in X$ has $\hvol(x,X)=16$ if and only if $x\in X$ is an $A_1$ singularity.
\end{prop}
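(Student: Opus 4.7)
The ``if'' direction is a direct computation. For the ordinary double point $X=\{xy-zw=0\}\subset\bA^4$, the blowup $\phi\colon Y\to X$ at the origin is a divisorial resolution with $Y$ smooth and exceptional divisor $E\cong\bP^1\times\bP^1$. Direct calculation yields $A_X(\ord_E)=2$ and $\vol_{X,x}(\ord_E)=\mult_x X=2$, hence $\hvol_{X,x}(\ord_E)=16$. Since $\bP^1\times\bP^1$ is K-polystable (carrying a product K\"ahler--Einstein metric), the criterion of \cite{Li15a, LX16} that a K-semistable Koll\'ar component realizes the infimum of $\hvol$ shows $\ord_E$ attains $\hvol(x,X)$, giving $\hvol(x,X)=16$.

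For the ``only if'' direction, suppose $\hvol(x,X)=16$. The plan is to revisit the three reductions used in the proof of Proposition \ref{l-weak} and show each inequality there becomes strict unless $X$ is already $A_1$.

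\textbf{Step 1 (reduce to Gorenstein).} Let $\tilde\pi\colon(\tilde X,\tilde x)\to(X,x)$ be the index-one cover with Galois group $G$. If $|G|\ge 2$, Theorem \ref{ghvol}.2 gives the strict inequality $\hvol(x,X)<\hvol(\tilde x,\tilde X)$. When $\tilde X$ is singular Gorenstein klt, Proposition \ref{l-weak} bounds $\hvol(\tilde x,\tilde X)\le 16$; when $\tilde X$ is smooth, Theorem \ref{t-local}.2 gives $\hvol(x,X)=27/|G|\ne 16$. Either case contradicts $\hvol(x,X)=16$, forcing $G$ trivial and $X$ Gorenstein.

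\textbf{Step 2 (reduce to cDV).} Assume $X$ is Gorenstein canonical but not terminal. Following the MMP construction in the proof of Proposition \ref{l-weak}, extract all crepant divisors over $X$ via a maximal crepant model and run a relative MMP to contract one, yielding a crepant birational morphism $\phi\colon Y'\to X$ with $K_{Y'}=\phi^*K_X$ together with a point $y'\in\Ex(\phi)$ satisfying $\hvol(y',Y')\le 16$ (via Lemma \ref{hypersurf} at a generic cDV point of a $1$-dimensional contracted image, or via the del Pezzo valuations constructed in Proposition \ref{l-weak}'s proof when the image is a point). Since $K_{Y'}\le\phi^*K_X$, Corollary \ref{c-strict} yields the \emph{strict} inequality $\hvol(x,X)<\hvol(y',Y')\le 16$, contradiction. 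Hence $X$ is terminal, i.e., cDV.

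\textbf{Step 3 (cDV forces $A_1$).} Write $X=\{f=0\}\subset\bA^4$ with $\mult_x f=2$, and blow up $x$ to obtain $\phi\colon Y\to X$ whose exceptional divisor $E\subset\bP^3$ is the projectivized tangent cone, a quadric defined by $f_2$. As in the proof of Lemma \ref{hypersurf}, one computes $A_X(\ord_E)=2$ and $\vol_{X,x}(\ord_E)=\mult_x X=2$, so $\hvol_{X,x}(\ord_E)=16$. The assumption $\hvol(x,X)=16$ then forces $\ord_E$ to be a divisorial minimizer of $\hvol$. Invoking the characterization of divisorial minimizers as Koll\'ar components with K-semistable associated Fano (\cite{Li15a, LX16}), $E$ must be a K-semistable Fano surface. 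Among quadrics in $\bP^3$, only the smooth quadric $\bP^1\times\bP^1$ is K-semistable: the quadric cone $\cong\bP(1,1,2)$ is a toric Fano with nonzero Futaki invariant, and reducible or non-reduced quadrics fail to be normal. Therefore $f_2$ has full rank $4$, and the holomorphic Morse lemma concludes that $X$ is analytically isomorphic to $A_1$ near $x$.

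The principal obstacle lies in Step 3, which rests on the K-stability characterization of divisorial minimizers. An alternative route avoiding this would use Reid's classification of cDV singularities and, for each non-$A_1$ type, explicitly produce a weighted-blowup Koll\'ar component whose normalized volume is strictly less than $16$.
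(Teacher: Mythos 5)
Your overall strategy matches the paper's: reduce to the Gorenstein case via the index-one cover and Theorem \ref{ghvol}.2, reduce to cDV via the crepant-model analysis, and then analyze the blowup of a cDV point. Step 1 and the ``if'' direction are fine (the paper leaves the ``if'' direction implicit; your computation with the K-semistable Koll\'ar component $E\cong\bP^1\times\bP^1$ is the standard one). However, Steps 2 and 3 both have gaps.

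In Step 2, Corollary \ref{c-strict} only applies when there is a \emph{nontrivial} intermediate model, i.e.\ when $Y'\neq X$ so that $y'\in\Ex(\phi)$ actually exists. If $X$ has a single crepant divisor $E$ over $x$ which the MMP contracts directly back to $x$ (so $Y'=X$, $y'=x$), your invocation of \ref{c-strict} is vacuous. The conclusion still holds, but for a different reason: in that situation the proof of Proposition \ref{l-weak} gives a valuation over $x$ itself with normalized volume $\le 9$ (normal $E$) or $=7$ resp.\ $\le 4$ (non-normal $E$), unless $Y$ is singular along $E$, in which case \ref{c-strict} applies to $Y\to X$. This is exactly the case the paper spends effort on (via the curves $l\subset E$ and uniqueness of divisorial minimizers), so you should make the case distinction explicit.

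The more serious gap is Step 3, as you suspected. Your computation $A_X(\ord_E)=2$, $\vol(\ord_E)=\mult_xX=2$, hence $\hvol(\ord_E)=16$, presupposes that the exceptional divisor of $\Bl_xX$ is a \emph{reduced irreducible} quadric, i.e.\ that $f_2$ has rank $\ge 3$. For a non-$cA$ point ($f_2=x_1^2$, rank $1$, e.g.\ $cD$ and $cE$ types) the exceptional divisor is a double plane, $Y$ need not be normal, and $\ord_{E_{\mathrm{red}}}$ does not have normalized volume $16$, so the ``forced minimizer'' argument never gets started; the paper disposes of these with the weighted blowup of weights $(3,2,2,2)$, which gives volume at most $27/4$. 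For $cA_k$ points with $k\ge 2$ and rank-two quadratic part (e.g.\ $x_1x_2+x_3^3+x_4^3$) the exceptional divisor $(x_1x_2=0)\subset\bP^3$ is reducible, so ``$\ord_E$'' is not a single divisorial valuation and the K-semistability criterion for Koll\'ar components cannot be applied to it; here the paper instead uses that the blowup model has $\vol(Y/X)=16$ but is not a plt blowup, so by \cite[Theorem C.2]{LX16} there is a Koll\'ar component with volume strictly less than $16$. Your K-semistability argument is a genuinely nice alternative in the rank-$3$ case: showing $\bP(1,1,2)$ is not K-semistable (nonvanishing Futaki invariant) handles all $A_k$ with $k\ge 2$ at once, whereas the paper quotes the explicit value $125/9$ for $A_2$ and uses \ref{c-strict} for $k\ge 3$. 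But as written your Step 3 does not rule out the rank-$\le 2$ cDV points, so the proof is incomplete.
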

\begin{proof}Let $\tilde{x}\in \tilde{X}$ be the index $1$ cover of $x\in X$, then by Proposition \ref{l-weak} if $\hvol(\tilde{x},\tilde{X})>16$, then it is a smooth point, so 
$\hvol(x,X)=27/|G|$ for some nontrivial $G$ and we get a contradiction.
So by
$$16\ge \hvol(\tilde{x},\tilde{X}) $$
and Theorem \ref{ghvol}, we indeed know $(\tilde{x},\tilde{X})=(x,X)$. By the proof of Proposition \ref{l-weak}, we see that $x\in X$ is either a cDV point or it has a crepant resolution which extracts a non-normal del Pezzo surface $E$ and a blow up of a curve $l\subset E$ yields a valuation with normalized volume  at most 16. 

Firstly let us consider that $x\in X$ is a cDV point. If $x\in X$ is not of cA type, then it is locally given by an equation $(x_1^2+f(x_2,x_3,x_4)=0)$ where $\ord_0 f\geq 3$. Then the model $Y'\to X$ given by the normalized weighted blow up of $(3,2,2,2)$ satisfies $\vol(Y'/X)\leq\frac{27}{4}$. In fact, by definition $Y'$ is the normalization of a variety $Y^*\subset W:={\rm Bl}_{(3,2,2,2)}{\mathbb{C}^4}$. The exceptional set $Y^*/X$ is a degree 6 hypersurface $F:=Y^*\cap \mathbb{P}(3,2,2,2)$. An application of the adjunction formula says
$(K_{Y^*}+F)\sim_{\mathbb{Q},X} 3F.$
Let $H$ denote the class of $\mathcal{O}(1)$ on $\mathbb{P}(3,2,2,2)$. As the normalization will only possibly decrease the volume, we know that
$$\vol(Y'/X)\le 3^3\cdot (H|_{F})^3=\frac{27\times 6}{3\cdot 2 \cdot 2 \cdot 2}=\frac{27}{4}<16.$$

Hence we may assume that $x\in X$ is of $cA$-type. Let $Y\to X$ be its blow up of $x$ with exceptional divisor $E$. Since $x\in X$ is of $cA$-type, we know that $E$ is reduced which implies $Y$ is normal by Serre's criterion. If $Y$ is singular along $E$ at some point $y$, then by Corollary \ref{c-strict}, 
$$\hvol(x,X)<\hvol(y,Y)\le 16.$$ Therefore, $Y$ is smooth. Furthermore, if the model $Y\to X$ does not give a Koll\'ar component, then by \cite[Theorem C.2 and its proof]{LX16}, we know there is a Koll\'ar component $S$, such that 
$$\hvol_{X,x}(\ord_S)<\vol(Y/X)=16.$$ Thus $Y\to X$ has to be a Koll\'ar component. This implies that $x\in X$ is of $A_k$ type since otherwise the exceptional divisor of $Y/X$ is given by $(x_1^2+x_2^2=0)\subset \mathbb{P}^3$, which is clearly a contradiction. In fact, as $Y$ is smooth, it is either $A_1$ or $A_2$. For $A_2$ singularity, it is known that its volume is $\frac{125}{9}$ computed on the valuation $v$ induced by the weighted blow up of $(3,3,3,2)$. In fact, the three dimensional $A_2$ singularity is labelled as the case $(n,k)=(3,3)$ in \cite[Example 5.3]{Li15a}, where $v$ is  proposed to be minimizer and $\hvol(v)$ is explicitly calculated. Then \cite[Example 4.7]{LL19} confirms that  $v$ is indeed the minimizer in this case.

Now consider the case that $Y\to X$ extracts a non-normal del Pezzo surface $E$. We want to show that $\hvol(x,X)<16$. From the proof of Proposition \ref{l-weak} we know that $\hvol_{X,x}(\ord_l)\leq 16$ for a general line $l\subset E$. Since a minimizing divisorial valuation of $\hvol$ is unique if exists by \cite[Theorem B]{LX16}, we conclude that either $\hvol_{X,x}(\ord_l)<16$ or $\hvol_{X,x}(\ord_l)=16$ for any general line $l$ but then these valuations $\ord_l$ can not be a minimizer of $\hvol_{X,x}$. Therefore, $\hvol(x,X)<16$ and we finish the proof.
\end{proof}


\subsection{Equivariant estimate}

We have proved Theorem \ref{t-local}.1. Theorem \ref{t-local}.2 follows from \cite[7.1.1]{LX16}. In this section, we aim to prove Theorem \ref{t-local}.3-4. 
We need the following result from \cite{HX09}.
\begin{prop}\label{p-HX}Let $G$ be a finite group and $({x}\in {X})$ a $G$-invariant klt singularity. Then for any $G$-birational model $f\colon Y\to X$, there will be a $G$-invariant (irreducible) rationally connected subvariety $W\subset f^{-1}(x)$. (In our notation, a point is rationally connected.)
\end{prop}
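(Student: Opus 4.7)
The strategy is to combine $G$-equivariant resolution with rational chain connectedness of fibers over klt singularities (Hacon--McKernan), and then extract a $G$-invariant irreducible rationally connected piece via an equivariant fixed-point argument.

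First, I would replace $Y$ by a $G$-equivariant log resolution $\pi\colon Y'\to Y$. Because the image under $\pi$ of a $G$-invariant irreducible rationally connected subvariety of $(f\pi)^{-1}(x)$ remains $G$-invariant and rationally connected (or a point) in $f^{-1}(x)$, it is enough to prove the statement when $Y$ is smooth and $f^{-1}(x)$ has simple normal crossings support. At this stage the Hacon--McKernan theorem, applied to the projective birational morphism $f$ from smooth $Y$ to the klt base $X$, shows that $F:=f^{-1}(x)$ is rationally chain connected.

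To produce a $G$-invariant irreducible RC subvariety inside $F$, I would consider the set $\mathcal{F}$ of maximal irreducible rationally connected subvarieties of $F$, on which $G$ acts by permutation. If some element of $\mathcal{F}$ is already $G$-fixed, it serves as $W$. Otherwise, for any $G$-orbit $\{W_{0},\dots,W_{m}\}$ in $\mathcal{F}$, I would use the rational chain connectedness of $F$ to collect chains of rational curves joining the $W_{i}$; their union is a $G$-invariant proper RCC subvariety of $F$ whose MRC complexity is strictly smaller. Iterating this descent terminates, producing either a $G$-fixed positive-dimensional irreducible RC subvariety, or, in the extreme case, a $G$-fixed closed point -- which still qualifies as rationally connected in the convention adopted here.

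The main obstacle is the final step: guaranteeing termination of the descent and, in particular, the existence of a $G$-fixed element at the bottom. In the intended applications $G$ is cyclic, and then a classical fixed-point argument for solvable group actions on proper RCC varieties (via the Lefschetz fixed point theorem applied to a $G$-equivariant resolution, using $h^{i,0}=0$ for rationally connected smooth varieties) suffices. For general finite $G$, one would need to argue more carefully, for instance by passing to Sylow subgroups or via Borel-type equivariant cohomology, in order to extract the desired $G$-invariant irreducible rationally connected subvariety.
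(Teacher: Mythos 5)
Your scaffolding --- pass to a $G$-equivariant log resolution, push forward, and invoke Hacon--McKernan to see that the fiber $F=f^{-1}(x)$ is rationally chain connected --- is sound, and it is indeed how one would begin if one wanted to prove the statement from first principles. But the step you flag as "the main obstacle" is not a technical loose end: it is the entire content of the result. Producing a $G$-invariant \emph{irreducible rationally connected} subvariety inside a $G$-invariant proper rationally chain connected variety is precisely the theorem of Hogadi--Xu, and the paper's proof of Proposition \ref{p-HX} consists of nothing more than citing \cite{HX09} and observing that their argument (stated there for a Galois group acting on the geometric fiber of a degeneration) never uses that the group is Galois, hence applies to any finite group. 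So where the paper quotes, you attempt to reprove, and the reproof is incomplete exactly at the quoted step.

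Concretely, the descent does not close. If no maximal irreducible RC subvariety is $G$-fixed, the union of a $G$-orbit $\{W_0,\dots,W_m\}$ together with connecting chains of rational curves is a $G$-invariant, \emph{reducible} RCC subvariety, and extracting a $G$-invariant irreducible RC piece from it is the same problem you started with; you give no well-defined decreasing invariant ("MRC complexity" is not defined), so termination is unproved. Your fallback via the holomorphic Lefschetz fixed point theorem proves a different and strictly stronger statement --- existence of a $G$-\emph{fixed point} --- which is only available for cyclic (or suitably solvable) $G$ and is in general false: $S_3$ acting on $\mathbb{P}^1$ through its irreducible two-dimensional representation has no fixed point, although of course $\mathbb{P}^1$ itself is the required invariant RC subvariety. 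This matters for the paper, because Proposition \ref{p-HX} is applied with an arbitrary finite group $G$ in the proof of Proposition \ref{l-fundgp} (the bound on local fundamental groups), not only with cyclic $G$ as in Lemma \ref{gorindex}. To repair your argument you would need to reproduce the actual induction of \cite{HX09}, which works with the MRC fibration of (a resolution of) each irreducible component of $F$ and descends along the quotient rather than along fixed points.
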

\begin{proof}In \cite{HX09}, this is shown when $G$ is a Galois group. But the proof does not use any specific property of a Galois group, hence works for any finite group. 
\end{proof}

\begin{lem}\label{gorindex}
 Let $(X,x)$ be a Gorenstein canonical threefold singularity. Assume a finite cyclic group $G$ acts on $(X,x)$.
 Then $\hvol^G(x,X)\leq 27$ with equality if and only if $(X,x)$ is smooth.
\end{lem}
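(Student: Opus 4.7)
The plan is to handle the smooth case directly via the maximal-ideal blow-up, and to treat the non-smooth case by reducing to a Gorenstein terminal (compound Du Val) situation on a $G$-equivariant maximal crepant model, then applying the strict volume comparison of Corollary~\ref{c-strict} at a $G$-fixed point. If $(X,x)$ is smooth, $\fm_x$ is $G$-invariant and the ordinary blow-up of $x$ is $G$-equivariant; its exceptional $\bP^2$ is a $G$-invariant Koll\'ar component with $A_X=3$ and $\vol=1$, giving normalized volume $27$. Thus $\hvol^G(x,X)\le 27$; combining with the obvious inequality $\hvol^G(x,X)\ge\hvol(x,X)=27$ from Theorem~\ref{t-hvolbound} yields the equality case.

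From now on assume $(X,x)$ is non-smooth, and aim at the strict bound $\hvol^G(x,X)<27$. If $(X,x)$ is cDV, then it is a multiplicity-$2$ hypersurface in $\bA^4$ and Lemma~\ref{hypersurf} gives the stronger bound $\hvol^G(x,X)\le(4-2)^3\cdot 2=16$. Otherwise $(X,x)$ is strictly canonical, and Lemma~\ref{maxcrep} produces a non-trivial $G$-equivariant maximal crepant model $\phi\colon Y\to X$ with $K_Y=\phi^*K_X$ and $Y$ Gorenstein $G$-$\bQ$-factorial terminal. By Proposition~\ref{p-HX} the fiber $\phi^{-1}(x)$ contains a $G$-invariant irreducible rationally connected subvariety $W$; taking a $G$-equivariant resolution $\tilde{W}\to W$ and invoking the holomorphic Lefschetz fixed-point theorem (which applies because $\tilde{W}$ is smooth projective and rationally connected, forcing $L_{\mathrm{hol}}(g)=\chi(\tilde{W},\cO_{\tilde{W}})=1$ for every $g\in G$) produces a $G$-fixed closed point on $\tilde{W}$ whose image is a $G$-fixed closed point $y\in W\subset\phi^{-1}(x)$. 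Since $Y$ is Gorenstein terminal, $y$ is either a smooth point of $Y$ (so $\hvol^G(y,Y)=27$ by the smooth case above) or a cDV hypersurface point (so $\hvol^G(y,Y)\le 16$ by Lemma~\ref{hypersurf}); in either case $\hvol^G(y,Y)\le 27$. The $G$-equivariant analogue of Corollary~\ref{c-strict}, obtained by rerunning the proofs of Lemma~\ref{hvolstrict} and Corollary~\ref{c-strict} starting from a $G$-invariant minimizer at $y$, then yields
\[
\hvol^G(x,X)<\hvol^G(y,Y)\le 27.
\]

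The principal obstacle is producing the $G$-fixed closed point $y\in\phi^{-1}(x)$: Proposition~\ref{p-HX} alone only furnishes a $G$-invariant rationally connected subvariety, and descending from such a subvariety to an actual closed fixed point uses the cyclicity of $G$ in an essential way, via the holomorphic Lefschetz fixed-point theorem, for which rational connectedness of $\tilde{W}$ guarantees a nonzero Lefschetz number. A secondary, essentially routine point is the $G$-equivariant upgrade of Corollary~\ref{c-strict}: it follows verbatim from the same proof once a $G$-invariant minimizing valuation on $(Y,y)$ is exhibited, and such a valuation can be built from a $G$-invariant Koll\'ar component produced by the equivariant plt blow-up construction.
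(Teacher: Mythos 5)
Your proof is correct and follows essentially the same route as the paper's: a $G$-equivariant maximal crepant model, Proposition \ref{p-HX} combined with the holomorphic Lefschetz fixed point theorem (using cyclicity of $G$) to produce a $G$-fixed point $y$ on the terminal model, the cDV classification of Gorenstein terminal threefold singularities, and Lemma \ref{hypersurf}. The only organizational difference is that you prove the strict inequality directly in the non-smooth case via an equivariant version of Corollary \ref{c-strict}, whereas the paper first establishes $\hvol^G(x,X)\le 27$ via Lemma \ref{hvolup} and then treats equality separately; note that the equivariant strictness is only genuinely needed when $(Y,y)$ is smooth, where the $G$-invariant minimizer is explicitly $\ord_{\fm_y}$, so your appeal to a general $G$-invariant minimizer (which the paper also implicitly relies on) is harmless.
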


\begin{proof}
Let $\phi:Y\to X$ be a $G$-equivariant maximal crepant model constructed in Lemma \ref{maxcrep}. By Proposition \ref{p-HX}, there exists a $G$-invariant rationally connected closed subvariety $W\subset\phi^{-1}(x)$. Let $\widetilde{W}$ be a $G$-equivariant resolution of $W$. Since $G$ is cyclic and $\widetilde{W}$ is rationally connected, the $G$-action on $\widetilde{W}$ has a fixed point $\tilde{y}$ by holomorphic Lefschetz fixed point theorem. Thus the $G$-action on $W$ has a fixed point $y$ which is the image of $\tilde{y}$. Then Lemma \ref{hvolup} implies that $\hvol^G(x,X)\leq \hvol^G(y,Y)$. Since $(Y,y)$ is a Gorenstein terminal threefold singularity by Lemma \ref{maxcrep}, we know that $(Y,y)$ is a smooth point or an isolated cDV singularity by \cite[Corollary 5.38]{KM98}. Hence $\hvol^G(y,Y)\leq 27$ by Lemma \ref{hypersurf} and we are done.

If the equality holds, then by Corollary \ref{c-strict}, we know that $X=Y$. Then $X$ has to be smooth since otherwise,  $\hvol^G(x,X)\leq 16$. 
\end{proof}




\begin{proof}[Proof of Theorem \ref{t-local}.3]For singularity $x\in X$ and a torsion element of order $r$ in its class group, we can take the corresponding index $1$ cover $y\in Y$. Let $\tilde{y}\in \tilde{Y}$ be the index $1$ cover of $y\in Y$ corresponding to $K_Y$. Then there exists a quasi-\'etale Galois closure $(z\in Z)\to (x\in X)$ of the composite map $(\tilde{y}\in\tilde{Y})\to (x\in X)$ (see e.g. \cite[Theorem 3.7]{GKP16}). In particular, $z\in Z$ is Gorenstein. Denote by $G:=\Aut(Z/X)$ and $G':=\Aut(Z/Y)$. Then $G'$ is a normal subgroup of $G$ with $G/G'\cong\bZ/r$. Pick $g\in G$ whose image in $G/G'$ is a generator. Then Theorem \ref{ghvol} and Lemma \ref{gorindex} imply
\[
\hvol(x,X)=\frac{1}{|G|}\hvol^G(z,Z)\leq \frac{1}{|\langle g\rangle|}\hvol^{\langle g\rangle}(z,Z)\leq \frac{27}{r}.
\]
The proof is finished.
\end{proof}

Next we turn to the proof of Theorem \ref{t-local}.4 which follows from below.


\begin{prop}\label{p-strong}
 Let $(X,x)$ be a Gorenstein canonical threefold singularity whose general hyperplane section is an elliptic singularity, with a nontrivial $G:=\mathbb{Z}/2$-action which only fixes $x\in X$, then $\hvol^G(x,X)\le 18$.
 \end{prop}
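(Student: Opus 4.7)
The plan is to emulate the argument of Proposition~\ref{l-weak} while carrying the $\mathbb{Z}/2$-action through every step, ultimately producing a $G$-invariant valuation with normalized volume at most $18$.

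First, since the general hyperplane section of $(X,x)$ is elliptic, $(X,x)$ cannot be a cDV singularity, so \cite[Theorem 5.35]{KM98} together with Lemma~\ref{maxcrep} supply a $G$-equivariant maximal crepant model $\phi_1 \colon Y_1 \to X$ with a nontrivial $\phi_1$-exceptional crepant divisor over $x$. Pick a $G$-invariant irreducible component $E$ of that divisor (so either $E$ is $G$-stable or is the union of a $G$-orbit of two components; we treat the $G$-stable case, the other being analogous), and run a $G$-equivariant $(Y_1,\epsilon E)$-MMP over $X$. The result is a $G$-equivariant birational contraction $g \colon Y \to Y'$ contracting the strict transform of $E$, where $Y$ is Gorenstein terminal and $\mathbb{Q}$-factorial and $Y'$ is canonical. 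Since $G$ fixes $x$ and $E$ is $G$-invariant, $g(E)$ is a $G$-invariant subvariety of $Y'$, so Lemma~\ref{hvolup} will allow us to replace $X$ by $Y'$ provided we can bound $\hvol^G$ at a $G$-fixed point over $g(E)$.

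Now split on $\dim g(E)$. If $\dim g(E) = 1$, then the generic point of $g(E)$ is cA on $Y'$, and by picking a $G$-fixed point $y' \in g(E)$ (which exists since $g(E)$ is either a $G$-fixed curve or a $G$-orbit-swapped pair, reducing to a point situation on a quotient consideration) we obtain a Gorenstein cDV singularity on which Lemma~\ref{hypersurf} gives $\hvol^G(y',Y') \le 16 < 18$. If $g(E) = \{y'\}$, then $y'$ is $G$-fixed. If $Y$ has a $G$-invariant singular point $y \in E$, that point is an isolated Gorenstein cDV by \cite[Corollary 5.38]{KM98}, so Lemma~\ref{hypersurf} again gives $\hvol^G(y,Y) \le 16$, and Lemma~\ref{hvolup} finishes the case. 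Otherwise $Y$ is smooth along $E$ (any $G$-orbit of singular points can be argued away in parallel fashion), so $E$ is a Gorenstein del Pezzo surface with a $\mathbb{Z}/2$-action and $\hvol^G_{Y',y'}(\ord_E) = (-K_E)^2$. If $E$ is normal, Lemma~\ref{l-normal} shows either $E$ is a cone over an elliptic curve with $(-K_E)^2 \le 3$ or $E$ has only rational double points with $(-K_E)^2 \le 9$; either way $\ord_E$ is a $G$-invariant valuation of normalized volume at most $9$.

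The real obstacle is the non-normal case. Reid's classification \cite[Theorem 1.1]{Rei94} restricts $E$ to one of five families; the first three have $(-K_E)^2 \le 4$, so $\hvol^G_{Y',y'}(\ord_E) \le 4$ and we are done. In the last two families the proof of Proposition~\ref{l-weak} forces $m = (-K_E)^2 = 7$, and the non-equivariant bound of $16$ was obtained by passing to a general fiber $\bar l$ of the Hirzebruch surface $\bar E$ and computing the conormal sheaf $\mathcal{N}_{l/Y}^\vee \cong \mathcal{O}_l \oplus \mathcal{O}_l(2)$. The difficulty is that a \emph{general} such $\bar l$ is not $G$-invariant, so we cannot directly take $\ord_l$. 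The plan is to analyze the $\mathbb{Z}/2$-action on the ruling of $\bar E$: either the action preserves each ruling fiber setwise, in which case we can pick a $G$-invariant fiber $\bar l$ avoiding the ramification locus of $\nu \colon \bar E \to E$ and repeat the conormal bundle computation verbatim, giving $\hvol^G_{Y',y'}(\ord_l) \le 16 < 18$; or the action swaps two distinguished fibers and we must pass to the new type of valuation flagged in the outline (this is Lemma~\ref{l-equivirr}), constructed as a weighted combination/blow-up associated to a $G$-invariant curve built out of the $G$-orbit of a general fiber. In this second alternative, the log discrepancy and volume are computed by an adjunction argument analogous to the non-equivariant case, with the doubling coming from the $\mathbb{Z}/2$-symmetrization accounting for the final bound $\hvol^G_{Y',y'} \le 18$. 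This equivariant construction of the new valuation is where the main technical difficulty lies.
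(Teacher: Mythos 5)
Your overall architecture (equivariant maximal crepant model, equivariant MMP contracting a chosen $E$ to a point, then case analysis on the Gorenstein del Pezzo $E$) matches the paper's, but the two places where the actual work happens are exactly the places your proposal is wrong or missing. First, in the irreducible non-normal case your dichotomy is not the right one, and your account of where $18$ comes from does not work. Since $\overline{E}\not\cong F_0$ (else $m\le 4$), the lifted involution $\bar\sigma$ automatically preserves the ruling and the negative section $B$, so there is no ``swaps the ruling'' alternative; the real issue is that the $G$-invariant fibers are only the finitely many fibers over $\bar\sigma$-fixed points of the base, so you cannot choose one ``avoiding the ramification locus'' and you cannot repeat the generic conormal computation verbatim. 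The bound $18$ arises from a single invariant fiber $l$ through a point $y=\nu(\bar y)$ where the gluing involution $\tau$ also fixes $\bar y$: there $y$ is a pinch point of $E$, the computation gives $\cN_{l/Y}^{\vee}\cong\cO_l(-1)\oplus\cO_l(3)$ rather than $\cO_l\oplus\cO_l(2)$, hence $\vol(\ord_l)\le 9/4$ and $\hvol(\ord_l)\le 8\cdot\frac{9}{4}=18$. Your proposed ``$\mathbb{Z}/2$-symmetrization of the orbit of a general fiber with a doubling'' is not a construction that is known to produce a valuation with computable normalized volume here, and there is no reason it would land at $18$.

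Second, the reducible case $E=E_1+E_2$ (the two components swapped by $\sigma$) is not ``analogous'' and cannot be waved away. There $\ord_{E_1}$ is not $G$-invariant, so the paper instead bounds $\hvol^G$ by $\lct(\fb_k)^3\mult(\fb_k)\le(-K_E)^2$ with $\fb_k=g_*\cO_Y(-kE)$, and then must run through Reid's classification of \emph{reducible} non-normal Gorenstein del Pezzos, where $(-K_E)^2=2a+4$ or $2(a+4)$ can a priori be large. Ruling those out requires degree computations on normal bundles of the gluing curves (e.g.\ $-2=-2a+1$, absurd) together with an auxiliary choice (the paper's Assumption $\clubsuit$): if $\Gamma=\phi^{-1}(x)$ contains a curve or divisor fixed pointwise by $\sigma$, one must choose $E$ to contain it, and then several subcases are excluded because they would force a pointwise-fixed curve through a smooth point of $Y$ not lying in $E$. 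This is the only place the hypothesis that $\sigma$ fixes only $x\in X$ enters, and your proposal never invokes that hypothesis at all --- a clear sign the argument as sketched cannot be complete, since the statement is false without some such constraint on the fixed locus feeding into the choice of $E$.
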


\begin{proof}  Donote by $\sigma$ the non-trivial element in $G$. 
Let $\phi_1:Y_1\to X$ be a $G$-equivariant maximal crepant model of $X$ constructed in Lemma \ref{maxcrep}. 
So $Y_1$ is equivariant $\mathbb{Q}$-factorial, i.e., every $G$-invariant divisor is $\mathbb{Q}$-Cartier. By our assumption there is a divisorial part contained in $\phi_1^{-1}(x)$ which we denote by $\Gamma$. Then by running an $-\Gamma$-MMP sequence  over $X$, we can assume $-\Gamma$ is nef over $X$ which implies $\Gamma=\phi_1^{-1}(x)$.

First we make the following reduction.
\begin{lem}We may assume there is a $G(=\mathbb{Z}/2)$-equivariant maximal crepant model $\phi\colon Y\to X$ with an intermediate model $g\colon Y\to Y'$ over $ X$ such that $\rho_G(Y/Y')=1$ and $g(E)$ is a point for $E={\rm Ex}(Y/Y')$. 
\end{lem}
\begin{proof}
 By the proof of Lemma \ref{gorindex} there exists a $G$-invariant closed point $y_1\in Y_1$ over $x$.  If there exists a $G$-invariant $\phi_1$-exceptional component of $\Gamma$ containing $y_1$, we denote this divisor by $E$; otherwise there exist two $\phi_1$-exceptional components of $\Gamma$ containing $y_1$ interchanged by $\sigma$, and we denote the sum of these two divisors by $E$. 
 
 Let us run the $G$-equivariant $(Y_1,\epsilon E)$-MMP over $X$ for $0<\epsilon\ll 1$. If there is a flipping contraction $Y_1\to Y_1'$ that contracts a curve through $y_1$, then $y_1'=($the image of $y_1$ in $Y_1')$ is a $G$-invariant non-$\bQ$-factorial Gorenstein terminal singularity. Since $Y_1'$ is also crepant over $X$, we know that 
 $$\hvol^G(x,X)\leq\hvol^G(y_1', Y_1')\leq 16$$ by Lemma \ref{hvolup} and we are done. Hence we may assume that a flipping contraction $Y_1\to Y_1'$ does not contract a curve through $y_1$. Then the flip $Y_1^+$ is another $G$-equivariant maximal crepant model over $X$ containing a $G$-invariant point $y_1^+$ over $x$. By \cite[1.35]{Kol13} this MMP will terminate as $Y_1\dashrightarrow Y\to Y'$, where $Y_1\dashrightarrow Y$ is the composition of a sequence of flips whose exceptional locus does not contain $y_1$, and $g: Y\to Y'$ contracts the birational transform of $E$, which we also denote by $E$ as abuse of notation. Let $y'$ be the image of $y_1$ in  $Y'$. If $\dim g(E)=1$, then $Y'$ has non-isolated cDV singularities along $g(E)$. Since $y'\in g(E)$ is a $G$-invariant point, Lemma \ref{hypersurf} and \ref{hvolup} implies that $\hvol^G(x,X)\leq \hvol^G(y',Y')\leq 16$ and we are done.

 So the only case left is when $E$ gets contracted by $g$ to a $G$-invariant point $y'\in Y'$.  Since $Y'$ is Gorenstein and crepant over $X$, Lemma \ref{hvolup} implies that $\hvol^G(x,X)\leq \hvol^G(y',Y')$. Therefore, 
 by Lemma \ref{hvolup} it suffices to prove that $\hvol^G(y',Y')\leq 18$.   \end{proof}
 
 \bigskip
 In the above argument, we see that if the indeterminate locus of a flip or its inverse contains a fixed point of $G$, then we know  $\hvol^G(x,X)\le 16$. So we can assume that all $G$-fixed points are contained in the open locus where $Y_1$ and $Y$ are isomorphic. 

Therefore for the choice of $E$, we can make the following assumption which will be needed later.
 \begin{itemize}
 \item[$\clubsuit$]: If there are curves or  divisors in $\Gamma$ which are fixed pointwisely by $G$, we will choose $E$ containing such a curve or a divisor. 
 \end{itemize}
 
 \bigskip
 
 For the rest of the proof, we may assume $(X,x)=(Y',y')$ for simplicity.
 It is clear that $Y$ is a $G$-equivariant maximal crepant model over $X$ as well, so it is Gorenstein terminal and  $G$-$\bQ$-factorial, that is, any $G$-invariant Weil divisor is $\mathbb{Q}$-Cartier. 
Since any  $G$-invariant Weil divisor is $\mathbb{Q}$-Cartier, by local Grothendieck-Lefschetz theorem (see \cite[Section 1]{Rob76}), we know it is indeed Cartier. In particular, the divisor $E$ is Cartier in $Y$, so $E$ is Gorenstein. 
 Since $(-E)$ is $g$-ample, we have $-K_E=-(K_Y+E)|_E=(-E)|_E$ is ample. Hence $E$ is a reduced  Gorenstein del Pezzo surface.
 
 \begin{lem}\label{l-equivirr}Proposition \ref{p-strong} holds if $E$ is irreducible. 
 \end{lem}
 \begin{proof} Since $Y$ has hypersurface singularities, all local embedding dimensions of $E$ are at most $4$. If $E$ is normal, then by similar argument in the proof of Lemma \ref{l-weak} we know that 
 $$\hvol^G(y',Y')\leq\hvol_{Y',y'}(\ord_E)= (-K_E)^2\leq 9.$$ If $E$ is non-normal, then from Reid's classification \cite{Rei94} in the proof of Lemma \ref{l-weak} we know that either $m:=(-K_E)^2\leq 4$ or $E$ is one of the following:
 \begin{itemize}
     \item A linear projection of $F_{m-2;1}$ by identifying a fiber with the negative section;
\item A linear projection of $F_{m-4;2}$ by identifying the negative section to itself via an involution.
 \end{itemize}
Let $\nu:\overline{E}\to E$ be the normalization map. It is clear that $\sigma$ lifts naturally to an involution of the Hirzebruch surface $\overline{E}$ which we denote by $\bar{\sigma}$. We may assume that $\overline{E}$ is not isomorphic to $F_0$ since otherwise $m\leq 4$ and we are done. Hence the negative curve $B$ in $\overline{E}$ is $G$-invariant.

In the first case, $E$ is obtained by gluing a fiber $A$ with the negative curve $B$ from $\overline{E}\cong F_{m-2}$. Since any involution of $\bP^1$ has at least two invariant points, there exists a fixed point $\bar{y}\in B\setminus A$ of $\bar{\sigma}$. If $Y$ is singular at $y:=\nu(\bar{y})$, then we know that $\hvol(y',Y')\leq\hvol(y,Y)\leq 16$ by Lemma \ref{hypersurf} and \ref{hvolup} and we are done. Hence we may assume that $Y$ is smooth at $y$. Denote by $\bar{l}$ the fiber in $\overline{E}$ containing $\bar{y}$ and $l:=\nu_*\bar{l}$. Since $E$ is smooth along $l\setminus\{y\}$ and $E$ is Cartier in $Y$, we know that $Y$ is also smooth along $l\setminus\{y\}$. This implies that $Y$ is smooth along $l$. Besides, we know that $\nu$ is unramified along $\bar{l}$ because $y$ is a normal crossing point of $E$. Thus by similar argument in the proof of Lemma \ref{l-weak}, we have $\cN_{l/Y}^{\vee}\cong\cO_l\oplus\cO_l(2)$ and $\hvol_{Y',y'}(\ord_l)\leq 16$. Since $l$ is $G$-invariant, we have
\[
\hvol^G(y',Y')\leq\hvol_{Y',y'}(\ord_l)\leq 16
\]
and we are done.

In the second case, $E$ is obtained by gluing the negative section $B$ via a non-trivial involution $\tau:B\to B$ from $\overline{E}\cong F_{m-4}$. Let $\bar{y}\in B$ be a fixed point of $\bar{\sigma}$ and $y:=\nu(\bar{y})$. Let $\bar{l}$ be the fiber of $\bar{E}$ containing $\bar{y}$ and $l:=\nu_*\bar{l}$. We may assume that $Y$ is smooth at $y$ (hence smooth along $l$) since otherwise $\hvol^G(y,Y)\leq 16$ and we are done. 
If $\tau(\bar{y})\neq \bar{y}$, then $y$ is a normal crossing point of $E$.  By a similar argument to the previous case, we have $l$ is $G$-invariant, $\cN_{l/Y}^{\vee}\cong\cO_l\oplus\cO_l(2)$ and  $\hvol_{y',Y'}(\ord_{l})\leq 16$ so we are done. If $\tau(\bar{y})=\bar{y}$, then we want to show that $\cN_{l/Y}^{\vee}\cong\cO_l(-1)\oplus\cO_l(3)$.
Let $C$ be the reduced curve with support $\nu(B)$. Then we know that 
\begin{equation}\label{eq-reidlocal}
\cO_E=\ker\left(\nu_*\cO_{\overline{E}}\to(\nu|_{B})_*\cO_B/\cO_C\right)
\end{equation}
by \cite[2.1]{Rei94}.
Let us pick a system of parameters $(u_1,u_2)$ of the local
ring $\cO_{\overline{E},\bar{y}}$ such that the local
equation $B$ is $(u_1=0)$ and the involution $\tau:B\to B$
satisfies $\tau^{*}(u_2)=-u_2$. After taking completions,
we get
\[
 \widehat{\cO_{\overline{E},\bar{y}}}=\bC[[u_1,u_2]],\quad
 \widehat{\cO_{B,\bar{y}}}=\bC[[u_1,u_2]]/(u_1),\quad
 \widehat{\cO_{C,y}}=\bC[[u_1,u_2^2]]/(u_1). 
\]
Hence $\widehat{\cO_{E,y}}=\bC[[u_1, u_1 u_2,u_2^2]]$
by \eqref{eq-reidlocal}. Denote by $(x_1,x_2,x_3):=(u_2^2, u_1, u_1 u_2)$,
then 
\[
\widehat{\cO_{E,y}}\cong\bC[[x_1,x_2,x_3]]/(x_1 x_2^2-x_3^2). 
\]
This implies that $E$ has a pinch point at $y$. From these local
equations, we know that the scheme-theoretic fiber of $\nu$ over $y$ is isomorphic to $\Spec~\bC[x]/(x^2)$. Hence in an open neighborhood $U$ of $\bar{y}$, the sheaf $\Omega_{\overline{E}/E}|_U\cong\bC_{\bar{y}}$ is a skyscraper sheaf supported at $\bar{y}$ of
length $1$. We have the following exact sequence
\[
 \nu^*\Omega_{E}\otimes\cO_{\bar{l}}\to\Omega_{\overline{E}}\otimes
 \cO_{\bar{l}}\to\Omega_{\overline{E}/E}\otimes\cO_{\bar{l}}\to 0.
\]
Denote by $\cF:=\mathrm{image}(\Omega_{E}\otimes\cO_{l}\to\nu_*\Omega_{\overline{E}}\otimes
 \cO_{l})$, then we have the following commutative diagram
  \[
 \begin{tikzcd}
  0\arrow{r}&\cF\arrow{r}\arrow[two heads]{d} & 
  \nu_*\Omega_{\overline{E}}\otimes\cO_l\arrow{r}\arrow[two heads]{d}
  {}& \bC_{y}\arrow{r} & 0\\
 0\arrow{r}&\Omega_l \arrow{r}{\cong}& \Omega_l\arrow{r} & 0 &
 \end{tikzcd}
 \]
 where the horizontal sequences are exact.
Thus we have a short exact sequence by taking kernels of vertical maps:
\[
 0\to\ker(\cF\to\Omega_l)\to\nu_*\cN_{\bar{l}/\overline{E}}^{\vee}\to
 \bC_y\to 0.
\]
Since $\cN_{\bar{l}/\overline{E}}^{\vee}\cong\cO_{\bar{l}}$, we know that 
$\ker(\cF\to\Omega_l)\cong\cO_l(-1)$. From the following surjective sequence
\[
\Omega_{Y}\otimes\cO_l\twoheadrightarrow\Omega_E\otimes\cO_l\twoheadrightarrow\cF\twoheadrightarrow\Omega_l
\]
we know that there is a surjection
from $\cN_{l/Y}^{\vee}$ to $\cO_{l}(-1)$. Since $\deg(\cN_{l/Y}^{\vee})=2$, this
surjection splits since $\Ext^1(\cO_l(-1),\cO_l(3))=0$. Hence $\cN_{l/Y}^{\vee}\cong
\cO_l(-1)\oplus\cO_l(3)$. 
 Since 
\[
\cI_l^k/\cI_l^{k+1}\cong\Sym^k\cN_{l/Y}^{\vee}\cong\cO_l(-k)\oplus\cO_l(-k+4)\oplus\cdots\oplus\cO_l(3k),
\]
we know that $h^0(l,\cI_l^k/\cI_l^{k+1})=\frac{9}{8}k^2+O(k)$. Since there is an injection 
$$(\fa_k/\fa_{k+1})(\ord_{l})\hookrightarrow H^0(l,\cI_l^k/\cI_l^{k+1}),$$ we have $\ell(\cO_{Y',y'}/\fa_k(\ord_l))\leq \frac{3}{8}k^3+O(k^2)$ which implies  $\vol_{Y',y'}(\ord_l)\leq \frac{9}{4}$. Since $A_{Y'}(\ord_l)=2$, we have $$\hvol^G(y',Y')\leq\hvol_{Y',y'}(\ord_l)=8\cdot \vol_{Y',y'}(\ord_l)\leq 18.$$
\end{proof}

  \begin{lem}Proposition \ref{p-strong} holds when $E=E_1+E_2$  is reducible.
\end{lem}
\begin{proof} 
Denote by $\fb_k:=g_*\cO_Y(-kE)$
as an ideal of $\cO_{Y',y'}$. Since $(-E)$ is $g$-ample, for $k$ sufficiently divisible we have
\[
 \lct(\fb_k)\leq \frac{A_{Y'}(E_1)}{\ord_{E_1}(\fb_k)}=\frac{1}{k},\qquad\mult(\fb_k)=(E^3)\cdot k^3.
\]
In particular, we have 
\[
 \hvol^G(y',Y')\leq \lct(\fb_k)^3\cdot \mult(\fb_k)\leq
(E^3)=(-K_E)^2.
\]
From Reid's classification \cite{Rei94} and the fact that $\sigma$ induces an isomorphism between $(E_1,\omega_E|_{E_1})$ and $(E_2,\omega_E|_{E_2})$, either $(-K_E)^2\leq 8$ or $E$ is one of the following :
\begin{itemize}
 \item $E$ is obtained by gluing two copies of $F_{a;0}$ along their line pairs/double lines;
 \item $E$ is obtained by gluing two copies of $F_{a;1}$ 
 along their line pairs consisting of one fiber and negative section; 
 \item $E$ is obtained by gluing two copies of $F_{a;2}$ along their negative sections.
\end{itemize}

We recall the definition of $F_{a;0}$ in \cite{Rei94}. For
any integer $a>0$, the surface $F_{a;0}\subset\bP^{a+1}$ is the cone
over a rational normal curve of degree $a$. A line pair
on $F_{a;0}$ means the union of two different generators
of the cone. A double line on $F_{a;0}$ is the Weil divisor
of twice a generator. 

\medskip

In the first case, since the local embedding dimension of $E_i\cong F_{a;0}$ at the cone point is at most $4$, we have $a\leq 3$. Hence $(-K_E)^2=2a\leq 6$ and we are done.

\begin{rem} We want to remark, up to this point, we do not use the assumption that $x$ is the only fixed point of $G$ on $X$. 
\end{rem}

In the second case, denote the line pairs on $E_i$ by 
$A_i\cup B_i$ where $A_i$ and $B_i$ are fiber and negative
section, respectively. Then $E$ is obtained by gluing
these two line pairs via a gluing isomorphism $\tau:A_1\cup
B_1\to A_2\cup B_2$. Since $(-K_E)^2=2a+4$, we may assume that
$a>0$. Denote by $\nu:\overline{E}\to E$ the normalization of $E$.
Then we know that $\bar{\sigma}(B_1)=B_2$
and $\bar{\sigma}(A_1)=A_2$ with $\bar{\sigma}$ the lifting
of $\sigma$ to $\overline{E}$. 
We separate to two subcases below.

\textbf{Subcase 1}: $\tau(A_1)=B_2$ and $\tau(A_2)=B_1$.
Thus $\sigma$ interchanges $\nu(A_1)$ and $\nu(B_1)$ 
which are the irreducible components of the non-normal locus of $E$.
Hence in $E$ there is only one $\sigma$-fixed point $y$ which is the image of the intersection point of the pair of lines on $E_i$.  We may assume $y$ is smooth, since otherwise 
$\hvol^G(x,X)\le \hvol^G(y,Y)\le 16.$ Then the (common) tangent plane $\Theta$ of $E_i$ is fixed by $\sigma$, and on it $ \sigma$ interchanges  two smooth lines which are the tangent lines of the pair of lines. Thus the action is either of type $\frac{1}{2}(0,1,1)$ or $\frac{1}{2}(0,0,1)$. Therefore, there is a divisor or a curve on $Y$ that is fixed pointwisely by $G$.
By our assumption of Proposition \ref{p-strong}, it is contained in $\Gamma$ but $E$ does not contain any such curve. Thus it is contradictory to our Assumption $\clubsuit$.  

\textbf{Subcase 2}: $\tau(A_1)=A_2$ and $\tau(B_1)=B_2$.
Denote by $A:=\nu(A_1)=\nu(A_2)$ and $B:=\nu(B_1)=\nu(B_2)$.
Thus $\sigma$ preserves $A$ and $B$. If neither
$\sigma|_{A}$ nor $\sigma|_{B}$ is an
identity, then there is a $\sigma$-fixed point $z\in E$
such that $E$ is normal crossing at $z$. Since $\sigma$ 
interchanges two irreducible components of $E$, locally at $z$ the
$G$-action is of type $\frac{1}{2}(0,1,1)$ or $\frac{1}{2}(0,0,1)$.
Therefore, there is a divisor or a curve on $Y$ that is fixed
pointwisely by $G$ and contains $z$. But any such curve through $z$
is not contained in $E$ since $\sigma|_E$ only has isolated fixed point. Thus it contradicts
to our Assumption $\clubsuit$. So we may assume that 
$\sigma$ fixes $A$ or $B$. 
Assume first $\sigma|_B$ is an identity. 
By the same reason in subcase 1, we may assume that $Y$ is 
smooth along $B$.
Consider the following map:
\[
 \Phi:(\cT_{E_1}\oplus\cT_{E_2})\otimes
 \cO_{B}\to\cT_Y\otimes\cO_{B}.
\]
By the above discussion, we can pick an analytic local coordinate $(x_1,x_2,x_3)$ on $Y$ with
origin at $y=A\cap B$ such that $E_1=(x_3=0)$, $E_2=(x_3=x_1x_2)$,
$A=(x_1=x_3=0)$ and $B=(x_2=x_3=0)$. Then we have
$\cT_{E_1}\otimes\cO_B=\langle \partial_{x_1},\partial_{x_2}\rangle$
and $\cT_{E_2}\otimes\cO_B=\langle\partial_{x_1},\partial_{x_2}+x_1 \partial_{x_3}\rangle$.
Thus $\mathrm{Im}(\Phi)=\langle\partial_{x_1}, \partial_{x_2},
x_1\partial_{x_3}\rangle$. This implies that the cokernel
of $\Phi$ near $y$ is a skyscraper sheaf $\bC_{y}$. Therefore,
we have an exact sequence
\[
 0\to \cT_B\to(\cT_{E_1}\oplus\cT_{E_2})\otimes
 \cO_{B}\to\cT_Y\otimes\cO_{B}\to \bC_y\to 0.
\]
Taking degrees of the above exact sequence, we get
\[
 \deg\cT_B+\deg\cT_Y\otimes\cO_B=\deg\cT_{E_1}\otimes\cO_B+
 \deg\cT_{E_2}\otimes\cO_B+1.
\]
which implies $\deg\cN_{B/Y}=\deg\cN_{B/E_1}+\deg\cN_{B/E_2}+1$.
Since $Y$ is crepant over $X$, we know $(K_Y\cdot B)=0$ which implies $\deg\cN_{B/Y}=-2$. We also know that 
$\deg\cN_{B/E_1}=\deg\cN_{B/E_2}=-a$. Hence $-2=-2a+1$, but 
this is absurd since $a$ is an integer. We get a contradiction. Now we assume $\sigma$ fixes $A$ pointwisely. By the exactly the same calculation as above, but replacing $B$ by $A$, we know that 
 $\deg\cN_{A/Y}=\deg\cN_{A/E_1}+\deg\cN_{A/E_2}+1$, where the left hand side is  $-2$, but the right hand side is 1. This is again absurd. 
\medskip

For the third case, denote by $B:=E_1\cap E_2$ and it is invariant under $\sigma$. If $\sigma$ fixes every point on $B$, then we can assume $Y$ is smooth along $B$ as otherwise we will have $\hvol(y,Y)\le 16$ for a singular fixed point $y$ on $B$. This implies that $Y$ is smooth along $E$, as $E_i$ are smooth Cartier divisors along $E_i\setminus B$.  We have the following short exact sequence:
\[
 0\to\cT_{B}\to(\cT_{E_1}\oplus\cT_{E_2})\otimes\cO_B\to\cT_{Y}
 \otimes\cO_B\to 0.
\]
Hence we know that $\bigwedge^2\cN_{B/Y}\cong\cN_{B/E_1}\otimes\cN_{B/E_2}$.
Since $\deg\cN_{B/Y}=-2$ and $\deg\cN_{B/E_i}=-a$, we have $a=1$. Hence $(-K_E)^2=2(a+2k)=10$ and we are done. The remaining case is that $\sigma$ induces a nontrivial order 2 automorphism on $B$. Denote by $y\in B$ be a fixed point, then we can again assume it is a smooth point. Since $\sigma$ interchanges $E_1$ and $E_2$, similar as before, we know either $y\in Y$ is not smooth or $y\in Y$ is smooth but $\Gamma$ contains a curve which is pointwisely fixed by $\sigma$ but not $E$. Thus the latter case is again contradictory to our Assumption $\clubsuit$. 
\end{proof}
Thus we finish the proof of Proposition \ref{p-strong}. 
\end{proof}

\begin{proof}[Proof of Theorem \ref{t-local}.4]
Let $L$ be a divisor whose class in $\mathrm{Pic}(x\in X)$ is nontrivial torsion. Let $(\tilde{x}\in \tilde{X})$ be  the index 1 cover with respect to $L$. By Theorem \ref{t-local}.3, we can assume the index of $L$ is 2. 

If $\tilde{x}\in \tilde{X}$ is not Gorenstein, then $x\in X$ is not Gorenstein either. The index of $K_X$ is $2$ by Theorem \ref{t-local}.3. Let $\pi:(y\in Y)\to(x\in X)$ be the index $1$ cover with respect to $K_X$. If $\pi^*L$ is Cartier at $y$, then $L\sim K_X$ and hence $(\tilde{x}\in\tilde{X})\cong(y\in Y)$ is Gorenstein which is a contradiciton. So $\pi^*L$ is not Cartier at $y$, and we may replace $(x\in X,L)$ by $(y\in Y,\pi^*L)$ since $\hvol(x,X)<\hvol(y,Y)$ by Lemma \ref{ghvol}. Therefore, we can always assume that $(\tilde{x}\in\tilde{X})$ is Gorenstein.

By Lemma \ref{ghvol} it suffices to show $\hvol^G(\tilde{x},\tilde{X})\leq 18$. The covering $(\tilde{x}\in\tilde{X})$ is not smooth as we assume $(x\in X)$ is not a quotient singularity. If $(\tilde{x}\in\tilde{X})$ is cDV, then $\hvol^G(\tilde{x},\tilde{X})\le 16$ by Lemma \ref{hypersurf}.
Thus we can assume a general section of $\tilde{x}\in \tilde{X}$ is of elliptic type, then $\hvol^G(\tilde{x},\tilde{X})\le 18$ by Proposition \ref{p-strong}. Hence the proof is finished.
\end{proof}

\subsection{Effective bounds on local fundamental groups}

\begin{lem}\label{klein}
 Let $G$ be a finite group acting faithfully on $\bP^1$, then the smallest $G$-orbit has at most $12$ points, which only happens when $G=A_5$.
\end{lem}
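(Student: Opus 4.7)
The plan is to reduce to the classical Klein classification of finite subgroups of $\Aut(\bP^1)\cong\mathrm{PGL}_2(\bC)$ and then read off the smallest orbit in each case. Recall that every such finite subgroup is conjugate to one of: a cyclic group $C_n$, a dihedral group $D_n$ of order $2n$, the tetrahedral group $A_4$, the octahedral group $S_4$, or the icosahedral group $A_5$.

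To derive both the classification and the orbit data in a single sweep, I would apply Riemann--Hurwitz to the quotient $\pi\colon\bP^1\to\bP^1/G\cong\bP^1$. Letting $e_1,\dots,e_r$ denote the orders of the nontrivial stabilizers at the exceptional orbits, one obtains
\[
2-\frac{2}{|G|} \;=\; \sum_{j=1}^{r}\Bigl(1-\frac{1}{e_j}\Bigr),\qquad e_j\geq 2.
\]
Enumerating integer solutions is elementary: each summand is at least $\tfrac{1}{2}$ and the left side is strictly less than $2$, so $r\leq 3$; the case $r=2$ forces $e_1=e_2=|G|$ (cyclic); and $r=3$ with $\sum 1/e_j>1$ admits exactly the triples $(2,2,n)$ (dihedral, $|G|=2n$), $(2,3,3)$ ($|G|=12$), $(2,3,4)$ ($|G|=24$), $(2,3,5)$ ($|G|=60$).

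In each case an exceptional orbit above the $j$-th branch point has cardinality $|G|/e_j$, while every non-exceptional orbit has the maximal size $|G|$. Therefore the smallest $G$-orbit has size $|G|/e_{\max}$, which evaluates to $1$, $2$, $4$, $6$, and $12$ in the five cases respectively. This yields the bound of $12$ and pins down $A_5$ as the unique group attaining it.

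There is no serious obstacle: the only nontrivial ingredient is the Diophantine enumeration above, which is entirely classical. Alternatively, one could quote Klein's classification directly and simply read off the orbit sizes as the vertex/edge/face counts of the associated Platonic solids ($12$ vertices of the icosahedron realize the extremal case).
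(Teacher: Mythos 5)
Your proof is correct, and it rests on the same underlying fact as the paper's: the paper's entire proof is a one-line citation of Klein's classification of finite subgroups of $\mathrm{SL}(2,\bC)$, leaving the reader to extract the orbit sizes. What you add is a self-contained derivation: the Riemann--Hurwitz bookkeeping for $\bP^1\to\bP^1/G$ correctly yields the five families $C_n$, $D_n$, $A_4$, $S_4$, $A_5$ with branch data $(n,n)$, $(2,2,n)$, $(2,3,3)$, $(2,3,4)$, $(2,3,5)$, and the smallest orbit $|G|/e_{\max}$ evaluates to $1,2,4,6,12$ respectively, so the bound $12$ and the uniqueness of $A_5$ both follow. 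The only implicit steps are that $\bP^1/G\cong\bP^1$ (L\"uroth, or genus count) and that every non-exceptional orbit is free of size $|G|$, both standard; your argument is complete and arguably more informative than the paper's citation.
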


\begin{proof}
 This follows from the classification of finite subgroups of ${\rm SL}(2,\bC)$ (see
 \cite{Kle93}).
\end{proof}

\begin{prop}\label{l-fundgp}
Let $(X,x)$ be a Gorenstein canonical threefold singularity  with a finite group $G$-action. Then $\hvol^G(x,X)< 324$.
\end{prop}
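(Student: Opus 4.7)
The plan is to adapt the argument of Lemma \ref{gorindex}, where the cyclic hypothesis was used only to produce a $G$-fixed point on a rationally connected subvariety via the holomorphic Lefschetz fixed point theorem. For an arbitrary finite $G$ I cannot expect a $G$-fixed point, but I would instead aim to find a subgroup $H \leq G$ of index at most $12$ with a fixed point on a suitable $G$-equivariant crepant model, and then combine Lemma \ref{hypersurf} with Theorem \ref{ghvol}.2 to conclude
\[
 \hvol^G(x, X) \;<\; [G : H] \cdot \hvol^H(x, X) \;\leq\; 12 \cdot 27 \;=\; 324.
\]
The factor $12$ should come from Lemma \ref{klein}: the smallest orbit of any finite faithful subgroup of $\mathrm{PGL}(2, \mathbb{C})$ has at most $12$ points.

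Concretely, I would start by taking a $G$-equivariant maximal crepant model $\phi \colon Y \to X$ via Lemma \ref{maxcrep}, so that $Y$ is $G$-$\mathbb{Q}$-factorial with only smooth or isolated cDV singularities by \cite[Corollary 5.38]{KM98}. If $X$ is itself terminal, Lemma \ref{hypersurf} directly yields $\hvol^G(x, X) \leq 27 < 324$. Otherwise Proposition \ref{p-HX} produces a $G$-invariant irreducible rationally connected subvariety $W \subset \phi^{-1}(x)$, and after replacing $W$ by a $G$-equivariant resolution $\widetilde W$, the next goal is to produce a $G$-invariant rational curve $C \subset \widetilde W$. The low-dimensional cases are immediate: if $\dim W = 0$ then $W$ is a $G$-fixed point, and if $\dim W = 1$ then $W$ is itself a rational curve. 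For $\dim W \geq 2$ I would descend by iteratively intersecting $\widetilde W$ with general $G$-invariant members of a sufficiently ample $G$-linearized linear system (a $G$-equivariant Bertini-type argument), keeping rational connectedness of the intersection at each step.

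With the $G$-invariant rational curve $C$ in hand, let $K$ be the kernel of $G \to \Aut(C) = \mathrm{PGL}(2, \mathbb{C})$; then $G/K$ acts faithfully on $C \cong \mathbb{P}^1$, and by Lemma \ref{klein} there is a $(G/K)$-orbit of size at most $12$. Picking $p \in C$ in such an orbit and setting $H := G_p$ gives $[G : H] \leq 12$, and the image $q$ of $p$ on $Y$ is $H$-fixed. Since $(Y, q)$ is smooth or isolated cDV, Lemma \ref{hypersurf} gives $\hvol^H(q, Y) \leq 27$; the $H$-equivariant analogue of Lemma \ref{hvolup} (whose proof is formally identical, replacing arbitrary valuations by $H$-invariant ones) then transfers this to $\hvol^H(x, X) \leq 27$. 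Theorem \ref{ghvol}.2 closes the argument when $H \subsetneq G$; when $H = G$ the bound $\leq 27$ is already strictly below $324$.

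The main obstacle I anticipate is the $\dim W \geq 2$ case: carrying out the descent to a $G$-invariant rational curve $G$-equivariantly while preserving rational connectedness of the intermediate sections. Resolving this step is precisely what makes Lemma \ref{klein} the correct input and explains the shape of the constant $324 = 12 \cdot 27$ in the statement.
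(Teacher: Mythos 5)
Your overall architecture matches the paper's for the zero- and one-dimensional cases of $W$: equivariant maximal crepant model, Proposition \ref{p-HX}, and Lemma \ref{klein} combined with Theorem \ref{ghvol}.2 to produce the factor $12\cdot 27=324$. The problem is the case $\dim W=2$, which you yourself flag as the main obstacle, and your proposed resolution does not work. Cutting a rationally connected surface by a general member of an ample linear system does \emph{not} preserve rational connectedness: a general hyperplane section of even $\bP^2$ under the $d$-uple embedding is a curve of genus $(d-1)(d-2)/2>0$ once $d\geq 3$, and more generally a general very ample curve section of a rational surface has positive genus. There is also no reason for a general $G$-invariant member to be irreducible. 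So the ``$G$-equivariant Bertini descent keeping rational connectedness'' has no content, and you are left with no $G$-invariant rational curve to feed into Lemma \ref{klein}.

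The paper handles the surface case by a completely different mechanism: it runs the $G$-equivariant $(Y_1,\epsilon E)$-MMP over $X$ to contract the rational surface $E$ via $g\colon Y\to Y'$. If $\dim g(E)=1$, then $g(E)$ is a $G$-invariant rational curve (being the image of a rationally connected surface) along which $Y'$ has non-isolated cDV singularities, and Lemma \ref{klein} applies to it. If $g(E)$ is a point, $E$ becomes a reduced Gorenstein del Pezzo surface; when $E$ is normal one bounds $\hvol^G$ directly by $(-K_E)^2\leq 9$ via $\ord_E$, and when $E$ is non-normal, Reid's classification shows the non-normal locus $C$ of $E$ is a rational curve, which is canonically attached to $E$ and hence automatically $G$-invariant — that is where Lemma \ref{klein} is applied. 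In short, the $G$-invariant rational curve must be found as a canonically defined locus (the image of $E$, or the non-normal locus of a non-normal del Pezzo surface), not by cutting with divisors. To repair your argument you would need to replace the Bertini step with this MMP-plus-classification analysis, or find another canonical rational curve in $W$.
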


\begin{proof}
 We may assume that $(X,x)$ is not a hypersurface singularity since otherwise we are done by Lemma \ref{hypersurf}.
 Let $\phi_1: Y_1\to X$ be a $G$-equivariant maximal crepant model of $X$ constructed in Lemma \ref{maxcrep}. By Proposition \ref{p-HX}, there exists a $G$-invariant rationally connected closed subvariety $W\subset\phi_1^{-1}(x)$. If $W=\{y_1\}$ is a closed point, then Lemma \ref{hypersurf} and \ref{hvolup} imply $\hvol^G(x,X)\leq\hvol^G(y_1,Y_1)\leq 27$ and we are done. If $W$ is a rational curve, then by Lemma \ref{klein} there exists a closed point $y_1\in W$ whose $G$-orbit has at most $12$ points. Let $H\subset G$ be the group of stabilizer of $y_1$, then Lemma \ref{hypersurf} and \ref{hvolup} implies that $\hvol^H(y_1,Y_1)\leq 27$. Since $[G:H]\leq 12$, by Theorem \ref{ghvol} we know that $$\hvol^G(x,X)< 12\hvol^H(x,X)\leq 12\hvol^H(y_1,Y_1)\leq 324.$$
 
 Hence we may assume that $W=E$ is a rational surface. Let us run the $G$-equivariant $(Y_1,\epsilon E)$-MMP over $X$ for $0\leq \epsilon\ll 1$. By \cite[1.35]{Kol13} this MMP will terminate as $Y_1\dashrightarrow Y\to Y'$, where $Y_1\dashrightarrow Y$ is the composition of a sequence of flips, and $g:Y\to Y'$ contracts the birational transform of $E$ which we also denote by $E$ as abuse of notation. If $\dim g(E)=1$, then $Y'$ has non-isolated cDV singularities along $g(E)$. Since $g(E)$ is a $G$-invariant rational curve, there exists $y'\in Y'$ whose $G$-orbit has at most $12$ points by Lemma \ref{klein}. Let $H\subset G$ is the group of stabilizer of $y'$, if $H\neq G$, then we have $$\hvol^G(x,X)< 12\hvol^H(x,X)\leq 12\hvol^H(y',Y')\leq 192.$$ Otherwise, $H=G$ and we have $\hvol^G(x,X)=\hvol^H(x,X)\le \hvol^H(y',Y')\leq 27$.
 
 Now the only case left is when $g(E)=y'$ is a $G$-invariant closed point. If $E$ is normal, then by similar argument in the proof of Lemma \ref{l-weak} we know that $$\hvol^G(x,X)\leq\hvol^G(y',Y')\leq \hvol_{y',Y'}(\ord_E)=(-K_E)^2\leq 9$$ and we are done. If $E$ is non-normal, then from Reid's classification \cite{Rei94} we know that either $\hvol^G(x,X)\leq m:=(-K_E)^2\leq 4$ or $E$ is one of the following:
 \begin{itemize}
     \item A linear projection of $F_{m-2;1}$ by identifying a fiber with the negative line;
\item A linear projection of $F_{m-4;2}$ by identifying the negative conic to itself via an involution.
 \end{itemize}
 In both cases above the non-normal locus $C$ of $E$ is a rational curve. Since $C$ is $G$-invariant, Lemma \ref{klein} implies that there exists a closed point $y\in C$ whose $G$-orbit has at most $12$ points. Then we have $$\hvol^G(x,X)< 12\hvol^H(x,X)\leq 12\hvol^H(y,Y)\leq 324,$$ where $H\subset G$ is the group of stabilizer of $y$. So we finish the proof.
\end{proof}

\begin{proof}[Proof of Theorem \ref{fundgp}]
 We will show that $|\locpione(X,x)|\cdot\hvol(x,X)< 324$ (note that this gives a new proof that the algebraic fundamental group of a three dimensional klt singularity is finite (see  \cite{SW94, Xu14}). It suffices to show the inequality $$|\Aut(\tilde{X}/X)|\cdot\hvol(x,X)< 324$$
 for any finite  quasi-\'etale Galois morphism $\pi:(\widetilde{X},\tilde{x})\to(X,x)$. By taking  Galois closure of index one covering of $(\widetilde{X},\tilde{x})$, we may assume that $(\widetilde{X},\tilde{x})$ is Gorenstein. Denote by $G:=\Aut(\widetilde{X}/X)$, then Theorem \ref{ghvol} and Proposition \ref{l-fundgp} imply that $$|G|\cdot\hvol(x,X)=\hvol^G(\tilde{x},\widetilde{X})< 324.$$ Hence we have shown $|\locpione(X,x)|\cdot\hvol(x,X)< 324$. Then the proof follows from \cite[Corollary 1.4]{TX17} which asserts that $\pi_1(\Link(x\in X))$ is finite (hence isomorphic to its pro-finite completion $\locpione(X,x)$).
\end{proof}

\subsection{K-moduli of cubic threefolds as GIT}
In this section, we give a brief account on how Theorem \ref{t-local} implies Theorem \ref{t-global}. Such an argument for surface appeared in \cite{OSS16}, and  was also sketched in \cite{SS17} for cubic hypersurfaces. 

A straightforward consequence from \cite{Liu16} is the following result.
\begin{lem}\label{l-cubic}Let $X$ be a $\mathbb{Q}$-Gorenstein smoothable K-semistable Fano varieties, such that its smoothing is a cubic threefold. Then
the local volume of any point on $X$ is at least $81/8$.
\end{lem}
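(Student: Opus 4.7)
The plan is simply to combine Liu's inequality from \cite{Liu16} with the computation of the anticanonical volume of a cubic threefold. Recall that inequality states that for an $n$-dimensional K-semistable $\mathbb{Q}$-Fano variety $X$ and any closed point $x\in X$,
\[
\hvol(x,X)\cdot\left(\frac{n+1}{n}\right)^n\geq (-K_X)^n.
\]

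First I would observe that since $X$ is $\mathbb{Q}$-Gorenstein smoothable to a smooth cubic threefold $X_0\subset\mathbb{P}^4$, the anticanonical volume is preserved under the flat family: $(-K_X)^3=(-K_{X_0})^3$. For a smooth cubic threefold in $\mathbb{P}^4$, adjunction gives $-K_{X_0}=2H|_{X_0}$ where $H$ is the hyperplane class, so
\[
(-K_{X_0})^3=8\cdot (H^3\cdot X_0)=8\cdot 3=24.
\]

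Then with $n=3$ the inequality above becomes
\[
\hvol(x,X)\geq 24\cdot\left(\frac{3}{4}\right)^3=24\cdot\frac{27}{64}=\frac{81}{8},
\]
which is the desired bound. There is no real obstacle here; the lemma is essentially just plugging the numerical data of a cubic threefold into the already-established global-to-local volume inequality. The only minor point to verify is the invariance of $(-K_X)^n$ under $\mathbb{Q}$-Gorenstein degeneration, which follows from the flatness of the family and standard properties of intersection numbers in families with $\mathbb{Q}$-Cartier relative canonical divisor.
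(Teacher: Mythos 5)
Your proposal is correct and follows exactly the same route as the paper: cite the global-to-local volume inequality of \cite[Theorem 1]{Liu16}, note $(-K_X)^3=24$ (constant in the $\mathbb{Q}$-Gorenstein family), and compute $24\cdot(3/4)^3=81/8$. The paper's proof is a two-line version of precisely this argument.
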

\begin{proof} The volume of $(X,-K_X)$ is $(-K_X)^3=24$. Then by \cite[Theorem 1]{Liu16}, we know the local volume of a point $x\in X$ satisfies that
$$\hvol(x,X)\ge 24\times (3/4)^3=81/8.$$
\end{proof}
\begin{lem}\label{l-cubicdp}Let $X$ be a $\mathbb{Q}$-Gorenstein smoothable K-semistable Fano variety, such that its smoothing is a cubic threefold. Then $X$ is Gorenstein, furthermore, $-K_X=2L$ for some Cartier divisor $L$.
\end{lem}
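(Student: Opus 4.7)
The plan is to use the local volume lower bound from Lemma \ref{l-cubic} together with the classification of singularities provided by Theorem \ref{t-local} to rule out non-Gorenstein points, and then to invoke the smoothing family to extract a Weil (and ultimately Cartier) divisor $L$ satisfying $2L \sim -K_X$.

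First I show $X$ is Gorenstein. Suppose for contradiction that $x \in X$ is non-Gorenstein with Gorenstein index $r \geq 2$, so that $[K_X]$ is a torsion element of order $r$ in the local class group. By Theorem \ref{t-local}.3, $\hvol(x,X) \leq 27/r$, which combined with Lemma \ref{l-cubic}'s bound $\hvol(x,X) \geq 81/8$ forces $r = 2$. Theorem \ref{t-local}.4 then implies $x$ must be a quotient singularity (else $\hvol \leq 9 < 81/8$), and Theorem \ref{t-local}.2 combined with $\hvol = 27/|G| \geq 81/8$ forces $|G| = 2$. Among non-Gorenstein $\bZ/2$-quotients of $\bC^3$ acting freely in codimension one, the only possibility is $\frac{1}{2}(1,1,1)$.

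To eliminate this last case, I take a $\bQ$-Gorenstein smoothing $\pi \colon \cX \to B$ over a DVR with $\cX_0 = X$ and $\cX_\eta$ a smooth cubic threefold. On the smooth general fiber, the hyperplane class $H_\eta = \cO(1)$ is Cartier with $2 H_\eta \sim -K_{\cX_\eta}$. Let $\tilde{H}$ be the Zariski closure of $H_\eta$ in $\cX$, a Weil divisor. The difference $2\tilde{H} - (-K_{\cX/B})$ is a $\bQ$-Weil divisor on $\cX$ whose restriction to $\cX_\eta$ is principal, so it equals a principal divisor plus a rational multiple of the irreducible Cartier divisor $X \subset \cX$. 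Restricting to $X$ and using $\cO_{\cX}(X)|_X \cong \cO_X$, we obtain a Weil divisor $L := \tilde{H}|_X$ with $2L \sim -K_X$ in $\mathrm{Cl}(X)$. At a putative $\frac{1}{2}(1,1,1)$ point, however, $\mathrm{Cl}(x \in X) \cong \bZ/2$ has $[-K_X]$ as its nontrivial element while $2[L]_x = 0$ automatically, contradicting the global relation. Hence $X$ is Gorenstein.

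It remains to upgrade $L$ to a Cartier divisor. Now $L$ is $\bQ$-Cartier of index dividing $2$, and if $L$ fails to be Cartier at some $y \in X$ then $[L]_y$ is a nontrivial $2$-torsion class in $\mathrm{Cl}(y \in X)$. Re-running the Theorem \ref{t-local} trichotomy for this torsion class reduces us to the case $y \cong \frac{1}{2}(1,1,0)$, i.e., $A_1 \times \bC$, a Gorenstein $\bZ/2$-quotient with $2$-torsion in its local class group; this is the step I expect to be the main obstacle, since the local volume bound $\hvol = 27/2$ is not sharp enough to exclude it. My preferred route is to bypass it by invoking Kawamata--Viehweg vanishing on the klt Fano $X$ to get $H^2(X, \cO_X) = 0$, whence the relative Picard scheme $\mathrm{Pic}_{\cX/B}$ is smooth over $B$. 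Then $H_\eta$ lifts to a genuine line bundle $H$ on $\cX$, the restriction $L := H|_X$ is Cartier on $X$, and $2L \sim -K_X$ follows from the general-fiber relation together with the torsion-freeness of $\mathrm{Pic}(X)$ (which holds because $X$ is topologically simply connected, as a smoothing of a simply connected smooth cubic threefold).
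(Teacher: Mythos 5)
Your treatment of the Gorenstein part follows the paper's strategy, and your class-group parity trick for excluding $\frac{1}{2}(1,1,1)$ (the relation $2[L]_x=[-K_X]_x$ in a local class group of exponent $2$ would force $[-K_X]_x=0$) is a legitimate alternative to the paper's appeal to Schlessinger's rigidity of quotient singularities \cite{Sch71}. Two caveats there. First, Theorem \ref{t-local}.4 requires a nontrivial torsion class in $\mathrm{Pic}(x\in X)$, i.e.\ one that is Cartier on a punctured neighborhood; before invoking it for $K_X$ you must check, as the paper does, that $K_X$ is Cartier on $U\setminus\{x\}$, which uses Theorem \ref{t-local}.2 to see that nearby non-isolated singularities are of type $\frac{1}{2}(1,1,0)$ and hence Gorenstein. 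Second, restricting the relation $2\tilde H+K_{\cX/B}\sim 0$ from $\cX$ to the fiber $X$ at the level of Weil divisor classes is not automatic; the paper justifies it via the Cohen--Macaulayness of $\cO_{\cX}(\cL)$ and the $S_2$ property of its restriction.

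The genuine gap is in your final step. The vanishing $H^2(X,\cO_X)=0$ makes $\mathrm{Pic}_{\cX/B}\to B$ smooth, which lets you \emph{deform} line bundles from the special fiber into the family; it does not let you \emph{specialize} $H_\eta$ to a line bundle on $X$, because the Picard scheme is smooth but not proper, and the section over $\eta$ defined by $H_\eta$ need not extend over $0$. The failure mode is exactly what you are trying to exclude: the flat limit of a Cartier class is a priori only a Weil class. A one-dimension-lower counterexample to your mechanism: degenerate $\bP^1\times\bP^1$ to the quadric cone $Q_0\subset\bP^3$; then $H^2(Q_0,\cO_{Q_0})=0$, yet the ruling class $\cO(1,0)$ specializes to a non-Cartier $2$-torsion class at the vertex, which is the surface analogue of the $\frac{1}{2}(1,1,0)$ case you correctly identify as un-excludable by volume bounds alone. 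The paper closes this case by a local argument you are missing: at such a point $\edim(X,x)=4$, so $(x\in\cX)$ is a four-dimensional hypersurface singularity; a general hyperplane section $H$ of the total space through $x$ is then an isolated normal threefold hypersurface singularity, whose local class group is torsion-free by local Grothendieck--Lefschetz \cite{Rob76}, forcing $\cL|_{H}$, and hence $\cL$ and $L$, to be Cartier at $x$. Your closing remark about torsion-freeness of $\mathrm{Pic}(X)$ from simple-connectedness does not substitute for this, since the obstruction is local non-Cartierness of a Weil divisor, not global torsion in the Picard group.
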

\begin{proof}
We first show that $X$ is Gorenstein.
Assume to the contrary that $X$ is not Gorenstein at a point
$x\in X$. By Lemma \ref{l-cubic} and Theorem \ref{t-local}.3, we know its Cartier index is equal to 2.
If $x\in X$ is an isolated singularity, we choose a neighborhood
$U$ of $x$ such that $U\setminus\{x\}$ is smooth.
If $x\in X$ is not an isolated singularity, then
since a klt variety has only quotient singularities
in codimension $2$, we can choose a neighborhood 
$U\subset X$ of $x$ such that $U\setminus \{x\}$ has 
only non-isolated quotient singularities. Then by Lemma \ref{l-cubic}
and Theorem \ref{t-local}.2, $U\subset X$ has only quotient
singularities of type $\frac{1}{2}(1,1,0)$. Thus
in any case we have an open neighborhood $U$ of $x$ such that
$U\setminus\{x\}$ is Gorenstein. In particular, $K_X$ is 
a nontrivial torsion element in $\mathrm{Pic}(x\in X)$ since it is Cartier
in a punctured neighborhood of $x$. 
If $x\in X$ is a quotient singularity, then by Lemma \ref{l-cubic}
and Theorem \ref{t-local}.2 it can only 
be of order $2$ hence of type $\frac{1}{2}(1,1,1)$.
But this is a contradiction since $x\in X$ is not smoothable by \cite{Sch71}.
Hence $x\in X$ is not a quotient singularity, which implies
$\hvol(x,X)\leq 9$ by Theorem \ref{t-local}.4. But this contradicts
Lemma \ref{l-cubic}. As a result, $X$ has only Gorenstein 
canonical singularities. 

Let $\mathcal{X}\to C$ be a family which gives a $\mathbb{Q}$-Gorenstein deformation of $X$ to some smooth cubic threefold such that over $0\in C$, $\mathcal{X}_0\cong X$. By shrinking $C$, we can assume $\mathcal{X}^0=\mathcal{X}\times_C (C\setminus \{0\})$ is a family of smooth cubic threefolds. Then $-K_{\mathcal{X}^0}\sim_{C^0}\mathcal{O}(2)$. By taking the closure, we know $-K_{\mathcal{X}}\sim_C 2\mathcal{L}$ for some $\bQ$-Cartier integral Weil divisor $\mathcal{L}$. By inversion of adjunction (see \cite[Corollary 1.4.5]{BCHM10}), we know that $\cX$ has Gorenstein canonical singularities. We want to show that $\cL$ is in fact Cartier.

Assume to the contrary that $\cL$ is not Cartier at $x\in X$. Since $\cO_{\cX}(\cL)$ is Cohen-Macaulay by \cite[5.25]{KM98}, we know that $\cO_{\cX}(\cL)\otimes\cO_X$ is $S_2$. Hence $L:=\mathcal{L}|_X$ is a $\bQ$-Cartier integral Weil divisor on $X$ satisfying $\cO_X(L)\cong \cO_{\cX}(\cL)\otimes\cO_X$. Thus $L$ can not be Cartier at $x$ since otherwise $\cO_{\cX}(\cL)$ would be locally free at $x$.  If $x\in X$ is a non-smooth quotient singularity satisfying $\hvol(x,X)\geq 81/8$ then the index has to be 2 by Theorem \ref{t-local}.2. If moreover it is not of type $\frac{1}{2}(1,1,1)$, then it is of type $\frac{1}{2}(1,1,0)$, i.e. locally analytically defined by $(x_1^2+x_2^2+x_3^2=0)$ in $(0\in\bA^4)$.
Similarly, if $x\in X$ has a neighborhood $U$ such that  $x$ is the only non-Cartier point on $L|_U$, then Theorem \ref{t-local}.4 implies that $\hvol(x,X)\leq 9$ which contradicts Lemma \ref{l-cubic}. Therefore, $L$ is not Cartier along a curve $C\subset X$. In particular, $C$ is contained in the singular locus of $X$ and we can replace $x$ by a general point $(x'\in C\subset X)$ which is of quotient type as any general singularity along a curve on a klt threefold.
This again implies it is of type $\frac{1}{2}(1,1,0)$ by the previous argument. 

Since $\edim(X,x)=4$, we know that $(x\in \cX)$ is a hypersurface singularity as well. Let $H$ be a general hyperplane section of $\cX$ through $x$. Then $(x\in H)$ is a normal isolated hypersurface singularity. By similar arguments, there is a well-defined $\bQ$-Cartier integral Weil divisor $\cL|_{H}$ on $H$ satisfying $\cO_H(\cL|_{H})\cong\cO_{\cX}(\cL)\otimes\cO_H$. By local Grothendieck-Lefschetz theorem (see \cite{Rob76}), the local class group of $(x\in H)$ is torsion free which implies that $\cL|_{H}$ is Cartier at $x$. Hence $\cL$ is Cartier at $x$ and we get a contradiction. As a result, the Weil divisor $L=\cL|_{X}$ is  Cartier and $-K_X=2L$ by adjunction.
\end{proof}

\begin{lem}\label{l-embed}
Let $X$ be a $\mathbb{Q}$-Gorenstein smoothable K-semistable Fano varieties, such that its smoothing is a cubic threefold. Then $X$ is a cubic threefold in $\mathbb{P}^4$.
\end{lem}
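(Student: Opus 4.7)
The plan is to use the $\bQ$-Gorenstein smoothing family $\pi:\cX\to C$ together with the numerical data from Lemma \ref{l-cubicdp} to realize $X$ as the central fiber of a flat family of cubic hypersurfaces in $\bP^4$.

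I would first compute $L^3=3$ from $-K_X=2L$ and $(-K_X)^3=24$. Kawamata--Viehweg vanishing applied to the ample Cartier divisor $L$ on the klt variety $X$ (using that $L-K_X=3L$ is ample) gives $H^i(X,mL)=0$ for $i\geq 1$ and $m\geq 1$. Hence $h^0(X,mL)=\chi(X,mL)$ is invariant under the flat deformation, and matches the Hilbert function $\binom{m+4}{4}-\binom{m+1}{4}$ of a cubic threefold in $\bP^4$; in particular $h^0(X,L)=5$.

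Next I would pass to the smoothing family. The Cartier extension $\cL$ of $L$ constructed in Lemma \ref{l-cubicdp} satisfies $\cL|_{X_t}=\cO_{X_t}(1)$ on general fibers. Cohomology and base change together with the uniform vanishing $R^i\pi_*\cL=0$ for $i\geq 1$ show that $\pi_*\cL$ is locally free of rank $5$ on $C$. After shrinking $C$ and trivializing, I obtain a rational map $\Phi:\cX\dashrightarrow\bP^4_C$ over $C$ that restricts to the tautological cubic embedding on each general fiber. The scheme-theoretic closure of its image is a closed subscheme $\cY\subset\bP^4_C$ flat over $C$ of relative degree $3$, so $\cY_0\subset\bP^4$ is a cubic hypersurface.

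Finally I would compare $X$ with $\cY_0$. Using the relative ampleness of $\cL$ together with the normality of $\cX$, $\Phi$ extends to a finite surjective morphism $\Phi_0:X\to\cY_0$ with $\Phi_0^*\cO_{\cY_0}(1)=L$. The intersection identity $L^3=3=(\deg\cY_0)\cdot\deg(\Phi_0)$ forces $\deg(\Phi_0)=1$, so $\Phi_0$ is birational and realizes $X$ as the normalization of $\cY_0$. Adjunction on the hypersurface $\cY_0\subset\bP^4$ gives $\omega_{\cY_0}\cong\cO_{\cY_0}(-2)$, hence $\Phi_0^*\omega_{\cY_0}\cong\cO_X(-2L)\cong\omega_X$. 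The equality between pulled-back and intrinsic dualizing sheaves forces the conductor ideal of the finite birational map $\Phi_0$ to be trivial, so $\Phi_0$ is an isomorphism and $X$ is a cubic hypersurface in $\bP^4$.

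The main obstacle will be justifying that $\Phi$ descends to a genuine morphism on the central fiber: the vanishing and Hilbert-function arguments alone do not yield base-point-freeness of $|L|$ on $X_0$. One must use the relative projective normality of cubic threefolds in $\bP^4$ (so that the relative section ring $\bigoplus\pi_*\cL^{\otimes m}$ is generated in degree one on the general fibers) combined with flatness and semicontinuity to transfer this structure to the central fiber. The conductor triviality argument at the end is the cleanest way to avoid checking a priori that $\cY_0$ is normal.
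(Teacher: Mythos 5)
Your route is genuinely different from the paper's. The paper disposes of this lemma in one line: by Lemma \ref{l-cubicdp} the variety $X$ is Gorenstein canonical with $-K_X=2L$ for an ample Cartier divisor $L$ with $L^3=3$, hence $(X,L)$ is a del Pezzo variety of degree $3$ in Fujita's sense, and Fujita's structure theory (\cite{Fuj90}) says directly that $L$ is very ample and embeds $X$ as a cubic hypersurface in $\bP^4$. You instead try to manufacture the embedding from the smoothing family, which is a reasonable idea, and your bookkeeping ($L^3=3$, Kawamata--Viehweg vanishing, $h^0(X,mL)=\chi(X,mL)$ deformation-invariant, $h^0(L)=5$, the flat limit $\cY_0$ being a cubic form, and the degree/conductor comparison at the end) is all fine \emph{conditional on} $|L|$ defining a finite morphism on the central fiber.

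That conditional is exactly where the argument breaks, and your proposed repair does not work. You want to transfer generation in degree one of the section ring from the general fibers to the central fiber by ``flatness and semicontinuity,'' but the relevant maps $\Sym^m\pi_*\cL\to\pi_*\cL^{\otimes m}$ are maps of locally free sheaves on $C$ whose surjectivity is an \emph{open} condition on the base: the cokernel is a torsion sheaf that can perfectly well be supported at $0\in C$. Semicontinuity goes the wrong way here --- it only says the failure of surjectivity can jump up at the special point. So surjectivity on $C^\circ$ gives you nothing at $0$, and you are left without base-point-freeness of $|L|$ on $X$; without it, $\Phi_0$ is only a rational map, the identity $L^3=\deg(\Phi_0)\cdot\deg(\cY_0)$ acquires a correction term from the base locus, and the conductor argument cannot be run. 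Base-point-freeness (indeed very ampleness) of $L$ for a Gorenstein canonical Fano threefold with $-K_X=2L$ and $L^3=3$ is a genuine theorem --- it is precisely the content of Fujita's classification of degree-$3$ del Pezzo varieties that the paper cites --- and your proof needs either that input or an independent argument for it; as written the gap is real.
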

\begin{proof} By Lemma \ref{l-cubicdp}, we have that $X$
is a del Pezzo variety (see \cite[P 117]{Fuj90} for the definition) 
of degree $3$ and thus $L$ is very ample by \cite[Section 2]{Fuj90}. 
\end{proof}

\bigskip

\begin{proof}[Proof of Theorem \ref{t-global}]We know at least one smooth cubic threefold, namely the Fermat cubic threefold, admits a KE metric (see \cite{Tia87} or \cite[p 85-87]{Tia00}). Let $\mathbb{P}^{34}$ be the space parametrizing all cubic threefolds. By \cite{LWX14}, there is an artin stack $ \mathcal{M}$ containing an Zariski open set of $[\mathbb{P}^{34}/{\rm PGL}(5)]$ such that the $\mathbb{C}$-points of $\mathcal{M}$ parametrize the isomorphic classes of K-semistable Fano threefolds which can smoothed to a smooth cubic threefold. Moreover, there is a morphism $\mu\colon \mathcal{M}\to M$ which yields a good quotient morphism such that $M$ is a proper algebraic scheme whose close points parametrizes isomorphic classes of K-polystable ones.  

Lemma \ref{l-embed} then shows that all points in $\mathcal{M}$ indeed parametrize cubic threefolds. Then by a result of Paul-Tian (see \cite{Tia94} or \cite[Corollary 3.5]{OSS16}), we know they are all contained in the locus $U^{\rm ss}$ of GIT semistable cubic threefolds. Denote the GIT quotient  
$(U^{ss}\subset \mathbb{P}^{34})\to M^{\rm GIT}$.
To summarize, we obtain an morphism $g\colon \mathcal{M}\to [U^{\rm ss}/{\rm PGL}(5)]$, whose good quotient yields a morphism $h\colon M\to M^{\rm GIT}$.

We then proceed to show that the morphism $g$ is an isomorphism for which we only need to verify it is bijective on $\mathbb{C}$-points.
The injectivity follows from the modular interpretation. The surjectivity on the polystable points follow from the fact that $M$ is proper.   And this indeed implies  the surjectivity  on the semistable points since $\mathcal{M}$ consists of all cubic threefolds whose orbit closures contain K-polystable points, which are then precisely the GIT semistable points. 
  \end{proof}

\begin{proof}[Proof of Corollary \ref{c-list}] 
The list of GIT-(polystable)stable three dimensional cubics is provided by the main results in \cite[Section 1]{All03}. Since they are polystable, then the existence of KE metric follows from \cite{CDS15,Tia15}.
\end{proof}
\begin{rem}We want to remark that our proof of Theorem \ref{t-global} only uses two pieces of simple information of cubic threefolds: its volume and Picard group. So we expect such a strategy with Theorem \ref{t-local} can be used to construct many other compact K-moduli of smoothable threefolds. 
\end{rem}

\section{Discussions}

There are  lots of interesting questions on the volume of a singularity. Here we mention some of them which are related to our work. 

As we mentioned before, we expect the following to be true, whose proof would simplify and strength our result. 
\begin{conj}\label{c-finite}
Let $f\colon (x\in X)\to (y\in Y)$ be a quotient map of klt singularities by the group $G$, which is \'etale in codimension 1, then
$$\hvol(y, Y)\cdot |G|=\hvol(x, X).$$
\end{conj}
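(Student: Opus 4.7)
The plan is to reduce Conjecture \ref{c-finite} to the uniqueness of the minimizer of the normalized volume, extending the strategy of Theorem \ref{ghvol}.3 past the quasi-regular case. In the notation of Theorem \ref{ghvol}, write the cover as $(\tilde{x}\in\tilde{X})$ and the quotient as $(x\in X)$. By Theorem \ref{ghvol}.1, the conjectural identity $|G|\cdot\hvol(x,X) = \hvol(\tilde{x},\tilde{X})$ is equivalent to
\[
\hvol(\tilde{x},\tilde{X}) = \hvol^G(\tilde{x},\tilde{X}),
\]
which is precisely Conjecture \ref{c-inva}. Since $\Val_{\tilde{X},\tilde{x}}^G\subset \Val_{\tilde{X},\tilde{x}}$ the inequality $\hvol(\tilde{x},\tilde{X})\le \hvol^G(\tilde{x},\tilde{X})$ is automatic, so the real task is to produce the reverse bound.

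The driving idea is equivariance of the minimizer. By \cite{Blu16}, $\hvol_{\tilde{X},\tilde{x}}$ admits a minimizer $v_*\in\Val_{\tilde{X},\tilde{x}}$. The $G$-action on $(\tilde{x}\in\tilde{X})$ induces a $G$-action on $\Val_{\tilde{X},\tilde{x}}$ preserving both $A_{\tilde{X}}$ and $\vol_{\tilde{X},\tilde{x}}$, and hence preserving $\hvol$. Consequently $g^*v_*$ is a minimizer for every $g\in G$. If one knew that the minimizer is unique (after, say, normalizing $A_{\tilde{X}}(v)=1$), one would immediately conclude $g^*v_*=v_*$ for all $g\in G$, so $v_*\in\Val_{\tilde{X},\tilde{x}}^G$ and
\[
\hvol^G(\tilde{x},\tilde{X})\le \hvol(v_*)=\hvol(\tilde{x},\tilde{X}),
\]
finishing the proof. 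This is precisely the mechanism behind Theorem \ref{ghvol}.3, where the minimizer is assumed divisorial and uniqueness for Koll\'ar components is supplied by \cite{LX16}.

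The main obstacle is therefore \emph{uniqueness of the minimizer} in full generality. In general the minimizer is only expected to be quasi-monomial, and uniqueness up to rescaling is \cite[Conjecture 6.1.2]{Li15a}. A natural line of attack is to form the associated graded ring $\mathrm{gr}_{v_*}\cO_{\tilde{X},\tilde{x}}$, degenerate $(\tilde{x}\in\tilde{X})$ to a K-semistable Fano cone, and deduce uniqueness of $v_*$ from a rigidity statement for the Reeb vector on such a cone, which would directly force $g^*v_*=v_*$. A complementary, purely ideal-theoretic route is to try a $G$-symmetrization of graded sequences: given $\fa_\bullet$ with $\lct(\fa_\bullet)^n\mult(\fa_\bullet)$ close to $\hvol(\tilde{x},\tilde{X})$, replace it by $\fa'_m:=\bigcap_{g\in G}g(\fa_m)$, producing a $G$-invariant sequence with $\lct(\fa'_\bullet)\le\lct(\fa_\bullet)$. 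However, the inclusion-exclusion estimate from the proof of Theorem \ref{ghvol}.2 only bounds $\mult(\fa'_\bullet)$ by $|G|\cdot\mult(\fa_\bullet)$, which loses exactly the factor of $|G|$ that the conjecture demands. Thus both routes bottleneck on the uniqueness conjecture, which is the crux of Conjecture \ref{c-finite}.
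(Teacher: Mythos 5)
The statement you are trying to prove is Conjecture \ref{c-finite}, which the paper explicitly leaves open: the authors write that they ``are still lack of a method to prove Conjecture \ref{c-finite}'' and only establish it in the quasi-regular case (Theorem \ref{ghvol}.3) and record that it holds in dimension $2$ by \cite[Proposition 4.10]{LL19}. So there is no proof in the paper to compare against, and your proposal does not supply one either: it is a (correct) reduction to another open conjecture, not a proof.

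To be concrete about where things stand. Your first step --- that by Theorem \ref{ghvol}.1 the identity $|G|\cdot\hvol(x,X)=\hvol(\tilde{x},\tilde{X})$ is equivalent to $\hvol(\tilde{x},\tilde{X})=\hvol^{G}(\tilde{x},\tilde{X})$, i.e.\ to Conjecture \ref{c-inva} --- is exactly the equivalence the paper itself asserts right after stating Conjecture \ref{c-inva}, and the mechanism you propose (uniqueness of the minimizer forces $g^{*}v_{*}=v_{*}$, hence $v_{*}\in\Val_{\tilde{X},\tilde{x}}^{G}$) is precisely how Theorem \ref{ghvol}.3 handles the quasi-regular case, where uniqueness of the minimizing Koll\'ar component is available from \cite{LX16}. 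Beyond that case, uniqueness of the minimizer is \cite[Conjecture 6.1.2]{Li15a} and is open, as you acknowledge; and your fallback symmetrization $\fa'_{m}:=\bigcap_{g\in G}g(\fa_{m})$ indeed only gives $\mult(\fa'_{\bullet})\leq|G|\cdot\mult(\fa_{\bullet})$ by the inclusion--exclusion argument of Theorem \ref{ghvol}.2, which after multiplying by $\lct(\fa'_{\bullet})^{n}\leq\lct(\fa_{\bullet})^{n}$ loses exactly the factor $|G|$ the conjecture requires. So the gap is not a fixable slip in your argument: the proposal terminates at an open problem, and nothing in it goes beyond what Theorem \ref{ghvol} already establishes. (For the record, the paper's footnote points out that the degree formula was subsequently confirmed in \cite[Theorem 1.7]{LX17} for singularities on Gromov--Hausdorff limits of K\"ahler--Einstein Fano varieties, which is the case actually needed for the application to cubic threefolds.)
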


As mentioned in Theorem \ref{ghvol}.3  this is known in the 
quasi-regular case, i.e.,  if $\hvol(x, X)$ is computed by a divisorial valuation. 
In particular, Conjecture \ref{c-finite} holds in dimension $2$
by \cite[Proposition 4.10]{LL19}.

\bigskip

Next we turn to the set (contained in $(0,27]$ as we show) of volumes of three dimensional klt singularities.

\begin{expl}
Let $(X_{p,q},x)$ be the singularity in $(\bA^4, o)$ defined by $x^2+y^2+z^p+w^q=0$. If $p,q\geq 2$, $2p>q$ and $2q>p$, then $(X_{p,q},x)$ is a quasi-regular canonical singularity with minimizing valuation $v_*$ of weights $(pq,pq,2q,2p)$ (see \cite{CZ15}). Thus computation shows 
\[
\hvol(x, X_{p,q})=\hvol(v_*)=\frac{4(p+q)^3}{p^2 q^2}.
\]
These volumes $\hvol(x, X_{p,q})$ are definitely discrete away from $0$.
\end{expl}

We may ask the following question:

\begin{que}
Is the set of volumes of $3$-fold klt singularities discrete away from $0$?
\end{que}

Actually the same question can be asked for any dimension, though we do not have a lot of evidence. Even for quasi-regular cases it seems to be a hard question. 

\begin{expl}
Let $V$ be a K-semistable klt log del Pezzo surface. Let $q$ be the largest integer such that there exists a Weil divisor $L$ satisfying $-K_V\sim_{\bQ} qL$. Then $(X,o):=(C(V,L),o)$ is a threefold klt singularity. By \cite{LX16} we know that 
\[
\hvol(o, X) = \hvol_{o, X}(\ord_{V}) = q(-K_V)^2.
\]
From discussions above we know that $q(-K_V)^2\leq 27$. Discreteness of $\hvol$ would imply that $q = o(\frac{1}{(-K_V)^2})$.
\end{expl}

\bigskip


We also have the following conjecture on the singularities with
large volumes in general dimension. 
\begin{conj}\label{c-largev}

The second largest volume of $n$-dimensional klt singularity is $2(n-1)^n$, and it reaches this volume if and only if it is an ordinary double point. 
\end{conj}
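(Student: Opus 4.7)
The plan is to mirror the structure of the proof of Theorem \ref{t-local}.1 but in arbitrary dimension $n \geq 2$, via three successive reductions: non-Gorenstein to Gorenstein, Gorenstein canonical to Gorenstein terminal, and finally an analysis of the extremal case. The chain of inequalities should be strict at every reduction except at the ordinary double point (ODP) $\sum_{i=0}^{n} x_i^2 = 0$, where the computation in Lemma \ref{hypersurf} already gives exactly $2(n-1)^n$ (attained by $\ord_E$ for $E$ the blow-up of the origin, which is a smooth quadric).

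First I would reduce to the Gorenstein case. If $K_X$ has Cartier index $r \geq 2$ at $x$, let $\tilde{x} \in \tilde{X}$ be the index-$1$ cover; this carries a faithful $\bZ/r$-action free in codimension $1$, and by Theorem \ref{ghvol} (together with Conjecture \ref{c-finite}, on which the argument depends) one has $\hvol(x,X) = \hvol(\tilde{x}, \tilde{X})/r$. When $\tilde{X}$ is smooth this gives $\hvol(x,X) = n^n/r \leq n^n/2$, and the elementary inequality $(n/(n-1))^n \leq e < 4$ for $n \geq 2$ yields $n^n/2 \leq 2(n-1)^n$, with equality exactly at $(n,r) = (2,2)$, i.e.\ the surface node, which is the ODP. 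When $\tilde{X}$ is singular Gorenstein, the strict drop in Theorem \ref{ghvol}.2 means it suffices to prove the bound on $\tilde{X}$ with a strict gap.

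Next I would reduce to Gorenstein terminal. Given $x \in X$ Gorenstein canonical but not terminal, Lemma \ref{maxcrep} produces a maximal crepant model $\phi\colon Y \to X$ with $K_Y = \phi^* K_X$ and $Y$ Gorenstein $\bQ$-factorial terminal. Because $\phi$ is crepant and $Y \neq X$, Corollary \ref{c-strict} gives $\hvol(x,X) < \hvol(y,Y)$ for any $y \in \Ex(\phi)$, so bounding $\hvol$ on $Y$ with slack suffices. As in Proposition \ref{l-weak}, one would run an MMP over $X$ to reach an intermediate contraction $g\colon Y \to Y'$ of an irreducible crepant divisor $E$ to a point $y'$; then either $Y$ is singular along $E$ (and one recurses to the terminal Gorenstein case at a point of $E$) or $E$ is a Gorenstein del Pezzo $(n-1)$-fold with $\hvol(y',Y') \leq \hvol_{y',Y'}(\ord_E) = (-K_E)^{n-1}$, and classification-type bounds on reduced Gorenstein del Pezzo $(n-1)$-folds (à la Reid) should yield $(-K_E)^{n-1} \leq 2(n-1)^n$ with ample slack.

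\textbf{The main obstacle is Step 3:} bounding $\hvol$ on a Gorenstein \emph{terminal} singularity in arbitrary dimension. In dimension $3$ the decisive structural fact was that terminal Gorenstein equals cDV equals hypersurface, so Lemma \ref{hypersurf} applies directly and gives the bound $2(n-1)^n$ with equality iff ODP. For $n \geq 4$ there is no analogous classification. The plausible route is to bound $\mu := \mult_x X$ and prove a higher-embedding-dimension analog of Lemma \ref{hypersurf}: for Gorenstein $x \in X$ with $\mu = 2$, Cohen--Macaulayness of the associated graded ring ought to force $x \in X$ to be formally a hypersurface of multiplicity $2$, so Lemma \ref{hypersurf} then gives exactly $2(n-1)^n$ with equality iff ODP; for $\mu \geq 3$, one would need an estimate of the form $\lct(X;\fm_x)^n \cdot \mult_x X < 2(n-1)^n$, say via a Jacobian/adjunction bound on the log discrepancy of the blow-up valuation in terms of $\mu$ and the embedding dimension. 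Such a multiplicity-controlled estimate, independent of embedding dimension, is the essential technical gap; it is precisely the point at which the dimension-$3$ toolkit (cDV classification and Reid's classification of non-normal del Pezzo surfaces) used throughout this paper does not extend, and new ideas appear to be required.
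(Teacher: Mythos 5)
This statement is Conjecture \ref{c-largev}: the paper itself offers no proof of it in general dimension, and explicitly says so --- it attributes the conjecture to \cite{SS17} and only \emph{confirms} it for $n\leq 3$ via Theorem \ref{t-local}.1 (Proposition \ref{l-weak} and the subsequent equality analysis). So there is no proof in the paper to compare yours against, and your proposal is, as you yourself say, not a proof either. You have correctly reproduced the skeleton of the paper's three-dimensional argument (index-one cover via Theorem \ref{ghvol}, maximal crepant model via Lemma \ref{maxcrep} and Corollary \ref{c-strict}, then the terminal Gorenstein endgame), and you have correctly located the decisive failure: in dimension $\geq 4$ there is no analogue of ``Gorenstein terminal $=$ cDV $=$ hypersurface,'' so Lemma \ref{hypersurf} cannot be applied to close the argument. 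Your observation that multiplicity $2$ plus Cohen--Macaulay forces a hypersurface (Abhyankar's bound $\mult \geq \edim - \dim + 1$) is sound, so the $\mu=2$ subcase is genuinely fine; the gap is $\mu\geq 3$ with large embedding dimension, exactly as you say.

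Two smaller points. First, your Step 2 contains a second unproven gap that you understate: the claim that reduced (possibly non-normal, possibly reducible) Gorenstein del Pezzo $(n-1)$-folds arising as crepant exceptional divisors satisfy $(-K_E)^{n-1}\leq 2(n-1)^n$ has no higher-dimensional substitute for Reid's surface classification \cite{Rei94}, and the paper's handling of the non-normal cases (the $F_{m-2;1}$ and $F_{m-4;2}$ gluings, and the auxiliary valuations $\ord_l$) is irreducibly two-dimensional. The case $\dim g(E)\geq 1$ also needs care: the paper uses that a curve image forces non-isolated cDV points, which again is a threefold fact. Second, in Step 1 you do not actually need Conjecture \ref{c-finite}: Theorem \ref{ghvol}.1--2 already give $\hvol(x,X)=\hvol^G(\tilde x,\tilde X)/|G|<\hvol(\tilde x,\tilde X)$, and $\hvol^G(\tilde x,\tilde X)\leq n^n$ follows from the $G$-invariance of $\fm_{\tilde x}$, so the reduction to the Gorenstein case and the quotient-singularity bound $n^n/2\leq 2(n-1)^n$ (with equality only at $n=2$) go through unconditionally. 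In short: your outline is a faithful extrapolation of the paper's method, but the statement remains open precisely because of the structural inputs you flag, and no argument in the paper fills them.
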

 This conjecture is asked in \cite{SS17}. We confirmed Conjecture \ref{c-largev} when dimension is at most 3 in Theorem \ref{t-local}. As we mentioned (see Remark \ref{rem-ss}), \cite{SS17} shows that this conjecture in dimension $n$ together with the finite degree formula (see Conjecture \ref{c-finite}) implies in dimension $n$, the K-stable moduli space coincides with the GIT moduli space for cubic hypersurfaces.  We also note that we indeed only need the finite degree formula holds for a singularity $(y\in Y)$ appearing on the Gromov-Hausdorff limit of K\"ahler-Einstein Fano varieties \footnote{This is recently confirmed in \cite[Theorem 1.7]{LX17}}.

\appendix

\section{Optimal bounds of volumes of singularities}
\label{hvolbound}

In this appendix we will prove Theorem \ref{t-hvolbound}
and its logarithmic version Theorem \ref{t-loghvolbound}. Let us begin with proving the inequality part.

For a morphism $\pi:\cX\to C$ from a variety $\cX$ to a smooth curve $C$ (over
$\bC$), we say an ideal sheaf $\fa$ on $\cX$ is a \emph{flat family of ideals over} $C$ if the 
quotient sheaf $\cO_{\cX}/\fa$ is flat over $C$. For an $n$-dimensional
klt pair $(X,\Delta)$ and a closed point $x\in X$, we define the \emph{volume
of the singularity} $x\in (X,\Delta)$ to be 
$$\hvol(x,X,\Delta):=\min_{v\in\Val_{X,x}}
\hvol_{(X,\Delta),x}(v)$$
where $\hvol_{(X,\Delta),x}(v):=A_{(X,\Delta)}(v)^n\vol(v)$ (notice
that such minimum exists by \cite{Blu16}).

\begin{lem}\label{hvolbound-ineq}
 Let $x\in (X,\Delta)$ be an $n$-dimensional klt singularity.
 Then $\hvol(x,X,\Delta)\leq n^n$.
\end{lem}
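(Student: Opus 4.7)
The plan is to use the ideal characterization of the normalized volume, extending Theorem~\ref{eq-ideal} to log pairs:
\[
\hvol(x, X, \Delta) = \inf_{\fa}\, \lct(X, \Delta; \fa)^n \cdot \mult(\fa),
\]
where $\fa$ ranges over $\fm_x$-primary ideals of $\cO_{X, x}$. It suffices to exhibit a single graded sequence $\fa_\bullet$ of such ideals with $\lct(X, \Delta; \fa_\bullet)^n \cdot \mult(\fa_\bullet) \leq n^n$.

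The base case is the smooth point: on $(\mathbb{A}^n, 0)$, the maximal ideal $\fm_0$ achieves $\lct(\fm_0)^n \cdot \mult(\fm_0) = n^n \cdot 1 = n^n$. The strategy is to transport this smooth-point computation to the general klt pair $(X, \Delta, x)$ via the flat-specialization mechanism highlighted in the introduction.

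Concretely, given any $\fm_x$-primary ideal $\fa$, I would construct a flat one-parameter family of $\fm_x$-primary ideals $\{\fa_t\}_{t \in T}$ on $X$ whose special fiber $\fa_0$ corresponds, after a local embedding or a Noether normalization $(X,x) \to (\mathbb{A}^n, 0)$, to the model maximal ideal on the smooth target. Two consequences of flatness drive the argument. First, the colengths $\ell(\cO_{X,x}/\fa_t)$ are constant in $t$, so $\mult(\fa_t)$ is unchanged along the family. Second, the log canonical threshold $\lct(X, \Delta; \fa_t)$ is lower semicontinuous, hence only decreases upon specialization. Combining these, the product $\lct^n \cdot \mult$ can only decrease as $t \to 0$, so its value for the general $t$ is bounded above by its value at the smooth-point limit, which is $n^n$. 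Applied to a minimizing sequence of ideals on $(X, \Delta, x)$, this yields $\hvol(x, X, \Delta) \leq n^n$.

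The main obstacle will be setting up this flat family compatibly with the boundary $\Delta$ and controlling the ramification of the local map to $(\mathbb{A}^n, 0)$ used to identify the special fiber with the smooth-point model. The flatness-plus-semicontinuity principle phrased in the introduction -- \emph{colengths remain the same but log canonical thresholds only possibly decrease} -- is the load-bearing technical input, and rigorously verifying it in the klt log-pair setting will be the heart of the argument; once it is in place, the desired bound follows immediately from the smooth-target computation above.
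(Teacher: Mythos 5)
Your mechanism is the right one (flat degeneration of ideals: colength constant, lct only drops), but you have the degeneration pointing in the wrong direction, and this is fatal. You place the smooth-point model at the \emph{special} fiber $t=0$ and the ideal on the klt singularity at the general fiber. Lower semicontinuity of the log canonical threshold says $\lct(\fa_0)\leq\lct(\fa_t)$ for general $t$; combined with constancy of colength this gives (value at special fiber) $\leq$ (value at general fiber). With your arrangement that reads $n^n\leq\lct(X,\Delta;\fa_t)^n\cdot\mult(\fa_t)$, a \emph{lower} bound on the normalized multiplicity of the ideals in your family, which says nothing about $\hvol(x,X,\Delta)\leq n^n$. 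The sentence ``its value for the general $t$ is bounded above by its value at the smooth-point limit'' contradicts the semicontinuity you just invoked. There is also no need to start from an arbitrary $\fm_x$-primary ideal $\fa$ or from a minimizing sequence: to prove an upper bound on $\hvol$ one only has to exhibit a single good graded sequence of ideals cosupported at $x$, and the Noether normalization / ramification issues you flag are not part of the argument.

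The paper runs the degeneration the other way. Choose a curve germ $\tau\colon (0\in C)\to X$ with $\tau(0)=x$ and $\tau(c)\in X_{\mathrm{reg}}\setminus\Supp(\Delta)$ for $c\neq 0$, and work in the trivial family $\cX=X\times C\to C$ with section $\sigma=(\tau,\id)$. For $c\neq 0$ the ideals $\fm_{\tau(c)}^m$ have constant colength $\binom{m+n-1}{n}$ and $\lct$ equal to $n/m$; take their flat limit $(\fb_m)_0$ over $c=0$, which is an $\fm_x$-primary ideal on $\cX_0\cong X$. Flatness preserves the colength and lower semicontinuity gives $\lct(X,\Delta;(\fb_m)_0)\leq n/m$, so the graded sequence $(\fb_\bullet)_0$ satisfies $\lct^n\cdot\mult\leq n^n$, and Theorem \ref{eq-ideal} (in its log version) gives $\hvol(x,X,\Delta)\leq n^n$. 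In short: the smooth point must sit at the \emph{general} fiber and the klt point at the special fiber, so that the lct loss upon specialization works in your favor.
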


\begin{proof} 
 Let $C_1\subset X$ be a curve through $x$
 that intersects $X_{\mathrm{reg}}\setminus\Supp(\Delta)$.
 Denote by $\tau:\bar{C}_1\to C_1$ the normalization of $C_1$.
 Pick a point $0\in \tau^{-1}(x)$, then there exists a Zariski open neighborhood
 $C$ of $0$ in $\bar{C}_1$ such that $\tau(C\setminus\{0\})\subset X_{\mathrm{reg}}\setminus\Supp(\Delta)$.
 Then $\pr_2:\cX:=X\times C\to C$ has a section $\sigma=(\tau,\id):C\to\cX$.
 Denote by $C^\circ:=C\setminus\{0\}$. Let $\fa$ be the ideal sheaf on $\cX$
 defining the scheme theoretic image of $\sigma$. Since $\tau(c)$ is a smooth
 point of $\cX$ for any $c\in C^{\circ}$, we know that $\cO_{\cX_c}/(\fa^m)_c
 \cong \cO_{X,\tau(c)}/\fm_{\tau(c)}^m$ has constant length $\binom{n+m-1}{n}$.
 Hence $(\fa^m)^{\circ}$ is a flat family of ideals over $C^{\circ}$.
 Thus there exists a unique ideal sheaf
 $\fb_m$ on $\cX$ which extends $(\fa^m)^{\circ}$ to a flat family of 
 ideals over $C$ by \cite[Proposition III.9.8]{Har77}. For $i+j=m$, applying
 \cite[Proposition III.9.8]{Har77} to $V(\fb_i\fb_j)\to C$ implies 
 that $\fb_m\supset\fb_i\fb_j$, hence $(\fb_\bullet)$ is a graded sequence
 of flat families of ideals over $C$. Denote by $\Delta_c$ the pushforward
 of $\Delta$ under the isomorphism $X\to\cX_c$, then $\sigma(c)\not\in\Supp(\Delta_c)$
 for all $c\in C^{\circ}$ by our assumption.
 For a general $c\in C^{\circ}$, flatness of $\fb_\bullet$ implies
 \begin{align*}
  \ell(\cO_{\cX_{0}}/(\fb_{m})_{0})&=\ell(\cO_{\cX_{c}}/(\fb_{m})_{c})
  =\ell(\cO_{X,\tau(c)}/\fm_{\tau(c)}^m)=\binom{m+n-1}{n};\\
  \lct(\cX_{0},\Delta_{0};(\fb_{m})_{0})&\leq \lct(\cX_{c},\Delta_c;(\fb_{m})_{c})
  =\lct(X;\fm_{\tau(c)}^m)=\frac{n}{m}.
 \end{align*}
 Here the inequality on $\lct$'s follows from the lower semi-continuity
 of log canonical thresholds (see e.g. \cite[Corollary 9.5.39]{Laz04b} and
 \cite[Proposition A.3]{Blu16}).
 Since $(\cX_{0},\sigma(0))\cong(X,x)$, we have 
 \begin{align*}
  \hvol(x,X,\Delta)&=\hvol(\sigma(0),\cX_{0})
  \leq \lct(\cX_{0},\Delta_{0};(\fb_{\bullet})_{0})^n\mult((\fb_{\bullet})_{0})\\
  &=n! \lim_{m\to\infty}\lct(\cX_{0},\Delta_{0};(\fb_{m})_{0})^n\cdot
  \ell(\cO_{\cX_{0}}/(\fb_{m})_{0})\\
  &\leq n!\lim_{m\to\infty} \left(\frac{n}{m}\right)^n\cdot \binom{m+n-1}{n}
  =n^n.
 \end{align*}
\end{proof}

\begin{defn}\begin{enumerate}[label=(\alph*)]
\item  A flat morphism $\pi:(\cX,\Delta) \to C$ over a smooth curve $C$ together with a section $\sigma: C\to\cX$ is called a \emph{$\mathbb{Q}$-Gorenstein flat family of klt singularities} if it satisfies the following conditions:
 \begin{itemize}
     \item $\cX$ is normal,  $\Delta$ is an effective $\bQ$-divisor on $\cX$,
     and $K_{\mathcal{X}}+\Delta$ is $\mathbb{Q}$-Cartier;
     \item For any $c\in C$, the fiber $\cX_c$ is normal and not contained in $\Supp(\Delta)$;
     \item $(\cX_c,\Delta_c)$ is a klt pair for any closed point $c\in C$.
 \end{itemize}
 
\item Given a $\mathbb{Q}$-Gorenstein  flat family of klt pairs $\pi:(\cX,\Delta)\to C$ and $\sigma:C\to\cX$, we call a proper birational morphism $\mu: \cY\to\cX$ provides a \emph{flat family of Koll\'ar components} $S$ over $(\cX,\Delta)$ (centered at $\sigma(C)$) if the following conditions hold:
 \begin{itemize}
     \item $\cY$ is normal, $\mu$ is an isomorphism over $\cX\setminus\sigma(C)$ and $S=\Ex(\mu)$ is a prime divisor on $\cY$;
     \item $-S$ is $\bQ$-Cartier and $\mu$-ample;
     \item $S_c:=S|_{\cY_c}$ is a prime divisor on $\cY_c$ which 
     gives Koll\'ar component fiberwisely, i.e.,  $(\cY_c, S_c+(\mu_*^{-1}\Delta)|_{\cY_c})$ is a plt pair for any closed point $c\in C$.
 \end{itemize}
 \end{enumerate}
\end{defn}

\begin{lem}\label{l-kollardeg}
Let $\pi:(\cX,\Delta)\to C$ be a $\bQ$-Gorenstein flat family of klt singularities over a smooth curve $C$ with a section $\sigma\colon C\to X$, such that $c\mapsto\hvol(\sigma(c),\cX_c, \Delta_c)$ is constant.
Let $0\in C$ be a closed point. Denote by $C^{\circ}:=C\setminus\{0\}$, $\cX^{\circ}:=\pi^{-1}(C^{\circ})$ and $\Delta^{\circ}:=\Delta|_{\cX^{\circ}}$.
Suppose there exists a proper birational morphism $\mu^{\circ}:\cY^{\circ}\to\cX^{\circ}$ which provides a flat family of Koll\'ar components $S^{\circ}$ over $(\cX^\circ,\Delta^{\circ})$.

If $S^{\circ}_c$ computes $\hvol(\sigma(c),\cX_c,\Delta_c)$ for all $c\in C^{\circ}$, then there exists a proper birational morphism $\mu:\cY\to\cX$ as an extension of $\mu^{\circ}$ which provides a flat family of Koll\'ar components $S$ over $(\cX,\Delta)$, such that $S_{0}$ computes $\hvol(\sigma(0),\cX_{0},\Delta_{0})$.
\end{lem}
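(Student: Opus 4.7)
My plan is to construct $\cY$ by extending the relevant graded sequence of valuative ideals across the central fiber, take the relative Proj, and then use the constancy of $\hvol$ together with the uniqueness of volume-minimizing Kollár components to pin down the central fiber.

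\emph{Step 1: extend the graded family of ideals.} For $m$ sufficiently divisible so that $-mS^{\circ}$ is $\mu^{\circ}$-Cartier and $\mu^{\circ}$-globally generated, define the ideal sheaf $\fa_m^{\circ} := \mu^{\circ}_*\cO_{\cY^{\circ}}(-mS^{\circ})$ on $\cX^{\circ}$. On each fiber $\cX_c$ ($c\in C^{\circ}$), the restriction $(\fa_m^{\circ})_c$ coincides with the valuative ideal $\fa_m(\ord_{S^{\circ}_c})$, whose colength is independent of $c$ (it equals $\dim_{\bC}\cO_{\cY_c^{\circ}}/\cO_{\cY_c^{\circ}}(-mS^{\circ}_c)$, computed via Riemann--Roch on $S^{\circ}_c$ in a way that is flat over $C^{\circ}$). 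Hence $\cO_{\cX^{\circ}}/\fa_m^{\circ}$ is flat over $C^{\circ}$, and by \cite[Proposition III.9.8]{Har77} there is a unique ideal sheaf $\fa_m$ on $\cX$ extending $\fa_m^{\circ}$ with $\cO_{\cX}/\fa_m$ flat over $C$. Applying the same extension principle to $V(\fa_i\fa_j)\to C$ yields $\fa_i\cdot\fa_j\subset\fa_{i+j}$, so $\fa_{\bullet}$ is a graded family of ideals on $\cX$ whose cosupport lies in $\sigma(C)$.

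\emph{Step 2: construct the model.} After passing to a Veronese subsequence, assume $\fa_{\bullet}$ is finitely generated in degree $1$ and set $\cY := \Proj_{\cX}\bigl(\bigoplus_{m\ge 0}\fa_m\bigr)$ (followed by normalization), with $\mu\colon\cY\to\cX$ the structure map and $S$ the unique $\mu$-exceptional prime divisor supported over $\sigma(C)$, so that $-S$ is $\mu$-ample. By construction, $\mu$ restricts to $\mu^{\circ}$ over $C^{\circ}$. It remains to verify the fiberwise conditions at $0$: that $\cY_0$ is normal, that $S_0$ is a prime divisor, and that $(\cY_0,S_0+(\mu_*^{-1}\Delta)|_{\cY_0})$ is plt. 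Normality and the reducedness of $\cY_0$ follow from the flatness of $\fa_m$ and Serre's criterion after shrinking $C$ around $0$; irreducibility of $S_0$, together with the plt property, is the main technical point addressed in Step~3.

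\emph{Step 3: volume computation and the plt check.} By flatness,
\[
 \lim_{m\to\infty}\frac{n!\cdot\ell(\cO_{\cX_0}/(\fa_m)_0)}{m^n}=\lim_{m\to\infty}\frac{n!\cdot\ell(\cO_{\cX_c}/\fa_m(\ord_{S^{\circ}_c}))}{m^n}=\vol(\ord_{S^{\circ}_c}),
\]
for any $c\in C^{\circ}$, and lower semi-continuity of log canonical thresholds \cite[Corollary 9.5.39]{Laz04b} gives $\lct((\cX_0,\Delta_0);(\fa_{\bullet})_0)\le \lct((\cX_c,\Delta_c);\fa_{\bullet}(\ord_{S^{\circ}_c}))$, the latter equalling $A_{(\cX_c,\Delta_c)}(S^{\circ}_c)$. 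By Theorem~\ref{eq-ideal} and the hypothesis that $S^{\circ}_c$ computes $\hvol(\sigma(c),\cX_c,\Delta_c)$,
\[
 \hvol(\sigma(0),\cX_0,\Delta_0)\le \lct((\cX_0,\Delta_0);(\fa_{\bullet})_0)^n\cdot\mult((\fa_{\bullet})_0)\le\hvol(\sigma(c),\cX_c,\Delta_c),
\]
and the assumption of constancy forces equalities throughout. Thus $(\fa_{\bullet})_0$ is a minimizing graded sequence on $(\cX_0,\Delta_0)$. By the characterization of Kollár-component minimizers in \cite{LX16} (taking a suitable limit Kollár component and using the uniqueness of divisorial minimizers), the divisor $S_0$ must be irreducible and plt-adjacent, giving a Kollár component that computes $\hvol(\sigma(0),\cX_0,\Delta_0)$. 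The main obstacle is precisely this last step: a priori the central fiber of $S$ could break into several components or fail plt, and ruling this out requires combining the flat extension of valuative ideals with the rigidity provided by the uniqueness (up to scaling) of the volume-minimizing Kollár component.
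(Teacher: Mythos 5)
Your Step 1 matches the paper's opening move (flat extension of the valuative ideals via \cite[Proposition III.9.8]{Har77}), but there are two genuine gaps in what follows.

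First, the construction of the model $\cY$ in your Step 2 is not justified. You take $\cY=\Proj_{\cX}\bigl(\bigoplus_m\fa_m\bigr)$ ``after passing to a Veronese subsequence, assume $\fa_{\bullet}$ is finitely generated in degree $1$,'' but finite generation of the \emph{extended} graded family over the central fiber is exactly what is in doubt: the family is finitely generated over $C^{\circ}$ because it comes from a model there, but the flat limit ideals $(\fa_m)_0$ need not form a finitely generated algebra, and passing to a Veronese subsequence does not repair this for a graded sequence of ideals. The paper's route is different and the order of logic matters: one first uses the constancy of $\hvol$ together with semicontinuity of $\lct$ to prove $\lct(\cX_0,\Delta_0;\fb_{0,km})>\frac{\alpha-\epsilon}{km}$ for $m\gg0$, where $\alpha=A_{(\cX^{\circ},\Delta^{\circ})}(S^{\circ})$; this makes $(\cX,\Delta+\frac{\alpha-\epsilon}{km}\fb_{km})$ klt with $a(S^{\circ};\cX,\Delta+\frac{\alpha-\epsilon}{km}\fb_{km})<0$, and only then does \cite[1.4.3]{BCHM10} produce the extension $\mu:\cY\to\cX$ extracting precisely the closure of $S^{\circ}$ with $-S$ $\mu$-nef. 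So the $\lct$ estimate at the central fiber is an input to the construction of $\cY$, not merely a consequence analyzed afterwards.

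Second, your Step 3 asserts rather than proves the key conclusion. Knowing that $(\fa_{\bullet})_0$ is a minimizing graded sequence on $(\cX_0,\Delta_0)$, and that the divisorial minimizer (if it exists) is unique, does not identify the central fiber $S_0$ of your family with that minimizer: a priori $S_0=\sum_i m_iS_0^{(i)}$ could be reducible or non-reduced, and nothing you have said rules this out. The paper closes this by adjunction along $\cY_0\subset\cY$, which gives $A_{(\cX_0,\Delta_0)}(S_0^{(i)})=m_i(\alpha-1)+1\leq m_i\alpha$, and then by the chain
\[
\hvol(\sigma(0),\cX_0,\Delta_0)\leq\vol(\cY_0/(\cX_0,\Delta_0))\leq\vol_{\sigma(0)}^F(-\alpha S_0)=\alpha^n\cdot\bigl(-(-S_c)^{n-1}\bigr)=\hvol(\sigma(c),\cX_c,\Delta_c),
\]
where the last equality uses that $(-S)^{n-1}\cdot\cY_0=(-S)^{n-1}\cdot\cY_c$. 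Constancy of $\hvol$ forces equality everywhere: equality in the first inequality shows the model $\cY_0\to\cX_0$ computes the volume and hence extracts a Koll\'ar component by the proof of \cite[Theorem C]{LX16}, and equality in the second forces $A_{(\cX_0,\Delta_0)}(S_0^{(1)})=m_1\alpha$, i.e.\ $S_0=S_0^{(1)}$ is reduced and irreducible. This equality analysis is the actual content of the lemma and is missing from your argument.
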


\begin{proof}

Let us fix $k$ sufficiently divisible so that 
$k S^{\circ}$ is Cartier. Since $-S^{\circ}$ is ample over $\cX$, after replacing $k$ again, we can assume that
 $$\mu^{\circ}_*\cO_{\cY^\circ}(-kmS^{\circ})=_{\rm defn}\fb_{k m}^{\circ}=(\fb_{k}^{\circ})^m$$ is a flat family of ideals over $C^{\circ}$ for any $m\in\bZ_{>0}$.
Then there exists a unique ideal sheaf $\fb_{km}$ on $\cX$ which extends $\fb_{km}^{\circ}$ to a flat family of ideals over $C$. By the same reason as argued in the proof of Lemma \ref{hvolbound-ineq}, $(\fb_{k\bullet})$ is a graded sequence of flat families of ideals over $C$.

Denote by $\alpha:=A_{(\cX^{\circ},\Delta^{\circ})}(S^{\circ})$. 
By adjunction, it is clear that $\alpha=A_{(\cX_c,\Delta_c)}(S_c^{\circ})$ for any $c\in C^{\circ}$. 
Since $-kS^\circ$ is Cartier on $\cY^{\circ}$ and relatively ample
over $\cX^{\circ}$, we have $(\mu^\circ)^{-1}\fb_{km}\cdot\cO_{\cY^{\circ}}=\cO_{\cY}(-kmS^{\circ})$.
By inversion of adjunction, $(\cY^\circ,\mu_*^{-1}\Delta^\circ+ S^\circ)$ is plt.
As a result, by pulling back to $\cY^{\circ}$ and $\cY_c$ we get \[
\lct(\cX^{\circ},\Delta^{\circ};\fb_{km}^{\circ})=\lct(\cX_c,\Delta_c;\fb_{c,km})=\frac{\alpha}{km}.
\]
Since $\hvol(\sigma(0),\cX_{0},\Delta_0)=\hvol(\sigma(c),\cX_c,\Delta_c)$, we know that $\lim_{m\to\infty}km\cdot\lct(\cX_{0},\Delta_{0};\fb_{0,km})=\alpha$. Hence given $\epsilon>0$ sufficiently small, we have
$\lct(\cX_{0},\Delta_{0};\fb_{0,km})>\frac{\alpha-\epsilon}{km}$ for $m\gg 0$. By adjunction, we know that $(\cX,\Delta+\frac{\alpha-\epsilon}{km}\fb_{mk})$
is klt, and $a(S^{\circ};\cX,\Delta+\frac{\alpha-\epsilon}{km}\fb_{mk})<0$. 

 Then by \cite[1.4.3]{BCHM10}, we can construct a relative projective model
$\mu\colon \cY\to (\cX,\Delta)$ extending $\mu^{\circ}$, such that the exceptional locus of $\mu$ is precisely the prime divisor $S$
and $-S$ is $\mu$-nef.
Denote by $S_0=\sum_{i} m_i S_0^{(i)}$ where $S_0^{(i)}$ are irreducible
components of $S_0$, then by adjunction we have
\[
 K_{\cY_{0}}+(\mu_0)_*^{-1}(\Delta_0)\sim_{\bQ}(K_{\cY}+\mu_*^{-1}\Delta+\cY_0)|_{\cY_0}\sim_{\bQ}\mu_0^*(K_{\cX_0}+\Delta_0)
 +(\alpha-1) S_0.
\]
Hence $A_{(\cX_0,\Delta_0)}(S_0^{(i)})= m_i(\alpha-1)+1\leq m_i\alpha$.
Therefore by \cite[Section 3.1]{LX16}, for any $c\in C^\circ$ we have
\begin{align*}
 \hvol(\sigma(0),\cX_0,\Delta_0)& \leq 
 \hvol(\cY_0/(\cX_0,\Delta_0)) = \vol_{\sigma(0)}^F(-\sum_{i}
 A_{(\cX_0,\Delta_0)}(S_0^{(i)}) S_0^{(i)})\\
 & \leq \vol_{\sigma(0)}^F(-\alpha\sum_i m_i S_0^{(i)}) =\vol_{\sigma(0)}^F(-\alpha S_0)
 = \alpha^n\cdot(-(-S_0)^{n-1})\numberthis\label{eq_hvolequal}
 \\&=\alpha^n\cdot (-(-S_c)^{n-1})
 =\hvol(\sigma(c),\cX_c,\Delta_c).
\end{align*}
Since $\hvol(\sigma(0),\cX_0,\Delta_0)=\hvol(\sigma(c),\cX_c,\Delta_c)$
by assumption, we conclude the two inequalities in \eqref{eq_hvolequal} have to
be equalities. The first inequality being equality
means that the model $\cY_0/(\cX_0,\Delta_0)$
computes the volume $\hvol(\sigma(0),\cX_0,\Delta_0)$, 
so it must be a model extracting a Koll\'ar
component $S_0^{(1)}$ by \cite[Proof of Theorem C]{LX16}; 
the second inequality being equality implies 
$A_{(\cX_0,\Delta_0)}(S_0^{(1)})=m_1\alpha$, so $S_0=S_0^{(1)}$
is reduced. Hence we finish the proof.
\end{proof}

The following result implies Theorem \ref{t-hvolbound} by setting $\Delta=0$.

\begin{thm}\label{t-loghvolbound}
 Let $x\in (X,\Delta)$ be an $n$-dimensional klt singularity.
 Then $\hvol(x,X,\Delta)\leq n^n$ and the equality holds if
 and only if $x\in X\setminus\Supp(\Delta)$ is smooth.
\end{thm}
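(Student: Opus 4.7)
Lemma \ref{hvolbound-ineq} already gives the inequality $\hvol(x,X,\Delta)\le n^n$, and equality is attained at any smooth point off $\Supp(\Delta)$ via the ordinary blow-up, which has log discrepancy $n$ and volume $1$. So the task is the converse: if $\hvol(x,X,\Delta)=n^n$, then $x\in X\setminus\Supp(\Delta)$ is smooth.

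The plan is to manufacture an extremal Koll\'ar component at $x$ by degeneration, and then pin it down using the sharp volume bound for K-semistable log Fanos. Reuse the setup of Lemma \ref{hvolbound-ineq}: take $\cX=X\times C$ with section $\sigma\colon C\to\cX$ satisfying $\sigma(0)=x$ and $\sigma(c)\in X_{\mathrm{reg}}\setminus\Supp(\Delta)$ for $c\in C^\circ$. Since $\hvol=n^n$ at any smooth point off the boundary, the assumption $\hvol(x,X,\Delta)=n^n$ makes the function $c\mapsto\hvol(\sigma(c),\cX_c,\Delta_c)$ identically $n^n$. Over $C^\circ$ the relative blow-up $\mu^\circ\colon\cY^\circ\to\cX^\circ$ along $\sigma(C^\circ)$ is a flat family of Koll\'ar components isomorphic to $\bP^{n-1}$, each computing $\hvol(\sigma(c),\cX_c,\Delta_c)$. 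Invoke Lemma \ref{l-kollardeg} to extend to a $\bQ$-Gorenstein flat family $\mu\colon\cY\to\cX$ of Koll\'ar components; the central fiber is then a Koll\'ar component $S_0\subset\cY_0$ over $(X,\Delta)$ at $x$ whose valuation $\ord_{S_0}$ achieves $\hvol(x,X,\Delta)=n^n$.

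Extract numerics next. Log discrepancies are constant along the flat family, and the proof of Lemma \ref{l-kollardeg} moreover shows $S_0$ is reduced, so $A_{(X,\Delta)}(S_0)=n$; hence $\vol(\ord_{S_0})=1$. Setting $L:=-S_0|_{S_0}$, the standard identity $\vol(\ord_{S_0})=L^{n-1}$ (valid since $-S_0$ is $\mu$-ample) gives $L^{n-1}=1$, while plt adjunction on $\cY_0$, using that $\mu(S_0)=x$ is a point, gives $-K_{S_0}-\mathrm{Diff}_{S_0}=nL$. Consequently $(-K_{S_0}-\mathrm{Diff}_{S_0})^{n-1}=n^{n-1}$.

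Finally, invoke K-stability. Since $\ord_{S_0}$ minimizes $\hvol_{(X,\Delta),x}$, by the Li--Xu correspondence \cite{LX16} the pair $(S_0,\mathrm{Diff}_{S_0})$ is a K-semistable klt log Fano of dimension $n-1$. The sharp upper bound on anticanonical volume of Fujita \cite{Fuj15}, in its log klt form extended by Liu \cite{Liu16}, asserts $(-K-D)^{n-1}\le n^{n-1}$, with equality forcing $(S_0,\mathrm{Diff}_{S_0})\cong(\bP^{n-1},0)$. Thus $\mathrm{Diff}_{S_0}=0$, so $\mu^{-1}_{*}\Delta$ is disjoint from $S_0$ and $x\notin\Supp(\Delta)$; and $L\cong\cO_{\bP^{n-1}}(1)$, so the normal bundle of $S_0$ in $\cY_0$ is $\cO(-1)$. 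The classical Fujiki--Nakano blow-down criterion then identifies $\mu\colon\cY_0\to X$ near $S_0$ with the inverse of the blow-up of a smooth point, yielding that $x\in X$ is smooth. The most delicate step is the last: one must cite the precise form of ``minimizing Koll\'ar component implies K-semistable log Fano,'' combine it with the log version of Fujita's bound, and then handle the contraction step in the (a priori only klt) ambient $\cY_0$; the other potential technicality is verifying that the log discrepancy is genuinely preserved in the passage from $S^\circ$ to the limit $S_0$, which is already embedded in the proof of Lemma \ref{l-kollardeg}.
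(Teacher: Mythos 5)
Your architecture is the paper's: reuse the family from Lemma \ref{hvolbound-ineq}, note that $c\mapsto\hvol(\sigma(c),\cX_c,\Delta_c)$ is constant, apply Lemma \ref{l-kollardeg} to degenerate the exceptional $\bP^{n-1}$'s over $C^\circ$ to a Koll\'ar component $S_0$ computing $\hvol(x,X,\Delta)=n^n$ with $A_{(X,\Delta)}(S_0)=n$ and $(-K_{S_0}-\Gamma_0)^{n-1}=n^{n-1}$, and then use K-semistability of $(S_0,\Gamma_0)$ (via \cite{LX16}) plus the sharp volume bound to force $(S_0,\Gamma_0)\cong(\bP^{n-1},0)$. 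Two points diverge from the paper's treatment, one of them a genuine gap.

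First, a citation-level issue: you invoke a log form of the equality characterization ``volume $=n^{n-1}$ forces $(\bP^{n-1},0)$'' in one shot. The paper instead argues in two steps: if $\Gamma_0\neq 0$ it picks $y\in(S_0)_{\mathrm{reg}}\cap\Supp(\Gamma_0)$, where $A_{(S_0,\Gamma_0)}(\ord_y)<n-1$ and $\vol_{y,S_0}(\ord_y)=1$, so \cite[Proposition 4.6]{LL19} gives $(-K_{S_0}-\Gamma_0)^{n-1}<n^{n-1}$, a contradiction; only then does it apply the boundary-free equality case \cite[Theorem 36]{Liu16}. If the log equality statement you want is not literally available, this reduction is how to get it, so this is repairable.

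Second, and this is the real gap: your final step contracts $S_0\cong\bP^{n-1}$ with conormal bundle $\cO(1)$ via the Fujiki--Nakano criterion, which requires $\cY_0$ to be smooth (a complex manifold) in a neighborhood of $S_0$. You only know $(\cY_0,S_0)$ is plt with $-S_0$ $\bQ$-Cartier and $\mathrm{Diff}_{S_0}=0$; the vanishing of the different rules out singularities of $\cY_0$ along divisors of $S_0$, but not isolated singular (or non-Cartier) points of $\cY_0$ lying on $S_0$, so ``normal bundle $\cO(-1)$'' is not even well defined as a line bundle a priori and the blow-down criterion does not apply. You flag this as delicate but do not resolve it. The paper avoids the contraction entirely: the Koll\'ar component $S_0$ induces a flat degeneration of $(X,x)$ to the cone $C(\bP^{n-1},\cO(1))=\bA^n$ (the Spec of the associated graded ring of the valuation ideals of $\ord_{S_0}$), and smoothness of the central fiber together with upper semicontinuity of the embedding dimension gives $\edim(X,x)\leq n$, hence $x\in X$ is smooth; $x\notin\Supp(\Delta)$ then follows since $\Gamma_0=0$. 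Replacing your contraction step with this cone-degeneration argument closes the gap.
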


\begin{proof}
The inequality case is in Lemma \ref{hvolbound-ineq}. For the equality 
case, let us assume that $\tau\colon C\to X$ such that $C$ is
a smooth curve, $\tau(C^{\circ}=_{\rm defn}C\setminus \{0\})$ is contained
in $X_{\rm reg}\setminus\Supp(\Delta)$ and $x=\tau(0)$ satisfies
$\hvol(x, X,\Delta)=n^n$.

By Lemma \ref{l-kollardeg}, we know the model $\cY^{\circ} $
obtained by the standard blow up along the image 
$\sigma^\circ=(\tau^\circ,\id)\colon C^{\circ} \to X_{\rm reg}\times C^{\circ}$, 
degenerates to a model $\mu:\cY\to(X\times C,\Delta\times C)$ which provides a flat
family of Koll\'ar component $S$. In particular,  over the special fiber $0$,
we obtain a model $\mu_0:\cY_0\to(X,\Delta)$  which yields a Koll\'ar component $S_0$ over $x$. 
Denote by $\Gamma_0$ the different of $(\mu_0)_*^{-1}\Delta$ on $S_0$.
Since $S_0$ computes $\hvol(x,X,\Delta)$ by Lemma \ref{l-kollardeg}, we have
$$n^n=A_{(X,\Delta)}(S_0) \cdot (-K_{S_0}-\Gamma_0)^{n-1}
=n\cdot (-K_{S_0}-\Gamma_0)^{n-1}.$$
By \cite[Theorem D]{LX16}, $(S_0,\Gamma_0)$ is log K-semistable  with
volume $ n^{n-1}$. If $\Gamma_0\neq 0$, we pick a closed
point $y\in (S_0)_{\rm reg}\cap \Supp(\Gamma_0)$.
It is easy to see that $A_{(S_0,\Gamma_0)}(\ord_y)< n-1$ and 
$\vol_{y,S_0}(\ord_y)=1$, hence \cite[Proposition 4.6]{LL19} implies
$$n^{n-1}=(-K_{S_0}-\Gamma_0)^{n-1}\leq
\left(\frac{n}{n-1}\right)^{n-1}A_{(S_0,\Gamma_0)}(\ord_y)^{n-1}\cdot\vol_{y,S_0}(\ord_y)<n^{n-1}$$
and we get a contradiction. So $\Gamma_0=0$ and hence $(S_0,\Gamma_0)\cong(\mathbb{P}^{n-1},0)$ by
\cite[Theorem 36]{Liu16}. And $-S|_{S_t}$ gives $\mathcal{O}(1)$ fiberwisely, thus $x\in X$ is smooth, as $\cY_0\to X$ induces a degeneration of $x\in X$ 
to $C(\mathbb{P}^{n-1}, \mathcal{O}(1))$ which is smooth. Since
$x\in X$ is smooth and the different $\Gamma_0=0$, we have $x\not\in\Supp(\Delta)$. Thus we finish 
the proof.
\end{proof}

\end{document}

\begin{lem}\label{abelian}
 Let $(R,\fm)$ be the local ring of a closed point on a 
 complex variety.
 Let $G$ be a finite abelian group acting faithfully on $R$.
 Then for any $G$-invariant $\fm$-primary ideal $\fa$ of $R$,
 we have
 \[
  \mult(\fa)\geq \limsup_{m\to\infty}\frac{\mult((\fa^m)^G
  R)}{m^n}.
 \]
 If moreover that $(R,\fm)$ is the local ring of an algebraic 
 klt singularity, then we have
 \[
  \lct(\fa)^n\cdot\mult(\fa)\geq |G|\cdot\inf_{\fb\colon\fm^G
  \textrm{-primary}}
  \lct(\fb)^n\mult(\fb).
 \]
\end{lem}

\begin{proof}
 Let $\Rep(G)$ be the set of (one-dimensional) irreducible representations 
 of $G$. Thus $R$ can be decomposed as 
 $R=\oplus_{\sigma\in\Rep(G)} R_\sigma$, where $R_{\id}=R^G$
 is the ring of invariants. For any $x\in R$, we may decompose
 $x$ as $x=\sum_{\sigma\in\Rep(G)} x_\sigma$. 
 Let us choose $y_\sigma\in R_\sigma\setminus\{0\}$ if it exists,
 otherwise $y_\sigma:=1\in R$. Denote 
 $y:=\prod_{\sigma\in\Rep(G)}y_\sigma$. Since $\fa^m$
 is $G$-invariant, we have $x_\sigma\in\fa^m$ for any 
 $x\in\fa^m$. Then $y_{\sigma^{-1}}x_\sigma$ is invariant under
 $G$, so $y x_\sigma=(\prod_{\tau\neq\sigma^{-1}}y_\tau)
 y_{\sigma^{-1}} x_\sigma \in(\fa^m)^G R$ for any $\sigma$.
 Thus $yx=\sum_{\sigma}y x_\sigma \in(\fa^m)^G R$. 
 In particular, $(y)\fa^m\subset (\fa^m)^G R$. Since $\fa^G R$
 is $\fm$-primary, there exists a positive integer $l$
 such that $\fm^l\subset\fa^G R$. This implies
 $\fm^{ml}\subset (\fa^G)^m R\subset (\fa^m)^G R$.
 Hence $((y)+\fm^{ml})\fa^m\subset (\fa^m)^G R$.
 By Teissier's Minkowski inequality, we have
 \begin{equation}\label{minkowski}
  \mult((\fa^m)^G R)^{1/n} \leq \mult((y)+\fm^{ml})^{1/n}
  +\mult(\fa^m)^{1/n}.
 \end{equation}
 It is easy to see that $\{((y)+\fm^{ml})\}_m$ form a graded
 sequence of $\fm$-primary ideals. By \cite{LM09, Cut13}, we 
 know that 
 \begin{align*}
  \limsup_{m\to\infty}\frac{\mult((y)+\fm^{ml})}{m^n}
  & = \limsup_{m\to\infty} \frac{\ell(R/((y)+\fm^{ml})}
  {m^n/n!}\\
  & = \limsup_{m\to\infty} \frac{\ell(S/\fm_S^{ml})}{m^n/n!}=0,
 \end{align*}
 where $(S,\fm_S):=(R/(y),\fm/(y))$ is a local ring of dimension
 $n-1$.
 Thus the first inequality of the lemma is proved by first dividing
 \eqref{minkowski} by $m$ and then taking $\limsup$.
 
 For the second inequality, notice that $(\fa^m)^G R\subset
 \fa^m$, hence $\lct((\fa^m)^G R)\leq \lct(\fa^m)=\lct(\fa)/m$.
 It follows by multiplying $n$-th power of this inequality with the 
 first inequality of the lemma.
\end{proof}

\subsection{Non $G$-invariant ideals}

Let $\fa$ be an ideal of $R$. We define the \textit{trace}
of $\fa$ (denoted by $\Tr(\fa)$) to be the ideal of $R^G$
generated by $\Tr(x)$ for all $x\in \fa$. It is easy to see that
if $\fa$ is $G$-invariant, then $\Tr(\fa)=\fa^G$.

\begin{lem}
 For any $\fm$-primary ideal $\fa$ and any finite abelian group $G$,
 we have
 \[
  \mult(\fa)\geq\limsup_{m\to\infty} \frac{\mult(\Tr(\fa^m) R)}{m^n}.
 \]
\end{lem}

\begin{proof}
 Let $\Rep(G)$ be the set of (one-dimensional) irreducible representations 
 of $G$. Thus $R$ can be decomposed as 
 $R=\oplus_{\sigma\in\Rep(G)} R_\sigma$, where $R_{\id}=R^G$
 is the ring of invariants. For any $x\in R$, we may decompose
 $x$ as $x=\sum_{\sigma\in\Rep(G)} x_\sigma$. 
 Let us choose $y_\sigma\in R_\sigma\setminus\{0\}$ if it exists,
 otherwise $y_\sigma:=1\in R$. Denote 
 $y:=\prod_{\sigma\in\Rep(G)}y_\sigma$. 
 Let us pick an arbitrary element $x\in\fa^m$.
 It is clear that $xy_{\sigma^{-1}}=x_\sigma y_{\sigma^{-1}}
 +\sum_{\tau\neq\sigma}x_\tau y_{\sigma^{-1}}$,
 hence $\Tr(xy_{\sigma^{-1}})=|G|x_\sigma y_{\sigma^{-1}}$.
 Thus $x_\sigma y_{\sigma^{-1}}\in\Tr(\fa^m)$ since
 $xy_{\sigma^{-1}}\in\fa^m$, which implies $x_\sigma y\in 
 \Tr(\fa^m)R$. As a result, $xy\in \Tr(\fa^m)R$ for any 
 $x\in\fa^m$, which means $y\fa^m\subset \Tr(\fa)^m R$.
 Applying Teissier's Minkowski inequality as in the proof of 
 Lemma \ref{abelian} finishes the proof. 
\end{proof}

\begin{que}
 Can we compare $\lct(\fa)$ with $\lct(\Tr(\fa^m)R)$?
\end{que}
--------------------------------
 The only case left is when $g(E)=y'$ is a point. By Lemma \ref{hvolup}, it suffices to show $\hvol(y',Y')\leq 16$ since $Y'$ is Gorenstein and crepant over $X$. If $(Y',y')$ is a hypersurface singularity, then it is not smooth because $E$ is crepant over $y'$. Hence $\hvol(y',Y')\leq 16$ by Lemma \ref{hypersurf} and we are done. Thus we may assume that $(Y',y')$ is not a hypersurface singularity. By \cite[5.36]{KM98}, the blow up $Y=\Bl_{y'}Y'\to Y'$ is a crepant birational morphism. Since $Y_2$ is terminal, the only crepant exceptional divisor over $y'$ is $E$. Hence $Y_2\dashrightarrow Y$ is an isomorphism in codimension $1$. Besides, both $-E$ and its birational transform in $Y$ is relatively ample over $Y'$. Thus $Y_2$ is isomorphic to $Y$ over $Y'$, and we may us the notation $g:(Y,E)\to (Y', y')$ instead. Since $Y$ is $\bQ$-factorial with isolated cDV singularities, we know that $Y$ is in fact factorial by local Grothendieck-Lefschetz theorem (see \cite{Rob76}). In particular, the divisor $E$ is Cartier in $Y$, so $E$ is Gorenstein.
 
 Let $H$ be a general hyperplane section of $Y'$ through $y'$, then $(H,y)$ is an elliptic singularity by \cite[Theorem 5.35]{KM98}. \textcolor{red}{The birational transform $g_*^{-1} H$ is a general member of the base point free linear system $\mathcal{O}$(1)}. Since $Y$ has isolated singularities, the strict transform $g_*^{-1} H$ is smooth by Bertini's theorem. Moreover, $g_*^{-1}H\to H$ is the minimal resolution according to \cite[Section 4.4]{KM98}. Let $Z$ be the fundamental cycle of $(H,y')$ in $g_*^{-1}H$. Then $\Supp(Z)=E\cap g_*^{-1}H$ is irreducible. Hence $Z$ is reduced by \cite[Lemma 4.49]{KM98}, which implies that $\cO_Y(-E)=\fm_{y'}\cdot\cO_Y$. Let $m:=\edim(Y',y')-1$, then $E\subset\bP^m$ is a non-degenerate reduced irreducible surface of degree $m$. Since $K_Y|_E$ is trivial, $\cO_E(1)\cong \cO_Y(-E)|_{E}\cong \omega_E^{-1}$ by adjunction. Hence $E$ is a Gorenstein reduced del Pezzo surface. It is clear that $\hvol(y',Y')\leq \hvol_{y',Y'}(\ord_E)=(-K_E)^2=m$.

\begin{expl}
Let $(X,x)$ be a Gorenstein canonical threefold singularity of embedding dimension at least $5$, with a nontrivial $G:=\bZ/2$-action. Assume the blow up model $\phi:Y=\Bl_x X\to X$ together with the reduced exceptional divisor $E$ satisfies:
\begin{itemize}
\item $E=E_1+E_2$ is obtained by gluing two copies of $F_{a;1}$ along their line pairs, i.e. type (C$_2$) in \cite{Rei94};
\item $Y$ is smooth at the pinch point $o\in E_1\cap E_2$. 
\end{itemize}
Since both $E_1$ and $E_2$ are of type (d1) in \cite{Rei94}, we label their line pairs by $A_1+B_1$ and $A_2+B_2$. We identify $A_1,B_1$ with $B_2,A_2$ respectively to get $E$. We will compute $\vol_{X,x}(\ord_{o})$. Suppose $f\in\cO_{X,x}$ satisfies $\ord_{o}(f)=k$. Denote $\mathrm{div}(\phi^*f)=c_1 E_1 +c_2 E_2 +D$, where $D$ does not contain $E_1$ or $E_2$ as a component. 

Since $\mathrm{div}(\phi^*f)|_{E_i}$ is linearly trivial, we know that
\begin{align*}
&(c_1 E_1 + c_2 E_2 + D)|_{E_1}\sim 0 \textrm{ and } (c_1 E_1 + c_2 E_2 + D)|_{E_2}\sim 0,
\end{align*}
By the geometric construction we know that 
\begin{align*}
&(E_1+E_2)|_{E_1}\sim -(a+1)A_1-B_1\textrm{ and }(E_1+E_2)|_{E_2}\sim -(a+1)A_2-B_2,\\
&E_2|_{E_1}\sim A_1+B_1\textrm{ and }E_1|_{E_2}\sim A_2+B_2.
\end{align*}
Let $D|_{E_i}=C_i+a_iA_i+b_iB_i$.
Then we get inequalities as follows:
\begin{align*}
0& = ((c_1(E_1+E_2)+(c_2-c_1)E_2+D)|_{E_1}\cdot A_1)\\
& =-2c_1+c_2+b_1+C_1\cdot A_1\\
& \ge -3c_1+(k-a_1)
\end{align*}
where the last inequality follows from that
$$C_1\cdot A_1\ge \mult_{o}C_i\ge \mult_{o}D-a_1-b_1=k-c_1-c_2-a_1-b_1.  $$

Similarly, we have
\begin{align*}
k\le a_2+3c_2
\end{align*}
Thus we get $2k\le 3(c_1+c_2)+(a_1+a_2)$. By the geometry, the pull back of a section of the form $\mathrm{div}(\phi^*f)$ on ${\rm Bl}_oY$ will vanish along the line $$(\mbox{birational transform of }E_1) \cap \mathbb{P}^2=(\mbox{birational transform of }E_2)\cap \mathbb{P}^2$$ of the order  at least $(c_1+c_2)$,  along  
$(\mbox{birational transform of }A_1) \cap \mathbb{P}^2$ of order at least $a_1$ and  along  
$(\mbox{birational transform of }A_2) \cap \mathbb{P}^2$ of order at least $a_2$.

If $a_1+a_2\le k$, then $c_1+c_2\ge \frac{1}{3}k$, then the 
$$\vol_{x}(\mathbb{P}^2)\le \frac 49\vol_{o}(\mathbb{P}^2)=4/9.$$

If $a_1+a_2\ge  k$, then a similar calculation will show that 
$$\vol_{x}(\mathbb{P}^2)\le 1-((\frac{a_1}{k})^2+(\frac{a_2}{k})^2)\le \frac{1}{2}.$$

\end{expl}

\begin{prop}\label{p-strong}
Let $(X,x)$ be a three dimensional klt singularity with Cartier index $p$ being a prime number,  whose index 1 cover $(\tilde{X}, \tilde{x})$ is not smooth.    Then $\hvol(x,X)\le 16/p$.
 \end{prop}
\begin{proof}
Let $ Y\to X$ be the model which extracts all divisors of discrepancy $0$ over $x$. Since $f^*(K_X)=K_Y$, we know that $Y$ is not Gorenstein along ${\rm Ex}(Y/X)$ as $(x\in X)$ is not Gorenstein. So there are two cases: either there exists one point $y\in Y$ which is not Gorenstein and non-quotient or $Y$ only has quotient singularities. 
\bigskip

We first treat the first case and by replacing $x\in X$ by $y\in Y$, we can assume that there is no discrepancy 0 divisor over $x\in X$.  

  Donote by $\sigma$ a generator of the group $G=\Aut(\tilde{X}/X)$. 
Then on the $G$-equivariant maximal crepant model $\tilde{\phi}_1\colon\tilde{Y}_1\to \tilde{X}$, for any prime divisor $E\subset {\rm Ex}(\tilde{Y}_1/\tilde{X})$ over $\tilde{x}$, then the map induced by $\mu$ has ramification index larger than 1, since otherwise its quotient will yield a divisor whose discrepancy is also 0 (see the proof of \cite[5.20]{KM98}). This implies that $\sigma$ acts on $E$ trivially. 

If no such $E$ exists, i.e., $\tilde{x}\in \tilde{X}$ is a cDV point, then by Lemma \ref{hypersurf}
$$\hvol^G(\tilde{x},\tilde{X})\le 16. $$
Therefore, in the remaining, we assume such $E$ exists. 
 Let us run the $G$-equivariant $(\tilde{Y}_1,\epsilon E)$-MMP over $\tilde{X}$ for $0<\epsilon\ll 1$. The points in the birational transform of $E$ on every model are fixed by $\sigma$. This MMP will terminate as $\tilde{Y}_1\dashrightarrow \tilde{Y}\to Y'$, where $\tilde{Y}_1\dashrightarrow \tilde{Y}$ is the composition of a sequence of flips and $g: \tilde{Y}\to Y'$ contracts the birational transform of $E$, which we also denote by $E$ as abuse of notation.  If $\dim g(E)=1$, then $Y'$ has non-isolated cDV singularities along $g(E)$. Since any point on $ g(E)$ is a $G$-invariant point, Lemma \ref{hypersurf} and \ref{hvolup} implies that $\hvol^G(\tilde{x},\tilde{X})\leq \hvol^G(y',Y')\leq 16$ and we are done.
 
 Thus the only case left is when $E$ gets contracted by $g$ to a $G$-invariant point $y'\in Y'$.  Since $Y'$ is Gorenstein and crepant over $\tilde{X}$, Lemma \ref{hvolup} implies that $\hvol^G(\tilde{x},\tilde{X})\leq \hvol^G(y',Y')$. Therefore, we can replace $(\tilde{x},\tilde{X})$ by $(y',Y')$ and $\tilde{Y}_1$ by $\tilde{Y}$ and it suffices to prove that $\hvol^G(y',Y')\leq 16$.
Furthermore, if $\tilde{Y}$ has a singular point $\tilde{y}$ along $E$, then we have  $\hvol^G(y',Y')\le \hvol^G(\tilde{y},\tilde{Y})\leq 16$, thus we can assume $\tilde{Y}$ is smooth along $E$. Hence in the proof of Lemma \ref{l-weak}, when we consider the case contracting a divisor to a point, all valuations we produced are indeed $G$-invariant. Therefore, we know $\hvol^G(y',Y')\le 16$.

\bigskip

Now we treat the second case where $Y$ only has quotient singularities along ${\rm Ex}(Y/X)$. 
\end{proof}

\begin{rem} In the above argument, only in the second case of $E$ reducible, we use the assumption that $(X,x)/G$ is non-Gorenstein. All other estimates apply to any $\mathbb{Z}/2$-action. 
\end{rem}
\bigskip
Finally we prove Theorem \ref{t-local}.5.
In fact, when the quotient is Gorenstein canonical singularity, the strategy of Proposition \ref{p-strong} for the canonical class can be generalized to any torision class by a similar argument. 

 \begin{proof}[Proof of Theorem \ref{t-local}.5] Let $L$ be a Weil divisor on $X$ whose class is of order precisely $p$. Denote by $\mu\colon Y\to X$ to be the maximal crepant model, then $\mu^*L$ is $\mathbb{Q}$-Cartier and $p$ is the minimal  positive integer such that $p(\mu^*L)$ is Carter. Since the torsion class group of $Y$ is trivial, we know that $\mu^*L$ is not a integral Weil divisor, as any $\mathbb{Q}$-Cartier integral Weil on $Y$ is Cartier.  Let $\phi\colon Y_1\to X$ be the model which precisely extracts all divisors in $\Ex(Y/X)$ with integer coefficient in $\mu^*L$, then we know that $\phi^*L$ is an integral Weil divisor, whose class in the class group has the order $p$. So we can replace $x\in X$ by a point  $y_1\in Y_1$ such that $\phi^*L$ is not Cartier at $y_1$, and assume  that $\mu^*L$ has non-integer coefficient along any component in ${\rm Ex}(Y/X)$.
 
 Take the index 1 cover $f\colon \tilde{X}\to X$ for $L$. And we denote by $G:=\mathbb{Z}/p$ with a generator $\sigma$.  By our assumption $\tilde{X}$ is not smooth. So it is a Gorenstein canonical singularity. 
 If $(\tilde{x}\in \tilde{X})$ is cDV, then
  $$\hvol(x,X)=\hvol^{G}(\tilde{x},\tilde{X})/p\le 16/p. $$
  
   we make the construction of the canonical class $K_X$. Then let $x_1\in X_1$ be its $\mathbb{Z}/p$ quotient,  we have 
$$(\tilde{x}\in \tilde{X})\to (x_1\in X_1)\to (x\in X).$$
From Proposition \ref{p-strong}, we know that 
$$\hvol(x, X)\leq\hvol(x_1,X_1)\leq 16/p.$$ 

Similarly we can prove Theorem \ref{t-local}.5 in the same way using Proposition \ref{p-strong2}.

Otherwise, the $G$-equivariant maximal crepant model $\tilde{Y}$ over $\tilde{X}$ will have a divisor $E$ over $\tilde{x}$. Since $f^*L$ is Cartier, its coefficient along $E$ will be an integer $m$. If the action of $\sigma$ on $E$ is nontrivial, i.e., either  $\sigma$ sends $E$ to a different divisor, or $\sigma|_E$ is nontrivial, we know that the quotient will yield a divisor $F$ in ${\rm Ex}(Y/X)$ with coefficient $m$ in $\mu^*L$. This is contradictory to our assumption, so that $\sigma$ acts on $E$ trivially. Then we can mimic the proof of the last part of Proposition \ref{p-strong} to conclude that  $$\hvol(x,X)\le 16/p. $$
 \end{proof}

\begin{prop}\label{p-strong2}
 Let $(X,x)$ be a Gorenstein canonical threefold singularity whose general hyperplane section is an elliptic singularity, with a nontrivial $G:=\mathbb{Z}/2$-action which does not fix any divisor and $(X,x)/G$ is Gorenstein but with a 2-torsion class $L$ in the class group, then $\hvol^G(x,X)\le 18$.
 \end{prop}
\begin{proof}We can apply a similar argument of Proposition \ref{p-strong}, where we only need the non-Gorenstein quotient assumption at the end. So we only need to treat following case if $\mu^*\colon Y\to X$ contracts a pair of involuted $E=E_1+E_2$, where
\begin{itemize}
 \item $E$ is obtained by gluing two copies of $F_{a;0}$ along their line pairs/double lines;
 \item $E$ is obtained by gluing two copies of $F_{a;1}$ along their line pairs;
 \item $E$ is obtained by gluing two copies of $F_{a;2}$ along their negative conics.
\end{itemize}
And in fact, we only need to treat the second case, where we know that the action on the intersection of the pair of lines is locally given by $\frac{1}{2}(1,1,0).$
Furthermore, the pull back of $L$ under $(Y_1,y_1)=(Y,y)/ G\to (X,x)/ G$ is a Weil divisor locally generated the class group. 
However, if we denote by $L'$ the pull back of $L$ on $X$, and write $\mu^*(L')=L_1+aE_1+aE_2$ where $L_1$ is the birational transform of $L'$, then we know $a$ is an integer. By our assumption $L'/\sigma$ should be the local generator of the class group at ${\rm Pic}(y_1\in Y_1)$. 
This implies that the pull back $(L'\cdot l_1)_{y_1}$ is an odd number, where $l_1$ is a the glued curve in $E_1$ . However, 
$$(L'\cdot l_1)= -a(E_1+E_2) \cdot l_1= -2a,$$
and $L'$ is Cartier on $Y$ with no fixed point along $l_1$ except $y'$. This implies that
$(L'\cdot l_1)_p$ is even, which is a contradiction.  
\end{proof}